\numberwithin{equation}{section}
\newtheorem{theorem}{Theorem}[section]
\newtheorem{lemma}[theorem]{Lemma}
\newtheorem{proposition}[theorem]{Proposition}
\newtheorem{problem}[theorem]{Problem}
\theoremstyle{definition}
\newtheorem{definition}[theorem]{Definition}
\theoremstyle{remark}
\newtheorem{remark}[theorem]{Remark}
\begin{document}
\title{Supersonic  flows with a contact discontinuity  to the two-dimensional steady rotating Euler system }
\author{Shangkun Weng\thanks{School of mathematics and statistics, Wuhan University, Wuhan, Hubei Province, 430072, People's Republic of China. Email: skweng@whu.edu.cn}\and Zihao Zhang\thanks{School of mathematics and statistics, Wuhan University, Wuhan, Hubei Province, 430072, People's Republic of China. Email: zhangzihao@whu.edu.cn}}
\date{}
\maketitle
\newcommand{\de}{{\mathrm{d}}}
\def\div{{\rm div\,}}
\def\curl{{\rm curl\,}}
\newcommand{\ro}{{\rm rot}}
\newcommand{\sr}{{\rm supp}}
\newcommand{\sa}{{\rm sup}}
\newcommand{\va}{{\varphi}}
\newcommand{\me}{\mathcal{M}}
\newcommand{\ml}{\mathcal{V}}
\newcommand{\mi}{\mathcal{N}}
\newcommand{\md}{\mathcal{D}}
\newcommand{\mg}{\mathcal{G}}
\newcommand{\mh}{\mathcal{H}}
\newcommand{\mf}{\mathcal{F}}
\newcommand{\ms}{\mathcal{S}}
\newcommand{\mt}{\mathcal{T}}
\newcommand{\mn}{\mathcal{N}}
\newcommand{\mb}{\mathcal{P}}
\newcommand{\mm}{\mathcal{B}}
\newcommand{\mj}{\mathcal{J}}
\newcommand{\mk}{\mathcal{K}}
\newcommand{\my}{\mathcal{U}}
\newcommand{\mw}{\mathcal{W}}
\newcommand{\mq}{\mathcal{Q}}
\newcommand{\ma}{\mathcal{L}}
\newcommand{\mc}{\mathcal{C}}
\newcommand{\mz}{\bm{ \mathscr{E}}}
\newcommand{\n}{\nabla}
\newcommand{\e}{\bar}
\newcommand{\w}{\tilde}
\newcommand{\m}{\omega}
\newcommand{\Z}{\bar{\bm  Z}}
\newcommand{\h}{\bar{\bm \mathcal{H}}}
%\begin{abstract}
 \newcommand{\q}{{\rm R}}
 \newcommand{\ov}{{\overline}}
%\end{abstract}
\newcommand{\p}{{\partial}}
\newcommand{\x}{{\varepsilon}}
\renewcommand\figurename{\scriptsize Fig}
\begin{abstract}
This paper concerns the structural stability of supersonic flows with a  contact discontinuity in a finitely long curved   nozzle for the two-dimensional steady compressible rotating  Euler system. Concerning  the effect of Coriolis force, we first establish the existence of supersonic shear flows with a contact discontinuity in the flat nozzle. Then we consider the stability   of these background supersonic shear flows  with a contact discontinuity when  the incoming supersonic  flow  and the upper and lower nozzle walls are  suitably perturbed.  The problem can be formulated as an initial boundary value problem with a contact discontinuity as a  free boundary.  To deal with the free
boundary value problem,  the Lagrangian transformation is introduced to straighten and fix the contact discontinuity. The rotating Euler system is reduced  to a  first order hyperbolic  system for the Riemann invariants.  We design an iteration  scheme and derive  some  estimates for the solution to the hyperbolic  system. Finally, by using the inverse  Lagrangian transformation, we prove the original free boundary problem admits two layers of smooth supersonic flows separated by a smooth contact discontinuity.
\end{abstract}
\begin{center}
\begin{minipage}{5.5in}
Mathematics Subject Classifications 2010: 35L03, 35L65, 76J20, 76N15.\\
Key words:   contact discontinuity,  free boundary, steady rotating Euler system, supersonic shear flows, Lagrangian transformation, characteristic.
\end{minipage}
\end{center}
\section{Introduction }\noindent
\par  In this paper, we are concerned with the structural stability of  supersonic flows with a contact discontinuity governed by the two-dimensional steady  rotating Euler system in a finitely long curved  nozzle. The  two-dimensional steady  rotating Euler system for compressible  inviscid gas  is of the  following form:
\begin{align}\label{1-1}
\begin{cases}
\p_{x_1}(\rho u_1)+\p_{x_2}(\rho u_2)=0,\\
\p_{x_1}(\rho u_1^2)+\p_{x_2}(\rho u_1 u_2)+\p_{x_1} P-\rho u_2=0,\\
\p_{x_1}(\rho u_1u_2)+\p_{x_2}(\rho u_2^2)+\p_{x_2} P+\rho u_1=0,\\
\p_{x_1}(\rho u_1 E+u_1 P)+\p_{x_2}(\rho u_2 E+u_2 P)=0,\\
\end{cases}
\end{align}
where $\bm u=(u_1,u_2)$ is the velocity, $\rho$ is the density, $P$ is the pressure, $E$ is the energy, respectively.  The term $ \rho \bm u^\bot=(-\rho u_2,\rho u_1)^T $ on the second and third equations in \eqref{1-1}
corresponds to the Coriolis force arising from the Earth's rotation. For polytropic gas, the equation of state and the energy are of the form
\begin{equation}\label{1-2}
P= A(S)\rho^{\gamma}, \quad{\rm {and}}\quad E=\frac12|{\bm{u}}|^2+\frac{ P}{(\gamma-1)\rho},
\end{equation}
where  $A(S)= R e^{S}$ and $\gamma\in (1,+\infty)$, $R$ are positive constants.  Denote the Bernoulli's function and the local sonic speed by $B=\frac12|{\bm{u}}|^2+\frac{\gamma P}{(\gamma-1)\rho}$  and $ c(\rho,A)=\sqrt{A\gamma} \rho^{\frac{\gamma-1}{2}}$, respectively. Then  the system \eqref{1-1} is hyperbolic for supersonic flows ($ |\textbf{u}|>c(\rho,A) $), and hyperbolic-elliptic coupled for subsonic flows ($ |\textbf{u}|<c(\rho,A) $).
\par It is well-known that the study of gas flows in nozzles plays a fundamental role in
the operation of turbines, wind tunnels and rockets. In  the book \cite{CF48},  Courant and Friedrichs
first gave a systematic analysis  via nonlinear partial differential
equations for various types of steady inviscid flows in nozzles. The steady inviscid
flow involving  a contact discontinuity is  one of the typical flow patterns.
  Up to now, there have been many
interesting results on steady flows involving  a contact discontinuity in  nozzles. For the subsonic flow in infinite  nozzles,  the stability  of subsonic  contact discontinuity  was established in  \cite{BM09} and \cite{ BP19,PB19} with a Helmholtz decomposition.    The global existence and uniqueness  of the subsonic contact discontinuity with large vorticity  were obtained  in \cite{CHWX19} by the theory of compensated compactness, which is not a perturbation around piecewise constant solutions. For the flow in finitely nozzles, the stability  of subsonic  contact discontinuity has been analyzed in \cite{WZ23} by using the implicit function
theorem.  This idea is motivated by the discussion of the airfoil problem in \cite{CXZ22}. In \cite{CXZ22}, the vortex line attached to the trailing edge  is  a free boundary,   the authors employed the implicit function theorem to
solve this problem.   The stability of supersonic  and transonic flat contact discontinuity for 2-D steady Euler flows  was established in \cite{HFWX19,HFWX21}. The authors in \cite{HFWX19,HFWX21}   employed the Lagrangian transformation and   the  Riemann invariants to reduce the Euler system in the Lagrangian
coordinates to a diagonal form and then design the iteration scheme to derive the estimates for the approximate solutions.    For other related problems which involve the  stability of contact discontinuity, one can see  \cite{ CYK13,CKY13,WY15,WY13,WF15} and the references therein.
\par In this paper, we consider the structural stability of supersonic rotating Euler flows with a  contact discontinuity  in a two-dimensional  finitely long  nozzle under  small perturbations on  the incoming supersonic flow and the upper and lower walls of the nozzle. The rotating Euler system can be regarded as the classical compressible Euler system with the Coriolis force. The influence and importance of rotational effects in fluids has been documented in various contexts, in particular in the geophysical fluids literature
 \cite{GS07,PJ87} or  for the $ \beta $-plane model \cite{EW17,PW18}. The existence of subsonic  flows for the steady rotating Euler system in an infinitely long axisymmetric nozzle was obtained in \cite{ZZH22}. However, there are no works on steady flows with a contact discontinuity for the rotating Euler system. Thus we first investigate the structural stability of supersonic rotating Euler flows with a contact discontinuity.  The idea used here is the characteristic method, which is a powerful tool  in dealing the boundary value problem for the first order quasilinear hyperbolic system.   Different from the previous work  in \cite{HFWX19,HFWX21},
   the background solution is not the constant solution but depends on the vertical variable.  Due to the effect of the Coriolis force, the Riemann invariants are not conserved along the characteristic any more. However, one still can derive the integral representation formula  for the   Riemann invariants. Then further careful analysis can be carried out to  obtain the existence of the solution for  the linearized  hyperbolic  system and   establish the $ C^{1,\alpha} $ estimates  of the solution    by  the characteristics method for all cases depending on the entrance or on the reflection of the characteristic curves by the upper and lower walls or on the contact discontinuity.
\par This paper will be arranged as follows. In Section 2, we  formulate the problem of supersonic rotating Euler
flows with a contact discontinuity in a two-dimensional finitely long curved nozzle and state
the main result. In Section 3, we first  reduce the free boundary value problem into the fixed boundary value problem  by the Lagrangian transformation, then  the Riemann invariants can be employed to diagonalize the rotating Euler system in the  Lagrangian coordinates. Finally, we design an iteration scheme by linearizing the nonlinear fixed boundary value problem near the background solution and  establish the estimate of the eigenvalues  for  the linearized hyperbolic system. In Section 4, we  establish various $ C^{1,\alpha} $ estimates for the  solution to the linear boundary value problem by using the
characteristics method carefully for all cases depending on the entrance or  on the reflection of the characteristic curves by the upper and lower walls or on the contact discontinuity. In Section 5, we construct a map  based on the solution of the linearized problem and use Schauder fixed point theorem to show the existence of a fixed point to the map. Finally we prove the uniqueness of the solution.
\section{ Formulation of the nonlinear problem}\noindent
 \par In this section, we first  give the definition of steady flows  with  a contact discontinuity. Then  we construct a special class of supersonic shear  flows with a straight contact discontinuity in a finitely long flat nozzle. Finally, we give a detailed formulation  of the stability problem for these supersonic shear  flows with a contact discontinuity under the suitable perturbations of boundary conditions and state the main result.

 \subsection{The definition of steady flows  with  a contact discontinuity}\noindent
 \par    Let $ \Omega $ be an open and connected domain. Suppose that a non-self-intersecting $ C^1$-curve  $ \Gamma$ divides $ \Omega $ into two disjoint open subsets $ \Omega^\pm$  such that $ \Omega= \Omega^- \cup\Gamma \cup \Omega^+ $.
Assume that $ \bm U = (\rho, u_1, u_2,P)$ satisfies the following properties:
\begin{enumerate}[(1)]
\item $\bm U \in[L^\infty(\Omega)\cap C^{1}_{loc}(\Omega^\pm)\cap C^0_{loc}(\Omega^\pm\cup \Gamma)]^4$;
\item For any $\eta\in C_0^{\infty}(\Omega)$,
\begin{equation}\label{1-3}
\begin{cases}
\int_{\Omega}(\rho u_1\p_{x_1}\eta+\rho u_2\p_{x_2}\eta)\de \mathbf{x}=0,\\
\int_{\Omega}((\rho u_1^2+P)\p_{x_1}\eta+\rho u_1u_2\p_{x_2}\eta+\rho u_2\eta)\de \mathbf{x}=0,\\
\int_{\Omega}(\rho u_1u_2\p_{x_1}\eta+(\rho u_2^2+P)\p_{x_2}\eta-\rho u_1\eta)\de \mathbf{x}=0,\\
\int_{\Omega}(\rho u_1(E+\frac{P}{\rho})\p_{x_1}\eta+\rho u_2(E+\frac{P}{\rho})\p_{x_2}\eta)\de \mathbf{x}=0.\\
\end{cases}
\end{equation}
\end{enumerate}
\par By integration by parts, we get the Rankine-Hugoniot conditions:
\begin{equation}\label{1-4}
\begin{cases}
n_1[\rho u_1]+n_2[\rho u_2]=0,\\
n_1[\rho u_1^2]+n_1[P]+n_2[\rho u_1u_2]=0,\\
n_1[\rho u_1u_2]+n_2[\rho u_2^2]+n_2[P]=0,\\
n_1[ \rho u_1(E+\frac{P}{\rho})]+n_2[\rho u_2( E+\frac{P}{\rho})]=0.\\
\end{cases}
\end{equation}
 Here $ \mathbf{n} = (n_1, n_2) $  is the unit normal vector to $\Gamma$, and $[F](\mathbf{x}) = F_+(\mathbf{x})- F_-(\mathbf{x})$
denotes the jump across the  curve $\Gamma$ for a piecewise smooth function $ F $.
 Let $ \bm \tau= (\tau_{1},\tau_{2})$ as the unit tangential vector to $\Gamma$, which means that $ \mathbf{n}\cdot{\bm \tau}= 0$. Taking the dot product of $(\eqref{1-4}_2,\eqref{1-4}_3)$ with $ \mathbf{n}$  and $ \bm \tau $ respectively, one has
\begin{equation}\label{1-5}
\rho(\mathbf{u}\cdot\mathbf{n})[\mathbf{u}\cdot \bm \tau]_{\Gamma}= 0, \quad
[\rho(\mathbf{u}\cdot\mathbf{n})^2+P]_{\Gamma}= 0.
\end{equation}
 Assume that $ \rho> 0 $  in $ \bar \Omega $, \eqref{1-5} is divided into  two subcases:
 \begin{itemize}
 \item  $ \mathbf{u}\cdot\mathbf{n} \neq 0 $ and $[\mathbf{u}\cdot \bm \tau]_{\Gamma}=0 $. In this case,  the curve $ \Gamma $ is a shock;
  \item    $ \mathbf{u}\cdot\mathbf{n}=0 $ and $ [P]=0 $ on $\Gamma$. In this case, the curve $ \Gamma $ is  a contact discontinuity.
 \end{itemize}

 \begin{definition}
We define $ \bm U = (\rho, u_1, u_2,P)$  to be a weak solution of the rotating Euler system \eqref{1-1} in $ \Omega $ with a contact discontinuity $\Gamma$ if the the following properties hold:
\begin{enumerate}[(i)]
\item $\Gamma$ is a non-self-intersecting $ C^1$-curve  dividing $\Omega $ into two disjoint open subsets $ \Omega^\pm$  such that $ \Omega = \Omega^- \cup\Gamma \cup \Omega^+ $;
\item $ \bm U $ satisfies $\rm{(1)}$ and $ \rm{(2)} $;
\item $ \rho>0 $ in $ \bar \Omega$;
 \item $(\mathbf{u}|_{\bar \Omega_-\cap\Gamma}-\mathbf{u}|_{\bar \Omega_+\cap\Gamma })\neq \mathbf{0} $
 holds for all $ \mathbf{x}\in\Gamma$;
 \item $ \mathbf{u}\cdot\mathbf{n}=0 $ and $ [P]=0 $ on $\Gamma$.
 \end{enumerate}
 \end{definition}
 \subsection{The background solutions}\noindent
\par In this subsection,  we construct  a special class of supersonic shear flows with a straight contact discontinuity in a finitely long flat nozzle. The  flat nozzle  of the length $ L $ is given by
\begin{equation*}
\Omega_b=\{(x_1,x_2):0<x_1<L,\ -1<x_2<1\}.
\end{equation*}
Let the line $ x_2=0 $ as the background contact discontinuity curve, which divided the domain into two regions:
\begin{equation*}
\Omega_b^{-}=\Omega_b\cap\{ -1<x_2<0\}, \quad \Omega_b^+=\Omega_b\cap\{ 0<x_2<1\}.
\end{equation*}
 \par Concerning  the effect of the Coriolis force,
  we consider two layers  of steady smooth   shear flows.
   First,
  the velocity  of the  bottom and top layers are given by
  \begin{equation*}
  \bm u_b(x_2)=
  \begin{cases}
    \bm u_b^{-}(x_2):=(u_b^{-}(x_2),0),
     \quad {\rm{in}}\quad  \Omega_b^{-},\\
     \bm u_b^{+}(x_2):=(u_b^{+}(x_2),0),
     \quad {\rm{in}}\quad  \Omega_b^+,\\
      \end{cases}
      \end{equation*}
  where $ u_b^{\pm}(x_2)>0 $.
      Next, for given $A_b^{\pm}>0 $ and  $  A_b^{+}\neq A_b^{-} $ and $\bar P>0 $, the pressure of the bottom and  top layers are defined in the following:
     \begin{equation*}
  P_b(x_2)=
  \begin{cases}
    P_b^-(x_2)=A_b^{-}(\rho_b^{-})^{\gamma} (x_2),
     \quad {\rm{in}}\quad  \Omega_b^{-},\\
     P_b^{+}(x_2)=A_b^{+}(\rho_b^{+})^{\gamma} (x_2),
     \quad {\rm{in}}\quad  \Omega_b^+,\\
      \end{cases}
      \end{equation*}
      with
     \begin{equation*}
        P_b^{-}(0)=P_b^{+}(0)=\bar P>0.
     \end{equation*}
         Then it follows from the third equation in \eqref{1-1} that
        \begin{equation*}
        (P_b^\pm)^{\prime}(x_2)=\gamma A_b^{\pm}
        (\rho_b^{\pm})^{\gamma-1}(\rho_b^{\pm})^\prime(x_2)=
 -(\rho_b^{\pm}u_{b}^{\pm})(x_2).
\end{equation*}
Therefore
\begin{equation}\label{2-1}
    \begin{cases}
    \rho_b^{-}(x_2)
    =\left[\left(\frac{ \bar P}{ A_b^{-}}\right)^{\frac{\gamma-1}{\gamma}}-\frac{\gamma-1}
    {  A_b^{-}\gamma}
    \int_{0}^{x_2}u_{b}^{-}(s)\de s
    \right]^{\frac{1}{\gamma-1}},
     \quad {\rm{in}}\quad  \Omega_b^{-},\\
     \rho_b^{+}(x_2)=\left[\left(\frac{ \bar P}{  A_b^{+}}\right)^{\frac{\gamma-1}{\gamma}}
     -\frac{\gamma-1}
     {  A_b^{+}\gamma}
     \int_{0}^{x_2}u_{b}^{+}(s)\de s\right]^{\frac{1}{\gamma-1}},
     \quad {\rm{in}}\quad  \Omega_b^{+}.\\
      \end{cases}
      \end{equation}
 The functions $ u_{b}^\pm $ and constants $ A_b^\pm $ and $ \bar P $ are chosen such that the shear  flows in the top and bottom layers are supersonic, i.e.,
 \begin{equation*}
 0<\gamma(A_b^{\pm})^{\frac{1}{\gamma}}\bar P^{\frac{\gamma-1}{\gamma}}
 -(\gamma-1)
    \int_{0}^{x_2}u_{b}^{\pm}(s)\de s<(u_{b}^\pm)^2.
\end{equation*}
 Then
\begin{equation}\label{2-2}
 \bm{U}_b(x_2)=
  \begin{cases}
   \bm{U}_b^{-}(x_2):
   =(\rho_b^{-}, u_{b}^{-},0,P_b^{-})(x_2),  \quad {\rm{in}}\quad  \Omega_b^{-},\\
  \bm{U}_b^{+}(x_2):
  =(\rho_b^{+}, u_{b}^{+},0,P_b^{+})(x_2), \quad {\rm{in}}\quad  \Omega_b^{+},\\
  \end{cases}
 \end{equation}
  with a contact discontinuity $ x_2=0 $ satisfy the rotating Euler system \eqref{1-1} in the sense of Definition {\rm{2.1}},
  which will be called the background solutions in this paper. This paper is going to establish the structural
stability of these background  solutions under   the perturbations of suitable boundary conditions on the entrance  and the upper and lower nozzle walls.
 \subsection{The stability problem and the main result}\noindent
\par  The two-dimensional finitely long curved  nozzle $ \Omega $  is  described by
\begin{equation}\label{2-3}
\Omega=\{(x_1,x_2):0<x_1<L, g^-(x_1)<x_2<g^+(x_1)\},
\end{equation}
  with
  \begin{equation*}
  g^\pm(x_1)\in C^{2,\alpha}([0,L]) \quad {\rm{and}}\quad -1\leq g^-(x_1)<g^+(x_1)\leq1.
  \end{equation*}
  The  upper and lower  boundaries of the nozzle are denoted by $ \Gamma_w^+ $ and $ \Gamma_w^- $, i.e;
\begin{equation}\label{2-4}
\Gamma_w^\pm=\{(x_1,x_2):  0<x_1<L,\ x_2=g^\pm(x_1)\}.
\end{equation}
The entrance of the nozzle  is separated into two parts:
\begin{equation}\label{2-5}
\begin{aligned}
\Gamma_{0}^+=\{(x_1,x_2):   x_1=0,\ 0<x_2<g^+(0)\},   \quad \Gamma_{0}^-=\{(x_1,x_2):  x_1=0,\ g^-(0)<x_2<0\}.
\end{aligned}
\end{equation}
 The incoming flow  $ \bm{U}_{0} $ at the entrance is given by
\begin{equation}\label{2-6}
\bm{U}_{0}(x_2)=
\begin{cases}
 \bm {U}_{0}^{+}(x_2):=(\rho_{0}^{+},u_{10}^{+},
u_{20}^{+},P_{0}^{+})(x_2),  \quad {\rm{on}}\quad \Gamma_{0}^{+},\\
\bm{U}_{0}^{-}(x_2):=(\rho_{0}^{-},u_{10}^{-},
u_{20}^{-},P_{0}^{-})(x_2),  \quad {\rm{on}}\quad \Gamma_{0}^{-},\\
  \end{cases}
 \end{equation}
with
 \begin{equation}\label{2-7}
 \frac{u_{20}^{+}}{u_{10}^{+}}(0)=\frac{u_{20}^{-}}{u_{10}^{-}}(0), \quad P_{0}^{+}(0)=P_{0}^{-}(0).
\end{equation}
Here $ \bm{U}_{0} $ has a jump at $ x_2=0 $ and is close to $ \bm{U}_b $ in some sense that will be clarified later.
\par  We expect the flow in the nozzle  will be separated by a  contact discontinuity   $ \Gamma=\{x_2=g_{cd}(x_1),0<x_1<L\} $ with $ g_{cd}(0)=0 $, and we denote
\begin{equation*}
\Omega^+=\Omega\cap\{g_{cd}(x_1)<x_2<g^+(x_1)\}, \ \Omega^-=\Omega\cap\{ g^-(x_1)<x_2<g_{cd}(x_1)\}.
\end{equation*}
 Let
\begin{equation}\label{2-8}
{\bm {U}}(x_1,x_2)=
  \begin{cases}
   {\bm {U}}^+(x_1,x_2):=(\rho^+,u_1^+,u_{2}^+,P^+)(x_1,x_2)\quad {\rm{in}}\quad   \Omega^+,\\
   {\bm {U}}^-(x_1,x_2):=(\rho^-,u_1^-,u_{2}^-,P^-)(x_1,x_2)  \quad {\rm{in}}\quad  \Omega^-.\\
  \end{cases}
 \end{equation}
  Along the contact discontinuity $ x_2=g_{cd}(x_1) $, the following Rankine-Hugoniot conditions hold:
\begin{equation}\label{2-9}
\frac{u_2^+}{u_1^+}=\frac{u_2^-}{u_1^-}=g_{cd}^\prime(x_1), \quad P^+=P^-, \quad {\rm{on}}\quad \Gamma.
\end{equation}
On the nozzle walls $ \Gamma_w^+$  and $ \Gamma_w^- $, the usual slip boundary condition is imposed:
\begin{equation}\label{2-10}
  \frac{u_2^+}{u_1^+}=(g^+)^\prime(x_1), \ {\rm{on}}\ \Gamma_w^+, \quad
  \frac{u_2^-}{u_1^-}=(g^-)^\prime(x_1), \ {\rm{on}}\ \Gamma_w^-.
\end{equation}
\par Under the above setting, we will study the following problem:
\begin{problem}\label{problem}
  Given   supersonic incoming flows  $ \bm{U}_{0}^\pm(x_2) $ at the entrance satisfying \eqref{2-7}, find a unique piecewise smooth supersonic solution $ (\bm{U}^+ ,\bm{U}^-) $  defined on $ \Omega^+ $ and $ \Omega^- $ respectively,  with the contact discontinuity $ \Gamma: x_2=g_{cd}(x_1) $ satisfying the rotating Euler system \eqref{1-1} in the sense of Definition {\rm{2.1}} and  the Rankine-Hugoniot conditions in \eqref{2-9} and the slip boundary conditions in \eqref{2-10}.
 \end{problem}
 \par The main theorem of this paper can be stated as follows.
  \begin{theorem}
   Given functions $\bm U_0^\pm(x_2) $ at the entrance satisfying \eqref{2-7}, we
define
 \begin{equation}\label{2-11}
   \begin{aligned}
   \sigma(\bm U_{0}^+,\bm U_{0}^-,g^+,g^-):&=
   \|\bm U_{0}^+ -\bm U_b^+\|_{1,\alpha;[0,1]}+\|\bm U_{0}^- -\bm  U_b^-\|_{1,\alpha;[-1,0]} \\
   &\quad+\| g^+ - 1\|_{2,\alpha;[0,L]}+
   \| g^- + 1\|_{2,\alpha;[0,L]}.
   \end{aligned}
   \end{equation}
   There exist positive constants $\sigma_1 $ and $ \mc_1 $ depending only on  $ (\bm U_b^+,\bm U_b^-,L,\alpha) $ such that  if
   \begin{equation}\label{2-12}
   \begin{aligned}
   \sigma(\bm U_{0}^+,\bm U_{0}^-,g^+,g^-)\leq \sigma_1,
   \end{aligned}
   \end{equation}
   $\mathbf{Problem \ 2.2}$ has a unique piecewise smooth supersonic flow  $ (\bm{U}^+,\bm{U}^-) $   with the contact discontinuity $ \Gamma: x_2=g_{cd}(x_1) $ satisfying the following properties:
 \begin{enumerate}[\rm(i)]
\item The piecewise smooth supersonic flow  $ (\bm{U}^+,\bm{U}^-)\in ( C^{1,\alpha}(\overline{\Omega^+}))\times (C^{1,\alpha}(\overline{\Omega^-}))$  satisfies the following estimate:
 \begin{equation}\label{2-13}
\|\bm{U}^+ -\bm{U}_b^+\|_{C^{1,\alpha}(\overline{\Omega^+})}+\|\bm{U}^- -\bm{U}_b^-\|_{C^{1,\alpha}(\overline{\Omega^-})}\leq \mc_1\sigma(\bm U_{0}^+,\bm U_{0}^-,g^+,g^-).
\end{equation}
\item The contact discontinuity curve $ g_{cd}(x_1)\in C^{2,\alpha}([0,L])  $ satisfies
    $g_{cd}(0)=0 $. Furthermore, it holds that
\begin{equation}\label{2-14}
 \|g_{cd} \|_{2,\alpha;[0,L]}\leq \mc_1\sigma(\bm U_{0}^+,\bm U_{0}^-,g^+,g^-).
 \end{equation}
 \end{enumerate}
 \end{theorem}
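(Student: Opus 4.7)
The plan is to execute, in rigorous form, the strategy sketched in the introduction: straighten the unknown contact discontinuity by a Lagrangian transformation, diagonalize the resulting quasilinear hyperbolic system with Riemann invariants, solve the associated linearized problem by a careful characteristics analysis, and close the argument with the Schauder fixed point theorem.

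First I would introduce Lagrangian coordinates $(y_1,y_2)$ by letting $y_1=x_1$ and taking $y_2$ to be the normalized streamfunction on each side of the contact. Because $\mathbf{u}\cdot\mathbf{n}=0$ holds both on $\Gamma_w^\pm$ and on $\Gamma$ (condition (v) of Definition 2.1), the unknown curve $\Gamma$ and the known walls $\Gamma_w^\pm$ are mapped respectively to the fixed lines $y_2=0$ and $y_2=\pm 1$, turning Problem 2.2 into a fixed boundary value problem on a domain close to $\Omega_b^\pm$. On each side the pulled-back rotating Euler system is quasilinear hyperbolic; rewriting it in Riemann invariants diagonalizes the two genuinely nonlinear characteristic fields, while the entropy $A$ and the Bernoulli function $B$ are transported along $y_1$-lines. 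The Coriolis terms $(-\rho u_2,\rho u_1)$ appear as lower-order inhomogeneities, so the Riemann invariants are no longer constant along the characteristics, but they satisfy an integral identity along them which is sufficient for estimates. The contact discontinuity $g_{cd}$ is recovered afterwards through the Rankine--Hugoniot conditions \eqref{2-9}, which in Lagrangian coordinates reduce to $[P]=0$ on $y_2=0$ together with an ODE for $g_{cd}$.

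Next I would linearize the diagonal system around the background $\bm U_b^\pm$ and, given a trial state $\widetilde{\bm U}$ close to $\bm U_b$, construct the next iterate $\bm U$ by the method of characteristics. The two families of characteristics either enter from $\{y_1=0\}$, where the incoming data \eqref{2-6} is prescribed, or reflect a finite number of times off the boundaries $\{y_2=\pm 1\}$ and $\{y_2=0\}$. For each segment I would use the linearized slip condition on $\Gamma_w^\pm$ or the linearized pressure matching $[P]=0$ on $\Gamma$ to express the outgoing Riemann invariant in terms of the incoming one, and propagate $C^{1,\alpha}$ norms along the way; since $L$ is finite, only finitely many reflections occur and the estimates close with a constant $\mc_1$ depending only on $(\bm U_b^\pm,L,\alpha)$. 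I expect this step to be the main technical obstacle, for two reasons: the pressure-matching condition on $y_2=0$ couples the $+$ and $-$ halves at every reflection, so the two linearized systems must be solved as one coupled system rather than independently; and the Coriolis-induced inhomogeneities must be handled delicately in the $C^{1,\alpha}$ norm so that the bound picked up over all reflections remains linear in $\sigma$.

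Finally I would define the iteration map $\mathcal{T}(\widetilde{\bm U}):=\bm U$ and check that $\mathcal{T}$ maps the closed ball $\{\|\bm U-\bm U_b\|_{1,\alpha}\le \mc_1\sigma\}$ into itself, using the linear estimate from the previous step together with the smallness condition \eqref{2-12}. Continuity of $\mathcal{T}$ in the weaker $C^{1,\alpha/2}$ norm, combined with the compact embedding $C^{1,\alpha}\hookrightarrow C^{1,\alpha/2}$, yields a fixed point by the Schauder theorem; this gives a piecewise $C^{1,\alpha}$ solution in Lagrangian coordinates, and the inverse transformation (a $C^{1,\alpha}$ diffeomorphism since its Jacobian remains close to the background one) produces $(\bm U^\pm,g_{cd})$ satisfying \eqref{2-13}. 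The $C^{2,\alpha}$ regularity of $g_{cd}$ in \eqref{2-14} then comes from integrating the ODE $g_{cd}'=u_2^\pm/u_1^\pm\big|_{\Gamma}$ with $g_{cd}(0)=0$ and using the $C^{1,\alpha}$ bound on $\bm U$. Uniqueness follows by applying the same linear characteristic estimates to the difference of two solutions, which satisfies a homogeneous linearized problem with vanishing incoming data and hence must be zero.
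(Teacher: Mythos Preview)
Your proposal is correct and follows essentially the same route as the paper: Lagrangian transformation to fix the free boundary, diagonalization by Riemann invariants with $A$ and $B$ transported along particle paths, a linearized problem solved by tracking characteristics through a finite number of reflections off the walls and the contact line, Schauder fixed point via the compact embedding $C^{1,\alpha}\hookrightarrow C^{1,\alpha'}$, and uniqueness from the homogeneous linear estimate on the difference. Two small points of precision: the walls land on $y_2=m^+$ and $y_2=-m^-$ (mass fluxes) rather than $\pm 1$ unless you normalize, and on the contact line you need \emph{both} Rankine--Hugoniot relations---the angle matching $u_2^+/u_1^+=u_2^-/u_1^-$ and the pressure matching $[P]=0$---to express the two outgoing Riemann invariants in terms of the two incoming ones; the ODE for $g_{cd}$ is then a separate post-processing step.
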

 \section{The reformulation of Problem 2.2}\noindent
\par In this section, we first  reduce the free boundary value problem 2.2 to the fixed boundary value problem 3.1 by the Lagrangian transformation, then the Riemann invariants can be employed to diagonalize the rotating Euler system in the  Lagrangian coordinates. Finally, we design an iteration scheme by linearizing the nonlinear fixed boundary value problem near the background solution and establish  the estimate of the eigenvalues for the linearized  hyperbolic system.
\subsection{Reformulation by the Lagrangian transformation}\noindent
 \par Since the tangent of the contact discontinuity $ \Gamma $ is parallel to the velocity of the flow on its both sides, it is  convenient to apply the Lagrangian transformation to fix the free boundary $ \Gamma $ and  reformulate $\mathbf{ Problem \ 2.2} $ into a fixed boundary value problem in the Lagrangian coordinates.
 \par  Let $ (\bm{U}^{-}, \bm{U}^{+}, g_{cd}(x_1)) $ be a solution to  $\mathbf{ Problem \ 2.2} $.
 Define
 \begin{equation}\label{3-1}
 m^{-}=\int_{g^-(0)}^{0}\rho_{0}^{-}u_{10}^{-}(s)\de s \quad {\rm{and}}\quad
  m^{+}=\int_{0}^{g^+(0)}
  \rho_{0}^{+}u_{10}^{+}(s)\de s.
  \end{equation}
 Then for any $x_1\in(0,L)$, it follows from the conservation of mass equation in \eqref{1-1} that
 \begin{equation}\label{3-2}
 \int_{g^-(x_1)}^{g_{cd}(x_1)}\rho^{-} u_{1}^{-}(x_1,s)\de s=m^{-}, \quad
 \int_{g_{cd}(x_1)}^{g^+(x_1)}\rho^{+} u_{1}^{+}(x_1,s)\de s =m^{+}.
 \end{equation}
 For $ (x_1,x_2)\in \Omega $,  define
\begin{equation}\label{3-3}
y_1=x_1, \quad  y_2(x_1,x_2)= \int_{g^-(x_1)}^{x_2}\rho u_{1}(x_1,s)\de s-m^{-}.
  \end{equation}
    It is easy to verify that
  \begin{equation*}
  \frac{\p y_2}{\p x_1}=-\rho u_2, \quad  \frac{\p y_2}{\p x_2}=\rho u_1.
  \end{equation*}
  Furthermore, a direct computation gives
   \begin{equation*}
   \frac{\p(y_1,y_2)}{\p(x_1.x_2)}=\left|
\begin{matrix} 1& 0 \\ -\rho u_2 & \rho u_1\end{matrix}\right|=\rho u_1.
 \end{equation*}
 So  if $ (\rho^\pm,u_1^\pm, u_2^\pm,P^\pm) $ are close to the background solutions
$ (\rho_b^\pm,u_{b}^\pm,0,P_b^\pm) $, we have $ \rho^\pm u_1^\pm> 0 $. Hence the  Lagrangian transformation is invertible.
\par Under this transformation,  the domain $ \Omega $ becomes
\begin{equation*}
\mn=\{(y_1,y_2):0<y_1<L,\ -m^{-}<y_2<m^{+}\}.
\end{equation*}
The lower wall $ \Gamma_w^-$ and the upper wall $ \Gamma_w^+$  are  straightened into
\begin{equation}\label{3-5}
\begin{aligned}
\Sigma_w^-=\{(y_1,y_2): 0<y_1<L,\ y_2=-m^{-}\}, \quad
\Sigma_w^+=\{(y_1,y_2): 0<y_1<L,\ y_2=m^{+}\}.
\end{aligned}
\end{equation}
Meanwhile, the entrance of the domain is  transformed  into
\begin{equation}\label{3-6}
\begin{aligned}
\Sigma_{0}^{+}=\{(y_1,y_2):  y_1=0,\ 0<y_2<m^{+}\}, \quad
\Sigma_{0}^{-}=\{(y_1,y_2):  y_1=0,\ -m^{-}<y_2<0\}.
\end{aligned}
\end{equation}
Moreover, on $ \Gamma $, one has
\begin{equation*}
y_2(x_1,g_{cd}(x_1))=\int_{g^-(x_1)}^{g_{cd}(x_1)}\rho u_{1}(x_1,s)\de s-m^{-}=0.
\end{equation*}
Hence the free boundary $\Gamma $ becomes the following fixed straight line
\begin{equation}\label{3-7}
\Sigma=\{(y_1,y_2): 0<y_1<L,\ y_2=0\}.
\end{equation}
\par Define
\begin{equation*}
 \mn^{+}=\mn\cap \{0<y_2<m^{+}\},\quad \mn^{-}=\mn\cap \{-m^{-}<y_2<0\}.
\end{equation*}
Let
\begin{equation*}
{\bm{U}}(y_1,y_2)=
  \begin{cases}
   {\bm{U}}^{+}(y_1,y_2):= (\rho^{+},u_1^{+},u_{2}^{+},P^{+})(\mathbf{x}(y_1,y_2)),
     \quad  {\rm{in}}\quad   \mn^{+},\\
     {\bm{U}}^{-}(y_1,y_2):=(\rho^{-},u_1^{-},
   u_{2}^{-},P^{-})(\mathbf{x}(y_1,y_2)),  \quad {\rm{in}}\quad  \mn^{-}.\\
  \end{cases}
 \end{equation*}
Then the rotating Euler system \eqref{1-1} in the new coordinates  can be written as
\begin{equation}\label{3-8}
\begin{cases}
\p_{y_1}\left(\frac{1}{\rho u_1}\right)-\p_{y_2}\left(\frac{ u_2}{u_1}\right)
=0,\\
\p_{y_1} u_2+\p_{y_2}P+1=0,\\
\p_{y_1} u_1+\frac{1}{\rho u_1}(\p_{y_1}-\rho u_2\p_{y_2})P-\frac{u_2}{u_1}=0,\\
\p_{y_1}B=0.\\
\end{cases}
\end{equation}
\par  In  the new coordinates $ (y_1,y_2) $,   one derives
\begin{equation*}
  \frac{\p x_2}{\p y_1}=\frac{u_2}{ u_1},\quad  \frac{\p x_2}{\p y_2}=\frac{1}{\rho u_1}, \quad x_2(y_1,-m^{-})=g^-(y_1).
  \end{equation*}
  Then it holds that
\begin{equation}\label{3-9}
x_2(y_1,y_2)
=\int_{-m^{-}}^{y_2}\frac{1}{\rho u_{1}}(y_1,s)\de s+g^-(y_1).
\end{equation}
In particular, for the background solution $ (\rho_b^\pm,u_{b}^\pm,0,P_b^\pm) $,  one has
\begin{equation}\label{3-10}
x_{2b}(y_2)
=\int_{-m_b^{-}}^{y_2}\frac{1}{\rho_bu_{b}}(s)\de s-1.
\end{equation}
That is
\begin{equation*}
x_{2b}(y_2)=
\begin{cases}
\begin{aligned}
&x_{2b}^{-}(y_2)
=\int_{-m_b^{-}}^{y_2}\frac{1}{\rho_b^{-} u_{b}^{-}}(s)\de s-1, \quad -m_b^{-}<y_2<0,\\
&x_{2b}^{+}(y_2)
=\int_{0}^{y_2}\frac{1}{\rho_b^{+} u_{b}^{+}}(s)\de s, \quad\quad\quad\quad 0<y_2<m_b^{+},\\
\end{aligned}
\end{cases}
\end{equation*}
where $ m_b^{-}=\int_{-1}^{0}\rho_b^{-}u_{b}^{-}(s)\de s $ and $m_b^{+}=\int_{0}^{1}\rho_b^{+} u_{b}^{+}(s)\de s $. For convenience, we assume that
 $  m^{\pm}=m_b^{\pm} $.
Therefore the background solutions in the  Lagrange coordinates are
\begin{equation}\label{3-11}
\w {\bm{U}}_b(y_2)=
\begin{cases}
\begin{aligned}
  &\w {\bm{U}}_b^{-}(y_2)=(\w \rho_b^{-},\w u_{b}^{-},0, \w P_b^{-})(y_2)=(\rho_b^{-},u_{b}^{-},0,P_b^{-})( x_{2b}^{-}(y_2)), \quad {\rm{in}} \quad \mn^{-},\\
  &\w {\bm{U}}_b^{+}(y_2)=(\w \rho_b^{+},\w u_{b}^{+},0, \w P_b^{+})(y_2)=(\rho_b^{+},u_{b}^{+},0,P_b^{+})( x_{2b}^{+}(y_2)), \quad {\rm{in}} \quad \mn^{+}.\\
  \end{aligned}
  \end{cases}
 \end{equation}
 \par At the entrance, the incoming flow is given by
\begin{equation}\label{3-12}
\bm{U}_{0}(y_2)=
\begin{cases}
\bm{U}_{0}^{-}(y_2)=
(\w\rho_{0}^{-},\w U_{10}^{-},\w U_{20}^{-},\w P_{0}^{-})(y_2)  \quad {\rm{on}}\quad \Sigma_{0}^{-},\\
 \bm{U}_{0}^{+}(y_2)=
(\w\rho_{0}^{+},\w U_{10}^{+},\w U_{20}^{+},\w P_{0}^{+})(y_2)  \quad {\rm{on}}\quad \Sigma_{0}^{+},\\
  \end{cases}
 \end{equation}
 where
 \begin{equation*}
 (\w\rho_{0}^{-},\w U_{10}^{-},\w U_{20}^{-},\w P_{0}^{-})(y_2)
 =(\rho_{0}^{-},U_{10}^{-},U_{20}^{-}, P_{0}^{-})\left(\int_{-m^{-}}^{y_2}\frac{1}{\rho_0^- u_{10}^-}(s)\de s+g^-(0)\right),
 \end{equation*}
 and
 \begin{equation*}
 (\w\rho_{0}^{+},\w U_{10}^{+},\w U_{20}^{+},\w P_{0}^{+})(y_2)
 =(\rho_{0}^{+},U_{10}^{+},U_{20}^{+}, P_{0}^{+})\left(\int_{0}^{y_2}\frac{1}{\rho_0^+ u_{10}^+}(s)\de s\right).
 \end{equation*}
 Moreover, it follows from the fourth equation in \eqref{3-8} that
 \begin{equation}\label{3-43}
B(y_1,y_2)=\w B_{0}(y_2)=\begin{cases}
\w B_{0}^{-}(y_2), \quad {\rm{in}} \quad \mn^{-},\\
\w B_{0}^{+}(y_2), \quad {\rm{in}} \quad \mn^{+}.\\
\end{cases}
\end{equation}
Here $ \w B_{0}^\pm=\frac12 (|\w U_{10}^{\pm}|^2+|\w U_{20}^{\pm}|^2)+\frac{\gamma\w P_{0}^\pm}{(\gamma-1)\w \rho_{0}^\pm} $.
\par The Rankine-Hugoniot conditions in \eqref{2-9}  become
\begin{equation}\label{3-13}
\frac{u_2^{+}}{u_1^-}(y_1,0)
=\frac{u_2^-}{u_1^-}(y_1,0),\quad
P^+(y_1,0)=P^-(y_1,0), \ {\rm{on}}\quad \Sigma.
\end{equation}
The slip boundary conditions in \eqref{2-10} can be written as
\begin{equation}\label{3-14}
\frac{u_2^+}{u_1^+}(y_1,m^+)=(g^+)^{\prime}(y_1), \ {\rm{on}}\quad \Sigma_w^+, \quad \frac{u_2^-}{u_1^-}(y_1,-m^-)=(g^-)^{\prime}(y_1), \ {\rm{on}}\quad \Sigma_w^-.
\end{equation}
\par Thus the free boundary value problem, $ \mathbf{Problem \ 2.2}$, in the Euler coordinates can be reformulated as the following problem in the Lagrangian coordinates.
\begin{problem}
     Given   supersonic incoming flows  $ \bm{U}_{0}^\pm(x_2) $ at the entrance satisfying \eqref{2-7}, find a unique piecewise smooth supersonic solution $ (\bm{U}^+ ,\bm{U}^-) $  defined on $ \mn^+ $ and $ \mn^- $ separated by the straight line $ \Sigma $ satisfying the rotating Euler system \eqref{3-8} in the sense of weak solution and  the Rankine-Hugoniot conditions in \eqref{3-13} and the slip boundary conditions in \eqref{3-14}.
 \end{problem}
 \par Before we state  the main result  in the Lagrangian coordinates, let us introduce the following notation: a constant $ C $ is said to depend on the data if  it depends on $ (\w{\bm U}_b^+,\w{\bm U}_b^-,L,\alpha) $. Then Theorem 2.2 will   follows from the following theorem:
  \begin{theorem}
    There exist positive constants $\sigma_2 $ and $ \mc_2 $ depending only on the data such that  if
   \begin{equation}\label{3-15}
   \begin{aligned}
   \sigma(\bm U_{0}^+,\bm U_{0}^-,g^+,g^-)\leq \sigma_2,
   \end{aligned}
   \end{equation}
   $\mathbf{Problem \ 3.1} $ has a unique piecewise smooth supersonic flow  $ (\bm{U}^+,\bm{U}^-) $    satisfying
 \begin{equation}\label{3-16}
 \|\bm{U}^{+} -\w{\bm{U}}_b^+\|_{1,\alpha;\overline{\mn^+}}+ \|\bm{U}^{-} -\w{\bm{U}}_b^-\|_{1,\alpha;\overline{\mn^-}}
\leq \mc_2  \sigma(\bm U_{0}^+,\bm U_{0}^-,g^+,g^-).
\end{equation}
\end{theorem}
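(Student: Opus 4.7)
The plan is to reduce $\mathbf{Problem\ 3.1}$ to a quasilinear hyperbolic system for the Riemann invariants in diagonal form, linearize around $\w{\bm U}_b^\pm$, and solve the resulting linear boundary value problem in each layer $\mn^\pm$ by the method of characteristics, with the two layers coupled through the Rankine--Hugoniot conditions \myref{3-13} on the straight line $\Sigma$.

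First I would diagonalize \myref{3-8}. The fourth equation already gives the Bernoulli conservation \myref{3-43}, so $B$ is a known function of $y_2$ determined by $\w B_0^\pm$. Combined with the conservation of $A(S)$ along streamlines (which in Lagrangian coordinates are $\{y_2=\text{const}\}$), the density $\rho$ can be eliminated algebraically in favour of $P$ and $|\bm u|^2$, reducing \myref{3-8} to a $2\times 2$ supersonic hyperbolic system in, say, $(u_2,P)$ with two real characteristic speeds $\lambda_-<0<\lambda_+$. Introducing the associated Riemann invariants $R_\pm$ puts the system into the diagonal form
\begin{equation*}
(\p_{y_1}+\lambda_\pm\p_{y_2})R_\pm=F_\pm(R_+,R_-,y_2),
\end{equation*}
where $F_\pm$ collects the Coriolis source and the explicit $y_2$-dependence. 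Unlike the classical setting of \cite{HFWX19,HFWX21}, $R_\pm$ are no longer constant along characteristics, but an integral representation along them still holds.

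Next I would set up a Picard iteration. Given a state $\w{\bm U}^{(n),\pm}\in C^{1,\alpha}(\overline{\mn^\pm})$ with $\|\w{\bm U}^{(n),\pm}-\w{\bm U}_b^\pm\|_{1,\alpha;\overline{\mn^\pm}}\leq \mc_2\sigma$, freeze $\lambda_\pm$ and $F_\pm$ at $\w{\bm U}^{(n),\pm}$ to get a linear transport system for $(R_+^{(n+1)},R_-^{(n+1)})$ in $\mn^\pm$. The boundary data are: at $\Sigma_0^\pm$ the incoming Riemann invariants are prescribed by $\bm U_0^\pm$; at the walls $\Sigma_w^\pm$ the perturbed slip condition \myref{3-14} expresses the outgoing invariant in terms of the incoming one plus $(g^\pm)'$; and at $\Sigma$ the matching \myref{3-13} couples the two layers by determining the two outgoing invariants (one into $\mn^+$, one into $\mn^-$) from the two incoming ones. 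Tracing characteristics backward from any target point $(y_1,y_2)\in\mn^\pm$, one follows a polygonal trajectory of segments separated by reflections at $\Sigma_w^\pm$ or $\Sigma$, and obtains $R_\pm^{(n+1)}(y_1,y_2)$ as the boundary datum at the exit on $\Sigma_0^\pm$ plus a sum of line integrals of $F_\pm$. Differentiation in $y_2$, together with the $C^{1,\alpha}$ regularity of the characteristic maps, then yields
\begin{equation*}
\|\w{\bm U}^{(n+1),\pm}-\w{\bm U}_b^\pm\|_{1,\alpha;\overline{\mn^\pm}}\leq \mc_2\sigma.
\end{equation*}

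The main technical obstacle is precisely this characteristic analysis. Because $|\lambda_\pm|$ are bounded away from zero, a single characteristic may reflect several times between $\Sigma_w^\pm$ and $\Sigma$ before exiting at $\Sigma_0^\pm$, and at every reflection both a new boundary contribution and a multiplicative factor from the reflection formula enter. One must bound the number of reflections uniformly in $L$, propagate the $C^{1,\alpha}$ norms across each reflection (with the compatibility \myref{2-7} at the corners $\Sigma\cap\Sigma_0^\pm$ ensuring regularity there), and absorb the Coriolis source using smallness of $\sigma$; this is precisely the case-by-case work deferred to Section~4. Once closedness and continuity in a weaker norm (say $C^1$) of the iteration map $\mt:\w{\bm U}^{(n)}\mapsto\w{\bm U}^{(n+1)}$ are established, the Schauder fixed point theorem produces a piecewise $C^{1,\alpha}$ solution to $\mathbf{Problem\ 3.1}$ satisfying \myref{3-16}. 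For uniqueness, the difference of two solutions obeys a linear diagonal hyperbolic system with small coefficients and zero boundary data, and a Gronwall-type estimate along characteristics forces it to vanish.
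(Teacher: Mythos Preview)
Your proposal is correct and follows essentially the same route as the paper: diagonalize via Riemann invariants (Section~3.2), linearize around the background and iterate on a $C^{1,\alpha}$ ball (Section~3.3), solve the linear problem by the method of characteristics with reflections at the walls and at $\Sigma$ case by case (Section~4), then apply Schauder in a weaker $C^{1,\alpha'}$ topology and conclude uniqueness by a Gronwall argument on the difference (Section~5). One small wording point: the Coriolis source itself is $O(1)$, and what is actually small and absorbed is its deviation from the background contribution (cf.\ the definition of $H_i^\pm$ after \eqref{3-35}), but that is precisely what your linearization produces.
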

\begin{remark}
If Theorem 3.2 holds, then it follows from \eqref{3-9} that
\begin{equation*}
g_{cd}(y_1)=\int_{-m^{-}}^{0}\frac{1}{\rho^- u_{1}^-}(y_1,s)\de s+g^-(y_1),
\quad y_1\in[0,L].
\end{equation*}
Obviously,
$g_{cd}^{\prime}(y_1)=\frac{u_2}{u_1}(y_1,g_{cd}(y_1))\in C^{1,\alpha}([0,L])$. Therefore in the rest of paper, we only consider $\mathbf{ Problem \ 3.1} $ and prove Theorem 3.2.
\end{remark}
  \subsection{ Riemann invariants}\noindent
\par First, we rewrite \eqref{3-8} as the following first-order non-divergence symmetric system:
\begin{equation}\label{3-17}
A(\bm{U})\p_{y_1}\bm{U}+B(\bm{U})\p_{y_2}\bm{U}
=C(\bm{U}),
\end{equation}
where $ \bm{U}=(u_1,u_2,P) $ and
\begin{equation*}
A(\bm{U})=\left(\begin{array}{ccc}
1 & 0 & \frac{1}{\rho u_1}\\
0 & 1 & 0 \\
 \frac{1}{\rho u_1}& 0  & \frac{1}{\rho^2 c^2}
 \end{array}\right)
  \quad B(\bm{U})=\left(\begin{array}{ccc}
0 & 0 &\frac{-u_2}{ u_1}\\
0 & 0 & 1 \\
 \frac{-u_2}{ u_1}& 1  & 0
\end{array}\right)
\quad C(\bm{U})=\left(\begin{array}{ccc}
\frac{u_2}{ u_1} \\
 -1 \\
 0\\
\end{array}\right).
 \end{equation*}
The eigenvalues of \eqref{3-17} are
\begin{equation*}
\lambda_1=\frac{\rho u_1c^2}{u_1^2-c^2}\left(\frac {u_2} {u_1}-
\frac{\sqrt{u_1^2+u_2^2-c^2}}{c}\right),\ \ \lambda_0=0,\ \  \lambda_2=\frac{\rho u_1c^2}{u_1^2-c^2}\left(\frac {u_2} {u_1}+
\frac{\sqrt{u_1^2+u_2^2-c^2}}{c}\right),
\end{equation*}
and the associated right eigenvectors are
\begin{equation*}
r_1=\left(-\frac{\lambda_1}{\rho u_1}-\frac {u_2} {u_1},1,\lambda_1\right)^\top,
\ \ r_0=(u_1,u_2,0)^\top, \ \ r_2=\left(-\frac{\lambda_2}{\rho u_1}-\frac {u_2} {u_1},1,\lambda_2\right)^\top.
\end{equation*}
Let
\begin{equation*}
l_i=(r_i)^\top, \quad i=1,2, \quad l_0=(r_0)^\top.
\end{equation*}
Multiplying system \eqref{3-17} by $ l_i=(r_i)^\top $ and $ l_0 $ to get
\begin{equation}\label{3-18}
\begin{cases}
\begin{aligned}
& {u_2} (\p_{y_1}u_1+\lambda_1\p_{y_2}u_1)-{u_1}(\p_{y_1}u_2+\lambda_1\p_{y_2}u_2)
+\frac{\sqrt{u_1^2+u_2^2-c^2}}{\rho c}(\p_{y_1}P+\lambda_1\p_{y_2}P)\\
&=\left(\frac{\lambda_1}{\rho u_1}+\frac {u_2} {u_1}\right) {u_2} + {u_1},\\
& {u_2} (\p_{y_1}u_1+\lambda_2\p_{y_2}u_1)-{u_1}(\p_{y_1}u_2+\lambda_2\p_{y_2}u_2)
-\frac{\sqrt{u_1^2+u_2^2-c^2}}{\rho c}(\p_{y_1}P+\lambda_2\p_{y_2}P)\\
&=\left(\frac{\lambda_2}{\rho u_1}+\frac {u_2} {u_1}\right) {u_2} + {u_1},
\end{aligned}
\end{cases}
\end{equation}
and
\begin{equation}\label{3-19}
u_1\p_{y_1}u_1+u_2\p_{y_1}u_2+\frac{\p_{y_1}P}{\rho}=0.
\end{equation}
\par  By the  Bernoulli's law, \eqref{3-19} can be rewritten as
\begin{equation*}
\p_{y_1} A=0,
\end{equation*}
which implies
\begin{equation}\label{3-20}
A(y_1,y_2)=\w A_{0}(y_2)=\begin{cases}
\w A_{0}^{-}(y_2), \quad {\rm{in}} \quad \mn^{-},\\
\w A_{0}^{+}(y_2), \quad {\rm{in}} \quad \mn^{+}.\\
\end{cases}
\end{equation}
Here $ \w A_{0}^\pm=\frac{\w P_{0}^\pm}{(\w \rho_{0}^\pm)^\gamma} $.
 Denote
\begin{equation*}
W=\frac{u_2}{u_1},\quad J=\frac{\sqrt{u_1^2+u_2^2-c^2}}{\rho c u_1^2}.
\end{equation*}
 Then it follows from the Bernoulli's law  that $ u_1, u_2 $ can be expressed as
\begin{equation}\label{3-22}
u_1=
\sqrt{\frac{2\left((\gamma-1)\w B_{0}-\gamma(\w A_{0})^{\frac{1}{\gamma}}
P^{\frac{\gamma-1}{\gamma}}\right)}{(\gamma-1)(1+W^2)}},\ \
u_2=
W\sqrt{\frac{2\left((\gamma-1)\w B_{0}-\gamma(\w A_{0})^{\frac{1}{\gamma}}
P^{\frac{\gamma-1}{\gamma}}\right)}{(\gamma-1)(1+W^2)}}.
\end{equation}
Thus \eqref{3-22} implies that we only need to solve $ W $ and $ P $. \par First,
 \eqref{3-18} can be written as
\begin{equation}\label{3-21}
\begin{cases}
(\p_{y_1}+\lambda_1\p_{y_2})W-J(\p_{y_1}+\lambda_1\p_{y_2})P=
-\left(\frac{\lambda_1}{\rho u_1}+\frac {u_2} {u_1}\right)\frac {u_2}{u_1^2} - \frac {1}{u_1}, \\
(\p_{y_1}+\lambda_2\p_{y_2})W+J(\p_{y_1}+\lambda_2\p_{y_2})P=
-\left(\frac{\lambda_2}{\rho u_1}+\frac {u_2} {u_1}\right)\frac {u_2}{u_1^2} - \frac {1}{u_1}.
\end{cases}
\end{equation}
Set $ G=(W,P)^\top $. Then \eqref{3-21} can be rewritten as the  following first-order  symmetric system:
\begin{equation}\label{3-23}
\p_{y_1}G+M\p_{y_2}G=N,
 \end{equation}
 where
 \begin{equation*}
M=\left(\begin{array}{ccc}
\frac{\rho c^2 u_2}{u_1^2-c^2}& \frac{u_1^2+u_2^2-c^2}{u_1(u_1^2-c^2)} \\
\frac{\rho^2 c^2 u_1^3}{u_1^2-c^2} & \frac{\rho c^2 u_2}{u_1^2-c^2} \\
 \end{array}\right),\quad
 N=\left(\begin{array}{ccc}
-\frac{u_1^2+u_2^2-c^2}{u_1(u_1^2-c^2)}   \\
-\frac{\rho c^2 u_2}{u_1^2-c^2}   \\
\end{array}\right).
 \end{equation*}
 A direct computation yields that the eigenvalues of \eqref{3-23} are $ \lambda_i $  $ (i=1,2) $ and the corresponding right eigenvectors are
  \begin{equation*}
  \w r_1=(\sqrt{u_1^2+u_2^2-c^2},-\rho cu_1^2)^\top, \quad  \w r_2=(\sqrt{u_1^2+u_2^2-c^2},\rho cu_1^2)^\top.
   \end{equation*}
   Define the Riemann invariants $ Z_i $ $ (i=1,2) $ for the system \eqref{3-23} as
   \begin{equation}\label{3-24}
   Z_1=\arctan W-\Lambda( P,\w B_{0},\w A_{0}),\quad
   Z_2=\arctan W+\Lambda(P,\w B_{0},\w A_{0}),
   \end{equation}
   where
   \begin{equation}\label{3-25}
  \Lambda(P,\w B_{0},\w A_{0})=\int^{P}\frac{\sqrt{2\w B_{0}-\frac{\gamma(\gamma+1)}
  {\gamma-1}
  \w A_{0}^{\frac{1}{\gamma-1}}s^{\frac{\gamma-1}{\gamma}}}}
  {2\gamma^{\frac12} \w A_{0}^{-\frac{1}{2\gamma}}\left(\w B_{0}-\frac{\gamma}{\gamma-1}
  \w A_{0}^{\frac{1}{\gamma}}s^{1-\frac{1}{\gamma}}\right)s^{\frac{\gamma+1}{2\gamma}}}\de s.
   \end{equation}
  Then it follows from \eqref{3-24} and \eqref{3-25} that
  \begin{equation}\label{3-26}
  W=\tan\left(\frac{Z_1+Z_2}{2}\right), \quad \Lambda(P,\w B_{0},\w A_{0})
  =\frac12(Z_2-Z_1).
   \end{equation}
  \par Set $  \bm Z=(Z_1,Z_2)^\top $.
   A direct  computation yields that
    \begin{equation}\label{3-27}
    \begin{aligned}
    &\frac{\p P}{\p Z_1}=-\frac{1}{2\p_P\Lambda(P,\w B_{0},\w A_{0})}, \quad \frac{\p P}{\p Z_2}=\frac{1}{2\p_P\Lambda(P,\w B_{0},\w A_{0})},\\
   &\p_P\Lambda(P,\w B_{0},\w A_{0})=\frac{\sqrt{2\w B_{0}-\frac{\gamma(\gamma+1)}
  {\gamma-1}
  \w A_{0}^{\frac{1}{\gamma-1}}P^{\frac{\gamma-1}{\gamma}}}}
  {2\gamma^{\frac12} \w A_{0}^{-\frac{1}{2\gamma}}\left(\w B_{0}-\frac{\gamma}{\gamma-1}
 \w  A_{0}^{\frac{1}{\gamma}}P^{1-\frac{1}{\gamma}}\right)
  P^{\frac{\gamma+1}{2\gamma}}}
  >0.\\
  \end{aligned}
    \end{equation}
    Thus  the implicit function theorem implies that  the equation \eqref{3-26} has a unique solution $ P=P( \bm Z,\w B_{0},\w A_{0}) $ and $W=W(\bm Z)$.

    %\begin{remark}
    %By \eqref{2-9} and Lemma 2.4, we obtain that $ r $ as a function of $ \bm Z,B_{en},A_{en} $ and $ g_-$, i.e, $ r=r(\bm Z,B_{en},A_{en},g_-)$.
    %\end{remark}
    \par Next, we consider the following nonlinear boundary value problem $ ({\mathbf{P}})$:
    \begin{equation}\label{3-28}
   ({\mathbf{P}}) \quad \begin{cases}
    \p_{y_1} \bm Z^-+diag(\lambda_1^-,\lambda_2^-)\p_{y_2} \bm Z^-=\bm K^-,
    &\quad {\rm in} \quad\mn^-,\\
    \p_{y_1} \bm Z^++diag(\lambda_1^+,\lambda_2^+)\p_{y_2} \bm Z^+=\bm K^+,
    &\quad {\rm in} \quad\mn^+,\\
     (Z_1^-,Z_2^-)=(Z_{10}^-,Z_{20}^-),&\quad {\rm on} \quad \Sigma_{0}^-,\\
     (Z_1^+,Z_2^+)=(Z_{10}^+,Z_{20}^+),&\quad {\rm on} \quad \Sigma_{0}^+,\\
      Z_1^-+ Z_2^-=2\arctan{(g^-)^{\prime}},&\quad {\rm on} \quad \Sigma_w^-,\\
      Z_1^++ Z_2^+=2\arctan{(g^+)^{\prime}},&\quad {\rm on} \quad \Sigma_w^+,\\
      Z_1^-+ Z_2^-=Z_1^++ Z_2^+ &\quad {\rm on} \quad \Sigma,\\
      P(\bm Z^-,\w B_{0}^-,\w A_{0}^-)= P(\bm Z^+,\w B_{0}^+,\w A_{0}^+)&\quad {\rm on} \quad \Sigma,\\
      \end{cases}
    \end{equation}
    where
    \begin{equation*}
    \begin{aligned}
     & \bm K^\pm=(K_1^\pm, K_2^\pm)^\top( \bm Z^\pm,\w B_{0}^\pm,\w A_{0}^\pm),\\
     &K_1^\pm( \bm Z^\pm,\w B_{0}^\pm,\w A_{0}^\pm)
  =-\frac{u_1^\pm((u_1^\pm)^2+(u_2^\pm)^2-(c^\pm)^2)}
  {((u_1^\pm)^2+(u_2^\pm)^2)((u_1^\pm)^2-(c^\pm)^2)}   +\frac{ c^\pm u_2^\pm
  \sqrt{(u_1^\pm)^2+(u_2^\pm)^2-(c^\pm)^2}}
 {((u_1^\pm)^2+(u_2^\pm)^2)((u_1^\pm)^2-(c^\pm)^2)},\\
   &K_2^\pm( \bm Z^\pm,\w B_{0}^\pm,\w A_{0}^\pm)
  =-\frac{u_1^\pm((u_1^\pm)^2+(u_2^\pm)^2-(c^\pm)^2)}
  {((u_1^\pm)^2+(u_2^\pm)^2)((u_1^\pm)^2-(c^\pm)^2)}   -\frac{ c^\pm u_2^\pm
  \sqrt{(u_1^\pm)^2+(u_2^\pm)^2-(c^\pm)^2}}
 {((u_1^\pm)^2+(u_2^\pm)^2)((u_1^\pm)^2-(c^\pm)^2)}.\\
 \end{aligned}
     \end{equation*}

\par Thus $\mathbf{ Problem \ 3.1} $ is reformulated as the following problem:
    \begin{problem}
    Given supersonic incoming flows $\bm Z_{0}^\pm $ at the entrance, find a unique piecewise smooth supersonic solution  $(\bm Z^{-},\bm Z^{+}) $ of the nonlinear boundary value problem  $ ({\mathbf{P}}) $.
    \end{problem}
     For $\mathbf{Problem \ 3.4}$,  we have the following theorem:
    \begin{theorem}
     Let
  \begin{equation*}
  \bm Z_{b}^{\pm}=( Z_{1b}^{\pm}, Z_{2b}^\pm)^\top,
    \end{equation*}
   where
   \begin{equation*}
   \begin{aligned}
   & Z_{1b}^\pm=-\Lambda(  \w P_b^\pm ,\w B_b^\pm, A_b^\pm),\quad
    Z_{2b}^\pm=\Lambda(  \w P_b^\pm,\w B_b^\pm, A_b^\pm),\\
  &\Lambda(\w P_b^\pm, \w B_b^\pm, A_b^\pm)=\int^{\w P_b^\pm}\frac{\sqrt{2 \w B_b^\pm-\frac{\gamma(\gamma+1)}{\gamma-1}
  ( A_b^\pm)^{\frac{1}{\gamma-1}}s^{\frac{\gamma-1}{\gamma}}}}
  {2\gamma^{\frac12} ( A_b^\pm)^{-\frac{1}{2\gamma}}\left( \w B_b^\pm-\frac{\gamma}{\gamma-1}
  (  A_b^\pm)^{\frac{1}{\gamma}}s^{1-\frac{1}{\gamma}}\right)
  s^{\frac{\gamma+1}{2\gamma}}}\de s,\\
 &\w B_b^\pm=\frac{1}{2}(\w u_{b}^\pm)^2+\frac{\w P_b^\pm}{(\gamma-1)\w \rho_b^\pm}.
  \end{aligned}
   \end{equation*}
Then given  functions $\bm Z_{0}^\pm(y_2) $, we define
\begin{equation}\label{3-29}
 \bm V_0^\pm=(\bm Z_{0}^\pm,\w B_{0}^\pm,\w A_{0}^\pm),\quad  \bm V_b^\pm=(\bm Z_{b}^\pm,\w B_{b}^\pm,A_{b}^\pm),
\end{equation}
and
 \begin{equation}\label{3-30}
  \begin{aligned}
\sigma(\bm V_0^-,\bm V_0^+,g^-,g^+)
&=
\| \bm V_{0}^--\bm V_{b}^-\|_{1,\alpha;[-m^-,0]}+\| \bm V_{0}^+-\bm V_{b}^+\|_{1,\alpha;[0,m^+]}\\
&\quad+\|g^-+1\|_{2,\alpha;[0,L]}
+\|g^+-1\|_{2,\alpha;[0,L]}.
\end{aligned}
\end{equation}
 There exist a small positive constant $ \sigma_3 $ depending only on  the data  such that  if
 \begin{equation}\label{3-31}
\sigma(\bm V_0^-,\bm V_0^+,g^-,g^+)\leq \sigma_3,
\end{equation}
    $\mathbf{ Problem \ 3.4} $ has  a unique smooth supersonic solution $ (\bm Z^{-},\bm Z^{+})\in C^{1,\alpha}(\overline{\mn^-})\times C^{1,\alpha}((\overline{\mn^+})$ satisfying
 \begin{equation}\label{3-32}
 \|\bm Z^--{\bm Z}_{b}^{-}\|_{1,\alpha;\overline{\mn^-}}
+\|\bm Z^+-{\bm Z}_{b}^{+}\|_{1,\alpha;\overline{\mn^+}}
\leq \mc_3\sigma(\bm V_0^-,\bm V_0^+,g^-,g^+),
\end{equation}
where $ \mc_3>0 $ depends only on  the data.
\end{theorem}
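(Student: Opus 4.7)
The plan is to construct the solution of $(\mathbf{P})$ by a Schauder fixed point argument applied to a linearized iteration scheme, derive the $C^{1,\alpha}$ bound by tracking characteristics through every possible entry and reflection pattern, and finally deduce uniqueness by a contraction-type estimate on the difference of two solutions.

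I first introduce the solution set
\begin{equation*}
\mathcal{X}_\delta := \left\{(\bm Z_*^-,\bm Z_*^+) \in C^{1,\alpha}(\overline{\mn^-})\times C^{1,\alpha}(\overline{\mn^+}):\ \|\bm Z_*^\pm - \bm Z_b^\pm\|_{1,\alpha;\overline{\mn^\pm}} \le \delta\right\}
\end{equation*}
with $\delta = \mc_3 \sigma$ to be chosen, and define an iteration map as follows: given $(\bm Z_*^-,\bm Z_*^+)\in\mathcal{X}_\delta$, freeze the eigenvalues $\lambda_i^\pm=\lambda_i^\pm(\bm Z_*^\pm,\w B_0^\pm,\w A_0^\pm)$ and the Coriolis sources $\bm K^\pm(\bm Z_*^\pm,\w B_0^\pm,\w A_0^\pm)$, and solve the linear transport system
\begin{equation*}
\p_{y_1} Z_i^\pm + \lambda_i^\pm(y_1,y_2)\,\p_{y_2} Z_i^\pm = K_i^\pm(y_1,y_2), \qquad i = 1,2,
\end{equation*}
in $\mn^\pm$, with the prescribed Cauchy data on $\Sigma_0^\pm$, the already linear wall conditions $Z_1^\pm+Z_2^\pm = 2\arctan (g^\pm)'$ on $\Sigma_w^\pm$, the angle matching $Z_1^-+Z_2^- = Z_1^++Z_2^+$ on $\Sigma$, and a linearization of the pressure identity on $\Sigma$ around the background via \eqref{3-27}, which takes the form $a^-(Z_2^- - Z_1^-) - a^+(Z_2^+ - Z_1^+) = f(\bm Z_*^-,\bm Z_*^+)$ with $a^\pm = (2\p_P\Lambda)^{-1}$ evaluated at $(\bar P,\w B_b^\pm,A_b^\pm)$ and $f$ collecting the quadratic remainder. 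The iteration map is $\mathcal{T}(\bm Z_*^-,\bm Z_*^+) := (\bm Z^-,\bm Z^+)$.

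Because the background is supersonic with vanishing transverse velocity, $\lambda_1^\pm < 0 < \lambda_2^\pm$ uniformly on $\mathcal{X}_\delta$ for $\delta$ small, so the two characteristic families are strictly transverse. On the upper wall $\Sigma_w^+$, $Z_2^+$ is transported in by the incoming $\lambda_2^+$-characteristic and $Z_1^+$ is then read off the wall condition; the lower wall is symmetric. On $\Sigma$ the $\lambda_2^-$-characteristic from below and the $\lambda_1^+$-characteristic from above supply $Z_2^-$ and $Z_1^+$, while the two linear transmission conditions (angle matching and linearized pressure) form a $2\times 2$ system for the outgoing pair $(Z_1^-, Z_2^+)$; its invertibility is guaranteed by $A_b^+\ne A_b^-$ together with $\p_P\Lambda>0$. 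Tracing each characteristic back to its origin (entrance, wall reflection, or transmission across $\Sigma$) and using the integral representation
\begin{equation*}
Z_i^\pm(y_1,y_2) = Z_i^\pm(\mathrm{origin}) + \int K_i^\pm(\cdot)\,\de s
\end{equation*}
along the curve gives $C^0$ control; this integral form is needed precisely because the Coriolis source prevents $Z_i^\pm$ from being conserved along characteristics. The number of wall reflections on $[0,L]$ is uniformly bounded for $\delta$ small, so the accumulated propagation constants stay bounded. Differentiating the transport equations and boundary conditions and repeating the characteristic argument for $\p_{y_2}Z_i^\pm$, then using the equations themselves to recover $\p_{y_1}Z_i^\pm$, yields
\begin{equation*}
\|\bm Z^\pm - \bm Z_b^\pm\|_{1,\alpha;\overline{\mn^\pm}} \le \mc(\sigma + \delta^2),
\end{equation*}
so choosing $\mc_3 \ge 2\mc$ and $\sigma_3 \le (2\mc\mc_3)^{-1}$ makes $\mathcal{T}$ map $\mathcal{X}_\delta$ into itself.

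The compact embedding $C^{1,\alpha}\hookrightarrow C^{1,\alpha/2}$ renders $\mathcal{X}_\delta$ precompact in the weaker norm, and continuity of $\mathcal{T}$ in $C^{1,\alpha/2}$ follows from standard stability of the linear transport problem with respect to its frozen coefficients; Schauder's fixed point theorem then provides a fixed point, giving a $C^{1,\alpha}$ solution of $(\mathbf{P})$ that satisfies \eqref{3-32}. Uniqueness follows by subtracting two such solutions and running the same characteristic $L^\infty$ estimate on the difference, whose forcing is quadratically small in $\sigma$. The main obstacle I anticipate is the analysis at the contact discontinuity $\Sigma$: the $2\times 2$ linearized pressure--angle system must be shown to be invertible uniformly along $\Sigma$, and the $C^{1,\alpha}$ regularity of the outgoing invariants $(Z_1^-, Z_2^+)$ there has to be propagated back into $\mn^\pm$ along characteristics whose feet depend on the iterate $\bm Z_*^\pm$, all while keeping the Coriolis-induced quadratic remainder under control and securing Hölder compatibility at the corners $\Sigma\cap\Sigma_0^\pm$ where the entrance data meet the contact.
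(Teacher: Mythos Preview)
Your overall architecture---Schauder fixed point on a ball around $\bm Z_b^\pm$, characteristic propagation with reflection at the walls and transmission across $\Sigma$, uniqueness by a difference estimate---matches the paper's. The gap is in the linearization you choose and the estimate you claim from it. You freeze the \emph{entire} source $K_i^\pm(\bm Z_*^\pm,\w B_0^\pm,\w A_0^\pm)$ at the previous iterate. Subtracting the background equation (which the paper records as $\lambda_{ib}^\pm\p_{y_2}Z_{ib}^\pm=-1/\w u_b^\pm=K_i^\pm(\bm Z_b^\pm,\w B_b^\pm,A_b^\pm)$) shows that the effective forcing for $\hat{\bm Z}^\pm=\bm Z^\pm-\bm Z_b^\pm$ is
\[
K_i^\pm(\bm Z_*^\pm,\w B_0^\pm,\w A_0^\pm)-\lambda_i^\pm(\bm Z_*^\pm)\,\p_{y_2}Z_{ib}^\pm
= H_i^\pm(\hat{\bm Z}_*^\pm,\hat B^\pm,\hat A^\pm),
\]
which vanishes at the background but is only $O(\delta+\sigma)$, not $O(\delta^2+\sigma)$. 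Your claimed bound $\|\bm Z^\pm-\bm Z_b^\pm\|_{1,\alpha}\le\mc(\sigma+\delta^2)$ therefore does not follow from your scheme, and the self-mapping computation you wrote (taking $\mc_3\ge 2\mc$, $\sigma_3\le(2\mc\mc_3)^{-1}$) collapses: with a linear-in-$\delta$ forcing you would need the propagation constant itself to be $<1$, which is not available for arbitrary $L$.

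The paper repairs exactly this point: it does not freeze the full source but instead extracts the Jacobian of $H_i^\pm$ at the background and carries it as a zero-order term $\mathbb D^\pm\bar{\bm Z}^\pm$ in the \emph{unknown} of the linear problem $(\mathbf{LP})$, leaving a remainder $\bm F^\pm$ that is genuinely $O(\delta^2+\sigma_{cd})$ (their (3.38)). The linear problem then has coupled zero-order terms, which is why the characteristic representations (4.7), (4.31), (4.53) all contain $\sum_i d_{ji}^\pm \bar Z_i^\pm$ inside the integrals and require a Gronwall step rather than a bare integral bound; this is what produces $\|\bar{\bm Z}^\pm\|_{1,\alpha}\le\mc_\flat(\delta^2+\sigma_{cd})$ and closes the Schauder argument. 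A minor aside: the invertibility of the transmission system on $\Sigma$ does not hinge on $A_b^+\ne A_b^-$; the coefficient matrix has determinant $\alpha+\beta>0$ simply because $\p_P\Lambda>0$.
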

\subsection{Linearization}\noindent
\par In order to solve nonlinear boundary value problem $ ({\mathbf{P}})$,    we first linearize \eqref{3-28} and then solve
the linear equation in the  domain $ \mn $.
\par Define
\begin{equation}\label{3-33}
\begin{aligned}
\hat  {\bm Z}^\pm=\bm Z^\pm-\bm Z_{b}^\pm, \quad
\hat B^\pm=\w B_{0}^\pm- \w B_{b}^\pm, \quad \hat A^\pm=\w A_{0}^\pm-  A_{b}^\pm.
\end{aligned}
\end{equation}
Note that the background solutions satisfy
\begin{equation}\label{3-34}
\lambda_{2b}^\pm\p_{y_2}Z_{2b}^\pm=-\frac{1}{\w u_b^\pm}, \quad \lambda_{1b}^\pm\p_{y_2}Z_{1b}^\pm=-\frac{1}{\w u_b^\pm},
\end{equation}
where
\begin{equation*}
\lambda_{2b}^\pm=-\lambda_{1b}^\pm=\frac{\w\rho_b^\pm \w u_{b}^\pm \w c_b^\pm}{
\sqrt{(\w u_{b}^\pm)^2-(\w c_b^\pm)^2}}>0.
\end{equation*}
Then $ (\hat {\bm Z}^-,\hat {\bm Z}^+) $ satisfies
\begin{equation}\label{3-35}
  ({\mathbf{P}} )\quad  \begin{cases}
    \p_{y_1} \hat{ \bm Z}^-+diag(\lambda_1^-,\lambda_2^-)\p_{y_2} \hat{ \bm Z}^-=  {\bm H}^-,
    &\quad {\rm in} \quad\mn^-,\\
    \p_{y_1} \hat{ \bm Z}^++diag(\lambda_1^+,\lambda_2^+)\p_{y_2} \hat{ \bm Z}^+={\bm H}^+,
    &\quad {\rm in} \quad\mn^+,\\
    \hat{ \bm Z}^-= \hat{ \bm Z}_{0}^-,&\quad {\rm on} \quad \Sigma_{0}^-,\\
      \hat{ \bm Z}^+= \hat{ \bm Z}_{0}^+,&\quad {\rm on} \quad \Sigma_{0}^+,\\
      \hat  Z_1^-+ \hat  Z_2^-=2\arctan{(g^-)^{\prime}},&\quad {\rm on} \quad \Sigma_w^-,\\
       \hat Z_1^++ \hat  Z_2^+=2\arctan{(g^+)^{\prime}},&\quad {\rm on} \quad \Sigma_w^+,\\
       \hat Z_1^-+ \hat  Z_2^-=\hat  Z_1^++ \hat Z_2^+, &\quad {\rm on} \quad \Sigma,\\
      \alpha\hat Z_2^-+\beta \hat Z_1^+ = \alpha  \hat Z_1^-+\beta  \hat Z_2^++\mq, &\quad {\rm on} \quad \Sigma,\\
      \end{cases}
    \end{equation}
     where
    \begin{equation*}
    \begin{aligned}
    &\lambda_i^\pm=\lambda_i^\pm({ \bm Z}^\pm,\w B_{0}^\pm,\w A_{0}^\pm),i=1,2,\\
    & {\bm H}^\pm=( H_1^\pm,H_2^\pm)^\top( \hat{ \bm Z}^\pm,\hat B_{0}^\pm,\hat A_{0}^\pm) ,\\
    & H_1^\pm( \hat{ \bm Z}^\pm,\hat B_{0}^\pm,\hat A_{0}^\pm)
  =K_1^\pm+\frac{\lambda_1^\pm}{\lambda_{1b}^\pm\w u_b^\pm},\quad
   H_2^\pm( \hat{ \bm Z}^\pm,\hat B_{0}^\pm,\hat A_{0}^\pm)
  =K_2^\pm+\frac{\lambda_2^\pm}{\lambda_{2b}^\pm\w u_b^\pm},\\
    &\hat {\bm Z}_{0}^\pm=(\hat Z_{10}^\pm,\hat Z_{20}^\pm)^\top,\quad \hat Z_{i0}^\pm=Z_{i0}^\pm- Z_{ib}^\pm, \quad i=1,2, \\
    &\alpha=\alpha(y_1)=\frac{1}{2\int_0^1\p_{s}\Lambda(
    \w P_b^-+s(P^--\w P_b^-),\w B_{0}^-,\w A_{0}^-)\de s}(y_1,0),\\
  & \beta=\beta(y_1)=\frac{1}{2\int_0^1\p_{s}\Lambda(
    \w P_b^++s(P^+-\w P_b^+),\w B_{0}^+,\w A_{0}^+)\de s}(y_1,0),\\
   &  \mq=\left(P(\bm Z_{b}^-,\w B_{b}^-, A_{b}^-)
    -P(\bm  Z_{b}^-,\w B_{0}^-,\w A_{0}^-)\right.\\
    &\quad\quad\quad\left.+P(\bm Z_{b}^+,\w B_{0}^+,\w A_{0}^+)
    -P(\bm Z_{b}^+,\w B_{b}^+, A_{b}^+)\right)(0).\\
    \end{aligned}
    \end{equation*}
   % Moreover, we have
  %  \begin{equation}\label{3-39}
 % \hat H_{-}^{(i)}(  Z_0^{(i)},
 %   \w B_0^{(i)}, A_0^{(i)})=\hat  H_{+}^{(i)}(  Z_0^{(i)},
  %  \w B_0^{(i)},A_0^{(i)})=0.
  %\end{equation}
\par
  Define an iteration set as
    \begin{equation}\label{3-36}
    \begin{aligned}
    \mj_\delta:&=\left\{( \hat{\bm Z}^-, \hat{\bm Z}^+):   \|\hat{\bm Z}^-\|_{1,\alpha;\overline{\mn^-}}
+\| \hat{\bm Z}^+\|_{1,\alpha;\overline{\mn^+}}\leq \delta\right\}
\end{aligned}
\end{equation}
  with  a positive constant $ \delta$    to be determined later.  Given
 $  {\bm  Z}^\pm=\hat {\bm  Z}^\pm+{\bm  Z}_b^\pm $, where  $(\hat {\bm  Z}^-,\hat   {\bm  Z}^+)\in \mj_\delta$, find $(  \e{\bm  Z}^-,  \e{\bm  Z}^+)$ by solving  the following linear boundary value problem  $(\mathbf{LP})$:
  \begin{equation}\label{3-37}
  ({\mathbf{LP}} )\quad  \begin{cases}
    \p_{y_1}  \Z^-+diag(\lambda_1^-,\lambda_2^-)\p_{y_2}
    \Z^--\mathbb{D}^- \Z^-
    ={ \bm F}^-,
    &\quad {\rm in} \quad\mn^-,\\
    \p_{y_1} \Z^++diag(\lambda_1^+,\lambda_2^+)\p_{y_2}
    \Z^+-\mathbb{D}^+ \Z^+={ \bm F}^+,
    &\quad {\rm in} \quad\mn^+,\\
     \Z^-= \hat{\bm Z}_{0}^-,&\quad {\rm on} \quad \Sigma_{0}^-,\\
      \Z^+= \hat{\bm Z}_{0}^+,&\quad {\rm on} \quad \Sigma_{0}^+,\\
       \e Z_1^-+  \e Z_2^-=2\arctan{(g^-)^{\prime}},&\quad {\rm on} \quad \Sigma_w^-,\\
      \e Z_1^++  \e Z_2^+=2\arctan{(g^+)^{\prime}},&\quad {\rm on} \quad \Sigma_w^+,\\
      \e Z_1^-+  \e Z_2^-= \e Z_1^++  \e Z_2^+, &\quad {\rm on} \quad \Sigma,\\
      \alpha  \e Z_2^-+\beta \e Z_1^+ = \alpha \e  Z_1^-+\beta \e Z_2^++\mq, &\quad {\rm on} \quad \Sigma,\\
      \end{cases}
    \end{equation}
    where
    \begin{equation*}
    \begin{aligned}
    &\lambda_i^\pm=\lambda_i^\pm({ \bm Z}^\pm,\w B_{0}^\pm,\w A_{0}^\pm),i=1,2,\\
     &\mathbb{D}^\pm=\left(\begin{array}{ccc}
 d_{11}^\pm& d_{12}^\pm\\
d_{21}^\pm &  d_{22}^\pm\\
\end{array}\right)(\bm Z_b^\pm,
    \w B_b^\pm,A_b^\pm),\\
&d_{11}^\pm(y_2)=\p_{ Z_1^\pm}  H_1^\pm(\bm Z_b^\pm,
    \w B_b^\pm,A_b^\pm),\quad  d_{12}^\pm(y_2)=\p_{Z_2^\pm}  H_1^\pm(\bm Z_b^\pm,
    \w B_b^\pm, A_b^\pm),\\
     &d_{21}^\pm(y_2)=\p_{ Z_1^\pm} H_2^\pm(\bm Z_b^\pm,
    \w B_b^\pm,A_b^\pm),\quad  d_{22}^\pm(y_2)=\p_{Z_2^\pm}  H_2^\pm(\bm Z_b^\pm,
    \w B_b^\pm,A_b^\pm),\\
   & { \bm F}^\pm=\left(\begin{array}{ccc}
 F_1^\pm \\
 F_2^\pm\\
\end{array}\right) ( \hat{ \bm Z}^\pm,\hat B_{0}^\pm,\hat A_{0}^\pm),\\
     & F_1^\pm( \hat{ \bm Z}^\pm,\hat B_{0}^\pm,\hat A_{0}^\pm)= H_1^\pm-d_{11}^\pm\hat  Z_1^\pm-d_{12}^\pm\hat  Z_2^\pm,\\
      & F_2^\pm( \hat{ \bm Z}^\pm,\hat B_{0}^\pm,\hat A_{0}^\pm)= H_2^\pm-d_{21}^\pm\hat  Z_1^\pm-d_{22}^\pm\hat  Z_2^\pm.\\
     \end{aligned}
     \end{equation*}
   \par Use the abbreviation
    \begin{equation*}
    \sigma_{cd}=\sigma(\bm V_0^-,\bm V_0^+,g^-,g^+),
    \end{equation*}
  where   $ \sigma(\bm V_0^-,\bm V_0^+,g^-,g^+)$  is defined in \eqref{3-30}.
     Then   a direct computation yields
      \begin{equation}\label{3-38}
    \begin{aligned}
      &\sum_{j=1}^2\sum_{i=1}^2\sum_{I=\pm}\|d_{ji}^I\|
      _{1,\alpha;\overline{\mn^I}}
      \leq C,\\
     &\sum_{i=1}^2\sum_{I=\pm}\| F^I_i\|_{1,\alpha;\overline{\mn^I}}
     \leq C(\delta^2+ \sigma_{cd}),
\end{aligned}
     \end{equation}
     where $ C>0$ depends only on the data.
     \par Next, we establish the estimate of the eigenvalues.
     \begin{lemma}
      For $  (\hat{ \bm Z}^-,\hat{ \bm Z}^+ )\in \mj_\delta$, there  exist positive constants $\sigma_4>0$ and $ \mc_4>0 $   depending only on the data  such that  for $  \sigma_{cd}\leq\delta \leq\sigma_4$ with $ \sigma_4 $ sufficiently small, one has
       \begin{equation}\label{3-39}
\sum_{i=1}^2\sum_{I=\pm}\|\lambda_i^I\|
_{1,\alpha;\overline{\mn^I}}
\leq \mc_4.
\end{equation}
\end{lemma}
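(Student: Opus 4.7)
The plan is to view $\lambda_i^\pm$ as smooth composite functions of $(\bm Z^\pm,\w B_0^\pm,\w A_0^\pm)$ and then apply a standard H\"older composition estimate. From \eqref{3-22}, \eqref{3-26}, and the implicit representation $P=P(\bm Z,\w B_0,\w A_0)$ furnished by \eqref{3-27}, each of $u_1^\pm$, $u_2^\pm$, $\rho^\pm=(P^\pm/\w A_0^\pm)^{1/\gamma}$, and $c^\pm=\sqrt{\gamma\w A_0^\pm(\rho^\pm)^{\gamma-1}}$ can be written as a real-analytic function of $(\bm Z^\pm,\w B_0^\pm,\w A_0^\pm)$ on an open neighbourhood $\mathcal{V}^\pm$ of the background state $(\bm Z_b^\pm,\w B_b^\pm,A_b^\pm)$. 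The formulas for $\lambda_i^\pm$ displayed just after \eqref{3-17} are then smooth on $\mathcal{V}^\pm$, provided the Bernoulli positivity $(\gamma-1)\w B_0>\gamma\w A_0^{1/\gamma}P^{(\gamma-1)/\gamma}$, the supersonic condition $u_1^2+u_2^2>c^2$, the transversality $u_1^2-c^2>0$, and the angular condition $|Z_1+Z_2|<\pi$ (needed to keep $\tan((Z_1+Z_2)/2)$ finite) all hold on $\mathcal{V}^\pm$.

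Next I would verify that each of these four inequalities is strict at the background. The first three follow from the construction in Section~2.2, in particular from the supersonic requirement $0<\gamma(A_b^\pm)^{1/\gamma}\bar P^{(\gamma-1)/\gamma}-(\gamma-1)\int_0^{x_2}u_b^\pm(s)\,ds<(u_b^\pm)^2$, while the fourth is trivial since $Z_{1b}^\pm+Z_{2b}^\pm\equiv 0$. Hence $\mathcal{V}^\pm$ may be chosen so that all four inequalities hold on $\mathcal{V}^\pm$ with a uniform gap depending only on the data, and on $\overline{\mathcal{V}^\pm}$ each partial derivative of the map $\Phi_i^\pm:(\bm Z,\w B,\w A)\mapsto\lambda_i$ up to second order is bounded by a constant $C_\star$ depending only on the data.

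Then I would choose $\sigma_4>0$ small enough that $\delta\le\sigma_4$ and $\sigma_{cd}\le\sigma_4$ force the image of $(\bm Z^\pm,\w B_0^\pm,\w A_0^\pm)$ over $\overline{\mn^\pm}$ to lie inside $\mathcal{V}^\pm$; this uses the embedding $C^{1,\alpha}\hookrightarrow L^\infty$ applied to $\|\hat{\bm Z}^\pm\|_{1,\alpha}\le\delta$ together with $\|\w B_0^\pm-\w B_b^\pm\|_{1,\alpha}+\|\w A_0^\pm-A_b^\pm\|_{1,\alpha}\le\sigma_{cd}$. Finally, the elementary composition inequality
\begin{equation*}
\|\Phi_i^\pm(\bm Z^\pm,\w B_0^\pm,\w A_0^\pm)\|_{1,\alpha;\overline{\mn^\pm}}\le C(C_\star,\alpha)\bigl(1+\|\bm Z^\pm\|_{1,\alpha;\overline{\mn^\pm}}+\|\w B_0^\pm\|_{1,\alpha}+\|\w A_0^\pm\|_{1,\alpha}\bigr),
\end{equation*}
together with $\|\bm Z^\pm\|_{1,\alpha}\le\|\bm Z_b^\pm\|_{1,\alpha}+\delta\le C$ and analogous bounds for $\w B_0^\pm$, $\w A_0^\pm$, yields \eqref{3-39}.

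The main obstacle is conceptual rather than computational: one must ensure that a single choice of $\sigma_4$ simultaneously secures \emph{all four} structural inequalities on $\mathcal{V}^\pm$ with a uniform gap, and that every intermediate constant depends only on the data rather than on $\delta$ or $\sigma_{cd}$. Once the algebra of the Bernoulli inversion is in place, this reduces to a clean continuity argument based on the $L^\infty$ bounds, and no delicate cancellation is required.
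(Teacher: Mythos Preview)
Your proposal is correct and follows essentially the same approach as the paper: both arguments exploit the fact that $\lambda_i^\pm$ are smooth functions of $(\bm Z^\pm,\w B_0^\pm,\w A_0^\pm)$ on a neighbourhood of the background, use smallness of $\delta$ and $\sigma_{cd}$ to confine the perturbed states to that neighbourhood, and then invoke a H\"older composition bound. The paper phrases the last step as a difference estimate $\|\lambda_i^+-\lambda_{ib}^+\|_{1,\alpha}\le C^+(\delta+\sigma_{cd})$ followed by the triangle inequality, whereas you bound $\|\lambda_i^\pm\|_{1,\alpha}$ directly; these are equivalent, and your explicit enumeration of the structural inequalities (Bernoulli positivity, supersonic condition, $u_1^2>c^2$, $|Z_1+Z_2|<\pi$) makes transparent what the paper hides behind ``a simple calculation shows''. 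One minor point: the composition inequality you record is linear in $\|\bm Z^\pm\|_{1,\alpha}$ etc., but the $[D(\Phi\circ g)]_\alpha$ term actually produces a quadratic contribution $\|D^2\Phi\|_{L^\infty}\|Dg\|_{C^0}^2$; this is harmless here since all inputs are uniformly bounded by data-dependent constants, but the stated form should be adjusted accordingly.
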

\begin{proof}
    Without loss of generality, we only consider the  estimate of  eigenvalues in $ \mn^+ $.  First, a simple calculation shows

\begin{equation}\label{3-40}
\begin{aligned}
&\sum_{i=1}^2\|\lambda_i^+( { \bm Z}^+,\w B_{0}^+,\w A_{0}^+)-\lambda_{ib}^+(\bm Z_b^+,
    \w B_b^+, A_b^+)\|
_{1,\alpha;\overline{\mn^+}}\\
&\leq C^+
\left(
\|\hat{  \bm Z}^{+}\|_{1,\alpha;\overline{\mn^+}}
+\| \hat  B^+\|_{1,\alpha; [0,m^+]}+
      \| \hat  A^+\|_{1,\alpha; [0,m^+]}\right)\\
&\leq C^+
(\delta+ \sigma_{cd}),
 \end{aligned}
\end{equation}
where $ C^+>0 $ depends only the data. Furthermore, there  exists a positive constant $ \Lambda^+ $    such  that
$\sum_{i=1}^2\|\lambda_{ib}^+\|_{1,\alpha;\overline{\mn^+}}\leq \Lambda^+$.
   Let $\sigma_4^+=\frac{\Lambda^+}{4C^+(C^++1)} $. Then if  $ \sigma_{cd}\leq\delta \leq\sigma_4^+ $, one has
   \begin{equation}\label{3-41}
\sum_{i=1}^2\|\lambda_i^+\|
_{1,\alpha;\overline{\mn^+}}
\leq 2\Lambda^+.
\end{equation}
 Similarly, there exist  positive constants $   \Lambda^- $  and $\sigma_4^-$   such that for $ \sigma_{cd}\leq \delta\leq \sigma_4^- $, one gets
   \begin{equation}\label{3-42}
\sum_{i=1}^2\|\lambda_i^-\|
_{1,\alpha;\overline{\mn^-}}
\leq 2\Lambda^-.
\end{equation}
Let $ \sigma_4=\min\{\sigma_4^+,\sigma_4^-\} $. We complete the proof of this Lemma.
    \end{proof}
      \section{The solution to the linear boundary value problem  $(\mathbf{LP})$ }\noindent
    \par In this section, we prove the following theorem.
\begin{theorem}
     For given $ ( \hat{ \bm Z}^-, \hat{ \bm Z}^+)\in \mj_\delta$, the problem $({\mathbf{LP}} )$ has a unique solution  $( \Z^-,  \Z^+)$ satisfying
    \begin{equation}\label{4-1}
\|\Z^-\|_{1,\alpha;\overline{\mn^-}}
+\| \Z^+\|_{1,\alpha;\overline{\mn^+}}
\leq \mc_\flat(\delta^2+ \sigma_{cd}),
\end{equation}
where   $\mc_\flat>0$ depends only on the data.
\end{theorem}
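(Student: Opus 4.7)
The plan is to construct $(\bar{\bm Z}^-,\bar{\bm Z}^+)$ by the method of characteristics, combined with a Picard iteration that decouples the linear $2\times 2$ system $(\mathbf{LP})$. Because the eigenvalues $\lambda_i^\pm$ and coefficients $d_{ij}^\pm$ depend only on the frozen iterate $\bm Z^\pm\in \bm Z_b^\pm+\mj_\delta$ and on the background, $(\mathbf{LP})$ is a linear hyperbolic system with prescribed smooth coefficients. At each Picard step I freeze the cross-coupling $d_{12}^\pm\bar Z_2^\pm$ and $d_{21}^\pm\bar Z_1^\pm$ at the previous iterate, reducing one step to four uncoupled scalar transport equations along the characteristic families $dy_2/dy_1=\lambda_i^\pm$. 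By \eqref{3-38} the inhomogeneous source is of size $O(\delta^2+\sigma_{cd})$.

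In each region $\mn^\pm$ the sign structure $\lambda_1^\pm<0<\lambda_2^\pm$ (from the background bound $\lambda_{2b}^\pm=-\lambda_{1b}^\pm>0$ together with Lemma~3.5 for $\sigma_4$ small) gives a clear propagation picture: in $\mn^-$ the invariant $\bar Z_1^-$ is swept down from $\Sigma\cup\Sigma_0^-$ and $\bar Z_2^-$ is swept up from $\Sigma_w^-\cup\Sigma_0^-$; the mirror statement holds in $\mn^+$. For a point $(y_1,y_2)$ I trace the two backward characteristics and handle three cases. If a footpoint lies on $\Sigma_0^\pm$, the entrance data $\hat{\bm Z}_0^\pm$ is used directly. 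If it lies on $\Sigma_w^\pm$, the wall relation $\bar Z_1^\pm+\bar Z_2^\pm=2\arctan(g^\pm)'$ expresses the outgoing invariant in terms of the incoming one. If it lies on $\Sigma$, the two matching conditions form a $2\times2$ linear system for the outgoing pair $(\bar Z_1^-,\bar Z_2^+)$ in terms of the incoming pair $(\bar Z_2^-,\bar Z_1^+)$; its determinant equals $-(\alpha+\beta)$, uniformly bounded away from zero by $\partial_P\Lambda>0$ in \eqref{3-27}. Since $|\lambda_i^\pm|$ is bounded above and below on a compact set disjoint from $0$, each characteristic undergoes only finitely many reflections off $\Sigma_w^\pm$ and $\Sigma$ before exiting at $y_1=0$ or $y_1=L$.

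The $C^{1,\alpha}$ estimate \eqref{4-1} is obtained by integrating the ODE along each arc and summing contributions across successive reflections. The source $\bm F^\pm$ contributes $O(\delta^2+\sigma_{cd})$ by \eqref{3-38}; the entrance data $\hat{\bm Z}_0^\pm$, the wall data $\arctan(g^\pm)'$ through $\|g^\pm\mp 1\|_{2,\alpha}$, and the constant $\mq$ — which is $O(\sigma_{cd})$ by a Taylor expansion of $P$ about $(\w B_b^\pm, A_b^\pm)$ — each contribute $O(\sigma_{cd})$. $C^0$ and $C^{0,\alpha}$ bounds follow from direct integration plus Gronwall, using that the characteristic map depends $C^{1,\alpha}$-smoothly on its footpoint because $\lambda_i^\pm\in C^{1,\alpha}$; $C^{1,\alpha}$ bounds are obtained by differentiating the characteristic ODE along and transverse to the characteristic direction and running the same scheme on $\nabla\bar{\bm Z}^\pm$, using that the entrance data is $C^{1,\alpha}$ and the wall and matching data are $C^{1,\alpha}$ with one extra derivative available on the boundary. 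This closes the iteration in a ball of radius $\mc_\flat(\delta^2+\sigma_{cd})$ in $C^{1,\alpha}$ and yields existence together with the estimate; uniqueness follows by applying the same estimate to the difference of two solutions, which solves $(\mathbf{LP})$ with zero data and $\bm F=0$.

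The principal technical obstacle is preserving $C^{1,\alpha}$ regularity across the multiple reflections. At each reflection the tangential derivative of $\bar{\bm Z}^\pm$ is dictated by the boundary condition while the normal derivative is recovered from the equation, and one must verify that these jump relations neither degrade the Hölder exponent nor accumulate uncontrolled constants when a single characteristic bounces between $\Sigma_w^\pm$ and $\Sigma$ many times. This requires a careful case-by-case bookkeeping of arcs for each initial position and each characteristic family, which is precisely the case analysis flagged in the introduction and to which Section~4 is devoted.
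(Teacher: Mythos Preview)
Your proposal is correct and follows essentially the same approach as the paper: backward characteristics with the sign structure $\lambda_1^\pm<0<\lambda_2^\pm$, reflection relations at $\Sigma_w^\pm$, the $2\times2$ matching system on $\Sigma$ with determinant $\pm(\alpha+\beta)\neq 0$, finitely many reflections, and Gronwall-type $C^{1,\alpha}$ estimates obtained by differentiating along and across characteristics. The paper organizes the same argument by an explicit subdomain decomposition $\mn_1,\mn_2,\mn_3,\mn_4$ cut out by the four corner characteristics, proving separate Propositions for the pure initial-value region, the wall-reflection region, and the contact-discontinuity region, and then iterating this block finitely many times in $y_1$; your unified ``trace back through successive reflections'' description is the same mechanism without naming the subdomains.
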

\par We   divide the domain $\mn $ into several parts  to prove Theorem 4.1. Let $ \zeta_{2}^+(y_1)=\psi_2^+(y_1;0,0) $  be the characteristic curve corresponding to $ \lambda_2^+ $ starting from point $(0, 0)$ and  $\zeta_{1}^+(y_1)=\psi_1^+(y_1;0,m^+) $  be the characteristic curve corresponding to $ \lambda_1^+ $ starting from point $(0, m^+)$, i.e,
    \begin{equation*}
    \left\{
 \begin{aligned}
 &\frac{\de \psi_2^+}{\de y_1}=\lambda_2^+(y_1,\psi_2^+(y_1;0,0)),\\
 &\psi_2^+(0;0,0)=0,
 \end{aligned}
 \right. \quad {\rm{and}} \quad
 \left\{
 \begin{aligned}
 &\frac{\de \psi_1^+}{\de y_1}=\lambda_1^+(y_1,\psi_1^+(y_1;0,m^+)),\\
 &\psi_1^+(0;0,m^+)=m^+.
 \end{aligned}
 \right.
    \end{equation*}
    Thus  one gets
\begin{equation}\label{4-2}
\zeta_{2}^+(y_1)=\int_{0}^{y_1}\lambda_2^+(s,\psi_2^+(s;0,0))\de s,\  {\rm{and}} \quad
\zeta_{1}^+(y_1)=m^++\int_{0}^{y_1}\lambda_1^+(s,\psi_1^+
(s;0,m^+))\de s.
\end{equation}
Let $ \zeta_{2}^-(y_1)=\psi_2^-(y_1;0,-m^-) $  be the characteristic curve corresponding to $ \lambda_2^- $ starting from point $(0, -m^-)$ and  $\zeta_{1}^-(y_1)=\psi_1^-(y_1;0,0) $  be the characteristic curve corresponding to $ \lambda_1^- $ starting from point $(0, 0)$, i.e,
    \begin{equation*}
    \left\{
 \begin{aligned}
 &\frac{\de \psi_2^-}{\de y_1}=\lambda_2^-(y_1,\psi_2^-(y_1;0,-m^-)),\\
 &\psi_2^+(0;0,-m^-)=-m^-,
 \end{aligned}
 \right. \quad {\rm{and}} \quad
 \left\{
 \begin{aligned}
 &\frac{\de \psi_1^-}{\de y_1}=\lambda_1^-(y_1,\psi_1^-(y_1;0,0)),\\
 &\psi_1^-(0;0,0)=0.
 \end{aligned}
 \right.
    \end{equation*}
    Hence  it holds that
\begin{equation}\label{4-3}
\zeta_{2}^-(y_1)=-m^-+\int_{0}^{y_1}\lambda_2^-(s,\psi_2^-
(s;0,-m^-))\de s,\ {\rm{and}} \quad
\zeta_{1}^-(y_1)=\int_{0}^{y_1}\lambda_1^-(s,\psi_1^-
(s;0,0))\de s.
\end{equation}
\par As shown in Fig 1,  the sub-domains are determined as follows. Let $ \mn_1$ be the union of two triangles $ \mn_1^- $ and $ \mn_1^+ $ which are bounded by the entrance $ \Sigma_{0}^- $ (or $ \Sigma_{0}^+ $), $\zeta_{1}^-$ (or $\zeta_{1}^+$),  and $\zeta_{2}^-$ (or $\zeta_{2}^+)$. Let $ \mn_2$ be the union of two triangles $ \mn_2^- $ and $ \mn_2^+ $ bounded by the  walls $ y_2=-m^- $ (or $ y_2=m^+ $), $\zeta_{1}^-$ (or $\zeta_{1}^+$), and $\zeta_2^-$ (or $\zeta_{2}^+$). Let $ \mn_3 $ be the diamond bounded by $\zeta_{1}^\pm$ and $\zeta_{2}^\pm$. Let $\mn_4 $ be the union of two diamonds bounded by $\zeta_{1}^\pm$, $\zeta_{2}^\pm$, the characteristic corresponding to $ \lambda_1^+ $ starting from the intersection point of $\zeta_{2}^+$ and $ y_2=m^+ $, and the characteristic corresponding to $ \lambda_2^- $ starting from the intersection point of $\zeta_{1}^-$ and $ y_2=-m^- $.
\begin{figure}
  \centering
  % Requires \usepackage{graphicx}
  \includegraphics[width=12cm,height=4cm]{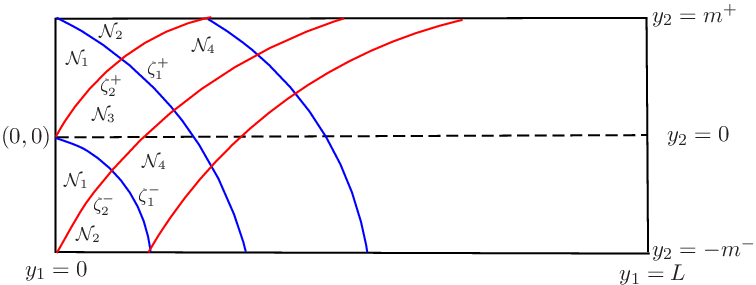}
  \caption{the solution of  $({\mathbf{LP}} )$}
\end{figure}
\subsection{The solution to the boundary value problem  $({\mathbf{LP}} )$ in $ \mn_1$}\noindent
\par In this subsection, we investigate  the problem $({\mathbf{LP}} )$ in $ \mn_1 $. Note that $ \mn_1$  is bounded by the entrance and the  characteristics curves. Hence the problem   $({\mathbf{LP}} )$ in $ \mn_1 $ can be regarded as the following initial value problem $({\mathbf{LP}})_1$:
 \begin{equation}\label{4-4}
 ({\mathbf{LP}})_1\ \ \begin{cases}
   \p_{y_1}  \Z^-+diag(\lambda_1^-,\lambda_2^-)\p_{y_2}
   \Z^--\mathbb{D}^- \Z^-
    ={ \bm F}^-,
    &\quad {\rm in} \quad\mn^-,\\
    \p_{y_1} \Z^++diag(\lambda_1^+,\lambda_2^+)\p_{y_2}
    \Z^+-\mathbb{D}^+ \Z^+={ \bm F}^+,
    &\quad {\rm in} \quad\mn^+,\\
      \Z^-=\hat{ \bm Z}_{0}^-,&\quad {\rm on} \quad \Sigma_{0}^-,\\
      \Z^+=\hat{ \bm Z}_{0}^+,&\quad {\rm on} \quad \Sigma_{0}^+.
      \end{cases}
    \end{equation}
    \par   Let $ (\xi_\ast^+,\eta_\ast^+)  $ be the  intersection point of the characteristic curves $ \zeta_{1}^+$ and $\zeta_{2}^+$, and let $ (\xi_\ast^-,\eta_\ast^-)  $ be the intersection point of the characteristic curves $\zeta_{1}^-$ and  $ \zeta_{2}^-$.
     Then $ \mn_1^\pm $ can be described as
     \begin{equation*}
     \begin{cases}
     \begin{aligned}
     &\mn_1^+=\{(y_1,y_2):0\leq y_1\leq \xi_\ast^+, \zeta_{2}^+(y_1)\leq y_2\leq  \zeta_{1}^+(y_1)\},\\
     &\mn_1^-=\{(y_1,y_2):0\leq y_1\leq \xi_\ast^-, \zeta_{2}^-(y_1)\leq y_2 \leq  \zeta_{1}^-(y_1)\}.
     \end{aligned}
     \end{cases}
     \end{equation*}
       Given any point $ (y_1,y_2)\in \mn_1$,  let $ \varphi^\pm_{i}(s;y_1,y_2) $ $(i=1,2) $ be the characteristic curves corresponding to $ \lambda^\pm_i $ starting from point $(y_1,y_2)$, i.e.,
    \begin{equation}\label{4-5}
 \begin{cases}
 \frac{\de \varphi^\pm_{i}}{\de s}
 =\lambda^\pm_{i}(s,\varphi^\pm_{i}(s;y_1,y_2)), \quad {\rm{for}} \quad 0\leq s\leq y_1,\\
 \varphi^\pm_{i}(y_1;y_1,y_2)=y_2.
 \end{cases}
    \end{equation}
    \par  For any point $ (y_1,y_2)$ in $\mn_1^+ $ (or  in $\mn_1^- $),  it follows from \eqref{4-5}  that the characteristic associated with $ \lambda_i^+ $ (or $ \lambda_i^-$) starting from $ (y_1,y_2) $  has a unique intersection point  $(0, \eta_{i}^+)$ (or $(0, \eta_{i}^-)$)  with  the entrance (See Fig 2 and Fig 3). Hence $ \eta_{i}^\pm $  can be regarded as functions of $ (y_1,y_2)$.
     \begin{figure}

\begin{minipage}{0.44\textwidth}
  \centering
  % Requires \usepackage{graphicx}
  \includegraphics[width=6.5cm,height=3cm]{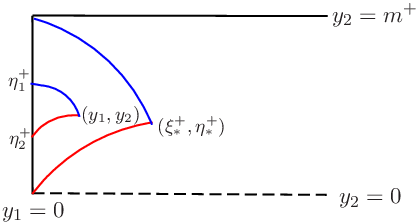}
  \caption{the  solution of  $({\mathbf{LP}})_1$ in $\mn_1^+$ }
  \end{minipage}
 \begin{minipage}{0.44\textwidth}
  \centering
  % Requires \usepackage{graphicx}
  \includegraphics[width=6.5cm,height=3cm]{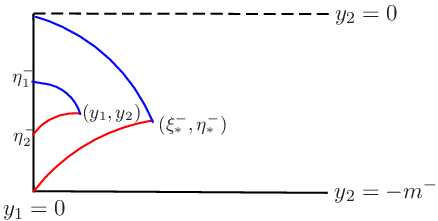}
  \caption{the  solution of  $({\mathbf{LP}})_1$ in $\mn_1^-$}
   \end{minipage}
   \end{figure}
    For convenience, we only consider the  problem $({\mathbf{LP}})_1$ in $ \mn^+$.
    The  problem $({\mathbf{LP}})_1$ in $ \mn^-$ can be treated in the same way.
    \par  Next, we consider the following problem:
    \begin{equation}\label{4-6}
    \begin{cases}
\p_{y_1} \Z^++diag(\lambda_1^+,\lambda_2^+)\p_{y_2}
    \Z^+-\mathbb{D}^+ \Z^+={ \bm F}^+,&\quad {\rm on} \quad \mn^+\\
   \Z^+=\hat{ \bm Z}_{0}^+,&\quad {\rm on} \quad \Sigma_{0}^+.
      \end{cases}
    \end{equation}
    For any given point $(y_1,y_2)\in \mn_1^+$, along the characteristic curve $\varphi_{i}^+(s;y_1,y_2) $, there holds
     \begin{equation}\label{4-7}
     \begin{cases}
     \begin{aligned}
    & \e Z_1^+(y_1,y_2)
    =  \hat Z_{10}^+(\eta_{1}^+)
    +\int_{0}^{y_1}\left(\sum_{i=1}^2d_{1i}^+\e Z_i^++ F_1^+\right)(s,\varphi_{1}^+(s; y_1,y_2))\de s,\\
    & \ \e Z_2^+(y_1,y_2)=\hat  Z_{20}^+(\eta_{2}^+)
    +\int_{0}^{y_1}\left(\sum_{i=1}^2d_{2i}^+\e Z_i^++  F_2^+\right)(s,\varphi_{2}^+(s;y_1,y_2))\de s.
    \end{aligned}
    \end{cases}
    \end{equation}
   \par By Picard iteration in \cite{LY85}, the integral equation \eqref{4-7} has a unique solution $ \Z^+\in C^{1}(\overline{\mn_1^+}) $. Thus the Cauchy problem \eqref{4-6}  has a unique solution $ \Z^+\in C^{1}(\overline{\mn_1^+}) $. In the following, we establish the estimate of the solution.
    \begin{proposition}
       For $ (\hat{ \bm Z}^-,\hat{ \bm Z}^+)\in \mj_\delta $,  there exist a   positive constant  $ \mc_{5}^+ $  depending only on the data  such that   the solution $  \Z^+$ to the problem \eqref{4-6} satisfies
    \begin{equation}\label{4-8}
    \| \Z^+\|_{1,\alpha;\overline {\mn_1^+}}
    \leq \mc_5^+\left(\| \hat{  \bm Z}_{0}^+\|_{1,\alpha; [0,m^+]}+\|{ \bm F}^+\|_{1,\alpha;\overline{\mn^+}}\right)
\leq \mc_5^+(\sigma_{cd}+\delta^2).
    \end{equation}
\end{proposition}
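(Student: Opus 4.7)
The plan is to start from the characteristic integral representation \eqref{4-7} and run a bootstrap argument taking $\Z^+$ from $C^0$ up to $C^{1,\alpha}$ control, exploiting that the coefficients $\lambda_i^+$ and $\mathbb{D}^+$ are already $C^{1,\alpha}$-bounded on $\overline{\mn^+}$ by Lemma 3.5 and \eqref{3-38}, and that $\mn_1^+$ is a triangle whose horizontal extent is bounded by $\xi_\ast^+ \le L$.

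First I would establish existence and a $C^0$ estimate. Since the kernel of the Volterra-type equation \eqref{4-7} is the bounded, lower-triangular-in-$y_1$ operator determined by $\mathbb{D}^+$, Picard iteration in $C^0(\overline{\mn_1^+})$ converges after finitely many compositions, giving a unique continuous solution. A standard Grönwall argument applied to $s \mapsto |\e Z^+(s, \varphi_i^+(s; y_1, y_2))|$ yields
\begin{equation*}
\|\e Z^+\|_{C^0(\overline{\mn_1^+})} \le C\bigl(\|\hat{\bm Z}_0^+\|_{C^0([0,m^+])} + \|\bm F^+\|_{C^0(\overline{\mn^+})}\bigr).
\end{equation*}
To pass to $C^\alpha$, I would subtract \eqref{4-7} at two points $(y_1, y_2)$ and $(y_1', y_2')$, split the integrals into common and residual parts, and absorb the comparison of $\varphi_i^+(\cdot; y_1, y_2)$ with $\varphi_i^+(\cdot; y_1', y_2')$ by invoking the $C^{1,\alpha}$ bound on $\lambda_i^+$ from Lemma 3.5 together with standard ODE-dependence-on-initial-data estimates; a second Grönwall closes the loop.

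For the $C^1$ estimate, I would differentiate \eqref{4-7} directly in $(y_1, y_2)$. The derivatives $\partial_{y_j}\varphi_i^+(s; y_1, y_2)$ and $\partial_{y_j}\eta_i^+(y_1, y_2)$ satisfy linear variational ODEs whose coefficients are built out of $\partial_{y_2}\lambda_i^+$, hence are bounded in $C^{0,\alpha}$ uniformly by $\mc_4$. The differentiated form of \eqref{4-7} becomes another Volterra-type system for $\nabla\e Z^+$ with forcing involving $\partial\hat{\bm Z}_0^+$, $\nabla\bm F^+$, and the already-controlled $\e Z^+$; one more Grönwall gives the $C^1$ bound. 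Repeating the Hölder subtraction on the differentiated formula upgrades to $C^{1,\alpha}$. Feeding in \eqref{3-38} and the definition of $\sigma_{cd}$ then produces the claimed bound $\mc_5^+(\sigma_{cd} + \delta^2)$.

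I expect the main obstacle to be the Hölder regularity of the characteristic flow $(y_1, y_2) \mapsto (\varphi_i^+(\cdot; y_1, y_2), \eta_i^+(y_1, y_2))$ as a function of its terminal data, because the eigenvalue $\lambda_i^+ = \lambda_i^+(\bm Z^+, \w B_0^+, \w A_0^+)$ is only guaranteed to have $C^{1,\alpha}$ norm at most $\mc_4$ by Lemma 3.5, and this regularity must propagate through the variational equation without loss. Once this ODE-dependence estimate is in hand, the remaining manipulations inside the triangle $\mn_1^+$ reduce to routine Volterra--Grönwall bookkeeping, and the symmetric treatment in $\mn_1^-$ follows verbatim.
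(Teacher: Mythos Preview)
Your proposal is correct and follows essentially the same route as the paper: characteristic integral representation, Gr\"onwall for $C^0$, differentiation plus Gr\"onwall for $C^1$, and a two-point H\"older subtraction using $C^1$ bounds on the flow maps $\varphi_i^+$, $\eta_i^+$ (the paper's estimate \eqref{4-18}) for the $C^{1,\alpha}$ seminorm. The only cosmetic differences are that the paper omits your intermediate $C^\alpha$ stage and differentiates the PDE \eqref{4-6} (yielding \eqref{4-12}--\eqref{4-13}) rather than the integral formula \eqref{4-7} before re-integrating along characteristics; the eigenvalue bound you invoke is the paper's Lemma~3.6, not 3.5.
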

\begin{proof}
 For $ 0\leq s\leq  \xi_{\ast}^+ $, define $  \overline{\mn_1^+}(s)=\{0\leq w_1\leq s, \zeta_{2}^+(w_1)\leq w_2\leq  \zeta_{1}^+(w_1)\} $ and set
     \begin{equation*}
     \begin{cases}
     \begin{aligned}
     \mz^+(s)&=\sum_{i=1}^2\sup _{ \bm w=(w_1,w_2)\in   \overline{\mn_1^+}(s)}|  \e Z_i^+(\bm w)|,\\
     D \mz^+(s)&=\sum_{i=1}^2\sup _{\bm w=(w_1,w_2)\in   \overline{\mn_1^+}(s)}|  D\e Z_i^+(\bm w)|,\\
     [D \mz^+]_\alpha(s)&=\sum_{i=1}^2\sup _{\substack{\bm w=(w_1,w_2)\in   \overline{\mn_1^+}(s)\\\w {\bm w} =(\w w_1,\w w_2)\in  \overline{ \mn_1^+}(s)}}
     d^{-\alpha}(\bm w,\w {\bm w})|  D \e Z_i^+( {\bm w})- D \e Z_i^+(\w {\bm w})|,
     \end{aligned}
     \end{cases}
     \end{equation*}
    where
     \begin{equation*}
      d(\bm w,\w {\bm w})=|w_1-\w w_1|+|w_2-\w w_2|.
      \end{equation*}
      Obviously,
     \begin{equation}\label{4-9}
     \|
    \Z^+\|_{1,\alpha;\overline{\mn_1^+}}=\sup_{ 0\leq s\leq  \xi_{\ast}^+} \mz^+(s)+\sup_{0\leq s\leq \xi_{\ast}^+}
      D \mz^+(s)+\sup_{ 0\leq s\leq  \xi_{\ast}^+} [D \mz^+]_\alpha(s).
    \end{equation}
    \par   Next,  we divide the proof into two steps.
       \par $\textbf{Step 1} $. The estimate of $ \|
    \Z^+\|_{0,0;\overline{\mn_1^+}} $.    For $ 0\leq y_1\leq \xi_{\ast}^+ $,  it follows from \eqref{4-7} that
    \begin{equation}\label{4-10}
    \mz^+(y_1)
    \leq C_{11}\left(\|\hat{  \bm Z}_{0}^+\|_{0,0; [0,m^+]}+\|{ \bm F}^+\|_{0,0;\overline{\mn_1^+} }\right)+C_{11}\int_{0}^{y_1} \mz^+(s)\de s,\\
     \end{equation}
      where  $  C_{11}>0 $   depends only on the data. Applying the  Gronwall inequality yields
    \begin{equation}\label{4-11}
   \| \Z^+\|_{0,0;\overline{\mn_1^+}}
        \leq C_{11} e^{ C_{11}L}\left(\| \hat{  \bm Z}_{0}^+\|_{1,\alpha; [0,m^+]}+\|{  \bm F}^+\|_{1,\alpha;\overline{\mn^+}}\right).
\end{equation}
\par $\textbf{Step 2} $. The estimate of $ \|D
    \Z^+\|_{0,\alpha;\overline{\mn^+_1}} $. First,
    differentiating \eqref{4-6}  with respect to $ y_1 $ and $ y_2 $ to obtain that
      \begin{equation}\label{4-12}
      \begin{cases}
      \begin{aligned}
      &\p_{y_1}(\p_{y_j} \e Z_1^+)+\lambda_1^+\p_{y_2}(\p_{y_j} \e Z_1^+)-\sum_{i=1}^2d_{1i}^+\p_{y_j}\e Z_i^+\\
      &=\p_{y_j} F_1^+-\p_{y_j}\lambda_1^+\p_{y_2} \e Z_1^++\sum_{i=1}^2\p_{y_j}d_{1i}^+\e Z_i^+,\\
      &\p_{y_1}(\p_{y_j} \e Z_2^+)+\lambda_2^+\p_{y_2}(\p_j \e Z_2^+)-\sum_{i=1}^2d_{2i}^+\p_{y_j}\e Z_i^+\\
      &=\p_{y_j} F_2^+-\p_{y_j}\lambda_2^+\p_{y_2} \e Z_2^++\sum_{i=1}^2\p_{y_j}d_{2i}^+\e Z_i^+,\\
      \end{aligned}
       \end{cases}
       \end{equation}
       where $ \p_{y_1}d_{ij}^+=0, (i,j=1,2 ) $.
      Then along the characteristic curve $ \varphi_{i}^+(s;y_1,y_2) $, one gets
       \begin{equation}\label{4-13}
       \begin{cases}
       \begin{aligned}
        &\p_{y_j} \e Z_1^+(y_1,y_2)\\
       &=\p_{y_j} \e Z_1^+(0,\eta_1^+)
    +\int_{0}^{y_1}\sum_{i=1}^2d_{1i}^+\p_{y_j}\e Z_i^+(s,\varphi_{1}^+(s;y_1,y_2))\de s \\
     &\quad +\int_{0}^{y_1}\left(\p_{y_j} F_1^+-\p_{y_j}\lambda_1^+\p_{y_2} \e Z_1^++\sum_{i=1}^2\p_{y_j}d_{1i}^+\e Z_i^+\right)(s,\varphi_{1}^+(s;y_1,y_2))\de s,\\
 &\p_{y_j} \e Z_2^+(y_1,y_2)\\
     &=\p_{y_j} \e Z_2^+(0,\eta_2^+)
    +\int_{0}^{y_1}\sum_{i=1}^2d_{2i}^+\p_{y_j}\e Z_i^+(s,\varphi_{2}^+(s;y_1,y_2))\de s \\
    &\quad +\int_{0}^{y_1}\left(\p_{y_j} F_2^+-\p_{y_j}\lambda_2^+\p_{y_2} \e Z_2^++\sum_{i=1}^2\p_{y_j}d_{2i}^+\e Z_i^+\right)(s,\varphi_{2}^+(s;y_1,y_2))\de s.
     \end{aligned}
     \end{cases}
      \end{equation}
  Furthermore, by the boundary condition  in   $ \eqref{4-6}  $, one can derive that
      \begin{equation}\label{4-15}
      \begin{cases}
      \begin{aligned}
      \p_{y_2}\e Z_1^+(0,\eta_1^+)=(\hat{  Z}_{10}^+)^\prime(\eta_1^+),\quad
      \p_{y_1}\e Z_1^+(0,\eta_1^+)=\left(-\lambda_1^+(\hat{  Z}_{10}^+)^\prime+\sum_{i=1}^2d_{1i}^+\hat{  Z}_{i0}^++F_1^+\right)(0,\eta_1^+),\\
      \p_{y_2}\e Z_2^+(0,\eta_2^+)=(\hat{  Z}_{20}^+)^\prime(\eta_2^+),\quad
      \p_{y_1}\e Z_2^+(0,\eta_2^+)=\left(-\lambda_2^+(\hat{  Z}_{20}^+)^\prime+\sum_{i=1}^2d_{2i}^+\hat{  Z}_{i0}^++F_2^+\right)(0,\eta_2^+).
      \end{aligned}
       \end{cases}
       \end{equation}
       For $ 0\leq y_1\leq \xi_{\ast}^+ $, it follows from
       \eqref{4-13}-\eqref{4-15} that
     \begin{equation}\label{4-16}
     \begin{aligned}
     D \mz^+(y_1)
    \leq C_{12}\int_{0}^{y_1}D\mz^+(s)\de s
    +C_{12}\left(\| \hat{\bm Z}_{0}^+\|_{1,\alpha; [0,m^+]}+\|{  \bm F}^+\|_{1,\alpha;\overline{\mn^+}}
     \right),\\
      \end{aligned}
       \end{equation}
    where  $  C_{12}>0 $   depends only on the data.   By the  Gronwall inequality, there holds
    \begin{equation}\label{4-17}
   \|D \Z^+\|_{0,0;\overline{\mn^+_1}}
        \leq C_{12} e^{C_{12}L}\left(\| \hat{  \bm Z}_{0}^+\|_{1,\alpha; [0,m^+]}+\|{  \bm F}^+\|_{1,\alpha;\overline{\mn^+}}\right).
      \end{equation}
      \par Next, to obtain the  estimate of $ [D \Z^+]_{0,\alpha;\overline{\mn^+_1}} $, we first establish the following estimate:
      \begin{equation}\label{4-18}
 \sum_{i=1}^2\left(\|\varphi_{i}^+(s;\cdot,\cdot)\|_{1;\overline{\mn_1^+(s)}}
+\|\eta_{i}^+\|
_{1;\overline{\mn_1^+}}\right)
\leq C,
\end{equation}
where $ C>0 $ depends only on the data but is  independent of $ s $.
 \par     Differentiating  $\eqref{4-5} $ with respect to $ y_1 $ and $ y_2 $ to get
   \begin{equation*}
   \begin{cases}
    \begin{aligned}
 &\frac{\p \varphi_{i}^+(s;y_1,y_2)}{\p y_2}
 =e^{\int_{y_1}^{s}\p_{y_2}\lambda_i^+
 (\mu,\varphi_{i}^+(\mu;y_1,y_2))\de \mu},\\
 &\frac{\p \varphi_{i}^+(s;y_1,y_2)}{\p y_1}
 =-\lambda_i^+(y_1,y_2)e^{\int_{y_1}^{s}\p_{y_2}\lambda_i^+
 (\mu,\varphi_{i}^+(\mu;y_1,y_2))\de \mu}.\\
 \end{aligned}
 \end{cases}
      \end{equation*}
      Note that
        \begin{equation}\label{4-19}
  \eta_{i}^+=y_2-\int_{0}^{y_1}\lambda_i^+(s, \varphi_{i}^+(s;y_1,y_2))\de s.
  \end{equation}
  Then taking derivatives \eqref{4-19} with respect to $ y_1 $ and $ y_2 $ to obtain
    \begin{equation*}
    \begin{cases}
    \begin{aligned}
    &\frac{\p \eta_{i}^+}{\p y_2}=1-\int_{0}^{y_1}\p_{y_2}\lambda_i^+
    e^{\int_{y_1}^{s}\p_{y_2}\lambda_i^+(\mu,\varphi_{i}^+(\mu;y_1,y_2)) \de \mu}\de s,\\
     &\frac{\p \eta_{i}^+}{\p y_1}=\int_{0}^{y_1}\lambda_i^+\p_{y_2}\lambda_i^+
    e^{\int_{y_1}^{s}\p_{y_2}\lambda_i^+(\mu,\varphi_{i}^+(\mu;y_1,y_2)) \de \mu}\de s-\lambda_i^+(y_1,y_2).\\
    \end{aligned}
    \end{cases}
      \end{equation*}
       Thus one can infer that
 \begin{equation*}
 \sum_{i=1}^2\left(\|\varphi_{i}^+(s;\cdot,\cdot)\|_{1;\overline{\mn_1^+(s)}}
+\|\eta_{i}^+\|
_{1;\overline{\mn_1^+}}\right)
\leq C.
 \end{equation*}
 \par  For any two points $   \bm Q=(y_{1Q},y_{2Q}) $ and $ \bm P=(y_{1P},y_{2P})\in \mn^+_1 $,
       let $ (0,\eta_{i}^+(\bm\ell)) $ $( \ell= P,  Q) $  be the intersection point of the characteristic associated with $ \lambda_i^+ $ staring from point $(y_{1\ell},y_{2\ell})$ and the entrance $ \Sigma_{0}^+ $. Without loss of generality, we assume that $ y_{1P}\leq y_{1Q} $.
     \par Define
       \begin{equation*}
d(\bm P,\bm Q)=|y_{1P}-y_{1Q}|+|y_{2P}-y_{2Q}|.
    \end{equation*}
     Note that
  \begin{equation*}
    \begin{aligned}
    &d^{-\alpha}(\bm P,\bm Q)\left(|D \e Z_1^+(\bm P)-D \e Z_1^+(\bm Q)|+|D \e Z_2^+(\bm P)-D \e Z_2^+(\bm Q)|\right)\\
    &\quad\leq d^{-\alpha}(\bm P,\bm Q)\left(|\p_{y_1} \e Z_1^+(\bm P)-\p_{y_1} \e Z_1^+(\bm Q)|+|\p_{y_2} \e Z_1^+(\bm P)-\p_{y_2} \e Z_1^+(\bm Q)|\right)\\
    &\quad\quad+d^{-\alpha}(\bm P,\bm Q)\left(|\p_{y_1} \e Z_2^+(\bm P)-\p_{y_1} \e Z_2^+(\bm Q)|+|\p_{y_2} \e Z_2^+(\bm P)-\p_{y_2} \e Z_2^+(\bm Q)|\right)\\
    &=J_{11}^++J_{12}^++J_{21}^++J_{22}^+.
       \end{aligned}
   \end{equation*}
     For convenience, we only estimate $ J_{11}^+ $, the other terms can be estimated in the same way.
      The straightforward
computations give
      \begin{equation}\label{4-20}
    \begin{aligned}
   J_{11}^+&\leq d^{-\alpha}(\bm P, \bm Q)|\p_{y_1} \e Z_1^+(0,\eta_1^+(\bm  P))-\p_{y_1} \e Z_1^+(0,\eta_1^+(\bm  Q))|\\
   &\quad+d^{-\alpha}(\bm P, \bm Q)\int_{0}^{y_{1P}}\left|\left(\sum_{i=1}^2
    d_{1i}^+
   \p_{y_1}\e Z_i^++\p_{y_1}  F_1^+-\p_{y_1}\lambda_1^+\p_{y_2} \e Z_1^+\right)
    (s,\varphi_{1}^+(s;y_{1P},y_{2P}))\right.\\
    &\qquad\qquad\qquad\qquad\quad\left.-\left(\sum_{i=1}^2
    d_{1i}^+
   \p_{y_1}\e Z_i^++\p_{y_1}  F_1^+-\p_{y_1}\lambda_1^+\p_{y_2} \e Z_1^+\right)
     (s,\varphi_{1}^+(s;y_{1Q},y_{2Q}))\right|\de s\\
    &\quad+ d^{-\alpha}(\bm P,\bm Q)\int_{y_{1P}}^{y_{1Q}}\left|\left(\sum_{i=1}^2d_{1i}^+\p_{y_1}\e Z_i^+
     +\p_{y_1}  F_1^+-\p_{y_1}\lambda_1^+\p_{y_2}Z_1^+ \right)
    (s,\varphi_{1}^+(s;y_{1Q},y_{2Q}))\right|\de s\\
     & = J_{111}^++J_{112}^++J_{113}^+.
   \end{aligned}
   \end{equation}
 For the first term, one derives
   \begin{equation*}
    \begin{aligned}
    J_{111}^+&= d^{-\alpha}(\bm P, \bm Q)|\p_{y_1} \e Z_1^+(0,\eta_1^+(\bm  P))-\p_{y_1} \e Z_1^+(0,\eta_1^+(\bm  Q))|\\
   &=d^{-\alpha}(\bm P, \bm Q)\left|\left(-\lambda_1^+(\hat{  Z}_{10}^+)^\prime+\sum_{i=1}^2d_{1i}^+\hat{  Z}_{i0}^++F_1^+\right)(0,\eta_1^+(\bm  P))\right.\\
    &\qquad\qquad\qquad -\left.\left(-\lambda_1^+(\hat{  Z}_{10}^+)^\prime+\sum_{i=1}^2d_{1i}^+\hat{  Z}_{i0}^++F_1^+\right)(0,\eta_1^+(\bm  Q))\right|\\
   &\leq \frac{{|\eta_{1}^+(\bm P)-\eta_{1}^+(\bm Q)|^\alpha}}{d^{\alpha}(\bm P,\bm Q)}
    \frac{|(\lambda_1^+(\hat{  Z}_{10}^+)^\prime+ F_1^+)(0,\eta_{1}^+(\bm  P))-(\lambda_1^+(\hat{  Z}_{10}^++ F_1^+)^\prime)(0,\eta_{1}^+(\bm  Q))|}{|\eta_{1}^+(\bm P)-\eta_{1}^+(\bm Q)|^\alpha}\\
    &\quad+\frac{{|\eta_{1}^+(\bm P)-\eta_{1}^+(\bm Q)|^\alpha}}{d^{\alpha}(\bm P,\bm Q)}
    \frac{|\left(\sum_{i=1}^2d_{1i}^+\hat Z_{i0}^+\right)(0,\eta_1^+(\bm  P))-\left(\sum_{i=1}^2d_{1i}^+\hat Z_{i0}^+\right)(0,\eta_1^+(\bm  Q))|}{|\eta_{1}^+(\bm P)-\eta_{1}^+(\bm Q)|^\alpha}\\
&\leq C\left(\|{ \bm F}^+\|_{1,\alpha;\overline{\mn^+_1}}+ \| \hat  {\bm Z}_{0}^+\|_{1,\alpha; [0,m^+]}\right).\\
\end{aligned}
      \end{equation*}
      For the second term, one has
      \begin{equation*}
    \begin{aligned}
J_{112}^+&=d^{-\alpha}(\bm P, \bm Q)\int_{0}^{y_{1P}}\left|\left(\sum_{i=1}^2
    d_{1i}^+
   \p_{y_1}\e Z_i^++\p_{y_1}  F_1^+-\p_{y_1}\lambda_1^+\p_{y_2} \e Z_1^+\right)
    (s,\varphi_{1}^+(s;y_{1P},y_{2P}))\right.\\
    &\qquad\qquad\qquad\qquad\left.-\left(\sum_{i=1}^2
    d_{1i}^+
   \p_{y_1}\e Z_i^++\p_{y_1}  F_1^+-\p_{y_1}\lambda_1^+\p_{y_2} \e Z_1^+\right)
     (s,\varphi_{1}^+(s;y_{1Q},y_{2Q}))\right|\de s\\
     &\leq \int_{0}^{y_{1P}}
     \frac{\left|\sum_{i=1}^2
    d_{1i}^+
   \p_{y_1}\e Z_i^+
   (s,\varphi_{1}^+(s;y_{1P},y_{2P}))-\sum_{i=1}^2
    d_{1i}^+
   \p_{y_1}\e Z_i^+
     (s,\varphi_{1}^+(s;y_{1Q},y_{2Q}))\right|}
     {|\varphi_{1}^+(s;y_{1P},y_{2P})-\varphi_{1}^+(s;y_{1Q},y_{2Q})|}
     \times\\
&\qquad\qquad\frac{|\varphi_{1}^+(s;y_{1P},y_{2P})-\varphi_{1}^+(s;y_{1Q},y_{2Q})|
}{d^{\alpha}(\bm P,\bm Q)} \de s\\
&\quad+\int_{0}^{y_{1P}}
    \frac{\left|\p_{y_1} F_1^+ (s,\varphi_{1}^+(s;y_{1P},y_{2P}))-\p_{y_1} F_1^+(s,\varphi_{1}^+(s;y_{1Q},y_{2Q}))\right|} {|\varphi_{1}^+(s;y_{1P},y_{2P})-\varphi_{1}^+(s;y_{1Q},y_{2Q})|^\alpha}\times
    \\
    &\qquad\qquad\quad\frac {|\varphi_{1}^+(s;y_{1P},y_{2P})-\varphi_{1}^+(s;y_{1Q},y_{2Q})|^\alpha}
    {d^{\alpha}(\bm P,\bm Q)}
    \de s\\
    \end{aligned}
      \end{equation*}
       \begin{equation*}
    \begin{aligned}
    &\quad\quad+\int_{0}^{y_{1P}}\frac{\left|\p_{y_1}\lambda_1^+\p_{y_2} \e Z_1^+ (s,\varphi_{1}^+(s;y_{1P},y_{2P}))-\p_{y_1}\lambda_1^+\p_{y_2} \e Z_1^+(s,\varphi_{1}^+(s;y_{1Q},y_{2Q}))\right|} {|\varphi_{1}^+(s;y_{1P},y_{2P})-\varphi_{1}^+(s;y_{1Q},y_{2Q})|^\alpha}\times
    \\
    &\qquad\qquad\quad\frac {|\varphi_{1}^+(s;y_{1P},y_{2P})-\varphi_{1}^+(s;y_{1Q},y_{2Q})|^\alpha}
    {d^{\alpha}(\bm P,\bm Q)}
    \de s\\
     &\leq C\left(\|{ \bm F}^+\|_{1,\alpha;\overline{\mn^+_1}}+\| \Z^+\|_{1,0;\overline{\mn^+_1}}\right)\\
&\quad+C\int_{0}^{y_{1P}}\frac{\left|\p_{y_2} \e Z_1^+ (s,\varphi_{1}^+(s;y_{1P},y_{2P}))-\p_{y_2} \e Z_1^+(s,\varphi_{1}^+(s;y_{1Q},y_{2Q}))\right|} {|\varphi_{1}^+(s;y_{1P},y_{2P})-\varphi_{1}^+(s;y_{1Q},y_{2Q})|^\alpha}\de s\\
 &\quad+C\int_{0}^{y_{1P}}
     \frac{\sum_{i=1}^2\left|\p_{y_1}\e Z_i^+ (s,\varphi_{1}^+(s;y_{1P},y_{2P}))-\p_{y_1}\e Z_i^+ (s,\varphi_{1}^+(s;y_{1Q},y_{2Q}))\right|}
     {|\varphi_{1}^+(s;y_{1P},y_{2P})-\varphi_{1}^+(s;y_{1Q},y_{2Q})|^\alpha}
      \de s\\
 &\leq C\left(\|{ \bm F}^+\|_{1,\alpha;\overline{\mn^+_1}}+\| \Z^+\|_{1,0;\overline{\mn^+_1}}\right)
+C\int_{0}^{y_{1P}}[D\mz^+]_\alpha(s)\de s.\\
 \end{aligned}
      \end{equation*}
      For the last term, one obtains
      \begin{equation*}
    \begin{aligned}
J_{113}^+&= d^{-\alpha}(\bm P,\bm Q)\int_{y_{1P}}^{y_{1Q}}\left|\left(\sum_{i=1}^2d_{1i}^+\p_{y_1}\e Z_i^+
     +\p_{y_1}  F_1^+-\p_{y_1}\lambda_+^+\p_{y_2} \e Z_1^+\right)
    (s,\varphi_{1}^+(s;y_{1Q},y_{2Q}))\right|\de s\\
    &\leq  C\left(\|{ \bm F}^+\|_{1,\alpha;\overline{\mn^+_1}}+\| \Z^+\|_{1,0;\overline{\mn^+_1}}\right).\\
    \end{aligned}
   \end{equation*}
 Collecting the above estimates and \eqref{4-11} and \eqref{4-17} gives that
\begin{equation}\label{4-21}
    \begin{aligned}
    J_{11}^+\leq
      C\int_{0}^{y_{1P}}[D\mz^+]_\alpha(s)\de s
     +C\left(\|   \hat{\bm Z}_{0}^+\|_{1,\alpha; [0,m^+]}+\| { \bm F}^+\|_{1,\alpha;\overline{\mn^+_1}}\right).\\
      \end{aligned}
    \end{equation}
    Similarly, one has
    \begin{equation}\label{4-22}
    \begin{aligned}
   J_{12}^++J_{21}^++J_{22}^+\leq
      C\int_{0}^{y_{1P}}[D\mz^+]_\alpha(s)\de s
     +C\left(\|   \hat{\bm Z}_{0}^+\|_{1,\alpha; [0,m^+]}+\| { \bm F}^+\|_{1,\alpha;\overline{\mn^+_1}}\right).\\
     \end{aligned}
    \end{equation}
    For  $ 0\leq y_{1P}\leq y_{1Q}\leq y_1\leq \xi_{\ast}^+  $, it follows from \eqref{4-21} and \eqref{4-22} that
    \begin{equation}\label{4-23}
     [D \mz^+]_\alpha(y_1)
     \leq C_{13}\int_{0}^{y_1}[D\mz^+]_\alpha(s)\de s
    +C_{13}\left(\|   \hat{\bm Z}_{0}^+\|_{1,\alpha; [0,m^+]}+\|  {\bm F}^+\|_{1,\alpha;\overline{\mn^+}}
     \right),\\
     \end{equation}
    where  $  C_{13}>0 $   depends only on the data.  By the Gronwall inequality, there holds
    \begin{equation}\label{4-24}
   [ D\Z^+]_{0,\alpha;\overline{\mn_1^+}}
        \leq C_{13} e^{ C_{13}L}\left(\| \hat{  \bm Z}_{0}^+\|_{1,\alpha; [0,m^+]}+\| { \bm F}^+\|_{1,\alpha;\overline{\mn^+}}\right).
        \end{equation}
        Let $ \mc_5^+=C_{11} e^{ C_{11}L}+C_{12} e^{ C_{12}L}+C_{13} e^{ C_{13}L} $. Combining \eqref{4-11}, \eqref{4-17} and \eqref{4-26} yields that
        \begin{equation*}
    \| \Z^+\|_{1,\alpha;\overline{\mn_1^+}}
    \leq \mc_{5}^+\left(\|   \hat{\bm Z}_{0}^+\|_{1,\alpha; [0,m^+]}+\| {\bm F}^+\|_{1,\alpha;\overline{\mn^+}}\right)
\leq \mc_5^+(\sigma_{cd}+\delta^2).
    \end{equation*}
    \end{proof}
     \subsection{   The solution to the boundary value problem  $({\mathbf{LP}} )$ in $ \mn_1\cup\mn_2$}\noindent
  \par In this subsection, we investigate  the problem $({\mathbf{LP}} )$ with the boundary conditions on $ \Sigma_w^- $ and   $ \Sigma_w^+ $ respectively.  The problem $({\mathbf{LP}} )$  can be regarded as the following problem $({\mathbf{LP}})_2$:
   \begin{equation}\label{4-25}
  ({\mathbf{LP}} )_2\quad  \begin{cases}
    \p_{y_1}  \Z^-+diag(\lambda_1^-,\lambda_2^-)\p_{y_2}
    \Z^--\mathbb{D}^- \Z^-
    ={ \bm F}^-,
    &\quad {\rm in} \quad\mn^-,\\
    \p_{y_1} \Z^++diag(\lambda_1^+,\lambda_2^+)\p_{y_2}
    \Z^+-\mathbb{D}^+ \Z^+= {\bm F}^+,
    &\quad {\rm in} \quad\mn^+,\\
      \Z^-= \hat{\bm Z}_{0}^-,&\quad {\rm on} \quad \Sigma_{0}^-,\\
      \Z^+= \hat{\bm Z}_{0}^+,&\quad {\rm on} \quad \Sigma_{0}^+,\\
       \e Z_1^-+  \e Z_2^-=2\arctan{(g^-)^{\prime}},&\quad {\rm on} \quad \Sigma_w^-,\\
       \e Z_1^++  \e Z_2^+=2\arctan{(g^+)^{\prime}},&\quad {\rm on} \quad \Sigma_w^+.\\
     \end{cases}
    \end{equation}
    This problem can be solved  by similar arguments in subsection 4.1 using the reflection of the characteristic curves by the upper and lower walls.
      \par  Let $ \xi_{\ast\ast}^+  $ be the  intersection point of the characteristic curve $ \zeta_{2}^+ $  and the upper wall $ m^{+} $, and let $ \xi_{\ast\ast}^-  $ be the  intersection point of the characteristic curve $ \zeta_{1}^- $ and the  lower wall $ -m^{-} $, i.e,
      \begin{equation}\label{4-26}
      \int_{0}^{\xi_{\ast\ast}^+}
      \lambda_2^+(s,\psi_2^+(s;0,0))\de s=m^+, \ \
       \int_{0}^{\xi_{\ast\ast}^-}\lambda_1^-(s,\psi_1^-(s;0,0))\de s=-m^-.
       \end{equation}
       Then $ \mn_2^\pm $ can be described as
     \begin{equation*}
     \begin{cases}
     \begin{aligned}
     \mn_2^-&=\{0\leq y_1\leq \xi_{\ast}^-,
     -m^-\leq y_2\leq  \zeta_{2}^-(y_1)\}\cup
     \{\xi_{\ast}^-\leq y_1\leq \xi_{\ast\ast}^-, -m^-\leq y_2 \leq  \zeta_{1}^-(y_1)\},\\
     \mn_2^+&=\{0\leq y_1\leq \xi_{\ast}^+, \zeta_{1}^+(y_1)\leq y_2
     \leq m^+\}\cup
     \{\xi_{\ast}^+\leq y_1\leq \xi_{\ast\ast}^+, \zeta_{2}^+(y_1)\leq y_2
     \leq m^+\}.
     \end{aligned}
     \end{cases}
     \end{equation*}
      Given any point $ (y_1,y_2)\in  \mn_2 $, let $ \varphi^\pm_{i}(s;y_1,y_2) $ $ (i=1,2) $ be the characteristic curves associated with $ \lambda^\pm_i $ passing from the point
    $(y_1,y_2) $, i.e., defined by
     \begin{equation}\label{4-27}
     \begin{cases}
     \frac{\de \varphi^\pm_{i}(s;y_1,y_2)}{\de s}={\lambda^\pm_i(s,\varphi^\pm_{i}(s;y_1,y_2))}, \quad {\rm{for}} \quad 0\leq s\leq y_1,\\
     \varphi^\pm_{i}(y_1;y_1,y_2)=y_2.
     \end{cases}
     \end{equation}
     \par For any point $ (y_1,y_2)\in \mn_2^+ $, \eqref{4-27} implies that there exist unique $ \xi_{1}^+ $ and $\eta_{2}^+ $ such that the characteristic curve  associated with $ \lambda_1^+ $ starting from  $(y_1,y_2)$ has a unique intersection point
     $ (\xi_{1}^+,m^+) $ with the upper wall and the  characteristic curve  associated with  $ \lambda_2^+$ starting from  $(y_1,y_2)$   has a unique intersection point
     $ (0,\eta_{2}^+) $ with the  entrance $ \Sigma_{0}^+$. Thus we can regard  $ \xi_{1}^+ $ and $\eta_{2}^+ $ as functions of $ (y_1,y_2) $. Obviously, $ 0\leq \xi_{1}^+<y_1\leq \xi_{\ast\ast}^+ $ and $ 0\leq \eta_{2}^+\leq m^+ $. Moreover, due to the  reflection of the characteristic by the upper wall, there exists a unique $\eta_{1}^+ $ such that characteristic curve associated with  $ \lambda_2^+ $ passing from  $ (\xi_{1}^+,m^+) $  has a unique intersection point
     $ (0,\eta_{1}^+) $ with  the entrance $ \Sigma_{0}^+$ (See Fig 4).
       \par  Similarly, for any point $ (y_1,y_2)\in \mn_2^- $,  there exist unique $ \xi_{2}^- $ and $ \eta_{1}^-$ such that the characteristic curve corresponding to $ \lambda_2^- $ starting from $(y_1,y_2)$  has a unique intersection point
     $ (\xi_{2}^-,-m^-) $ with the  lower wall and the characteristic curve corresponding to $ \lambda_1^- $ starting from $(y_1,y_2)$ has a unique intersection point
     $ (0,\eta_{1}^-) $ with the entrance $ \Sigma_{0}^-$. Hence $ \xi_{2}^- $ and $\eta_{1}^- $ can be regarded as functions of $ (y_1,y_2) $ and   $ 0\leq \xi_{2}^-<y_1\leq \xi_{\ast\ast}^- $ and $ -m^-\leq \eta_{1}^-\leq 0 $. Furthermore,
     due to the  reflection of the characteristic by the lower wall, there exists a unique $\eta_{2}^- $ such that characteristic curve corresponding to $ \lambda_1^- $ passing from  $ (\xi_{2}^-,-m^-) $   has a unique intersection point
     $ (0,\eta_{2}^- ) $ with the entrance $ \Sigma_{0}^-$(See Fig 5).
     \begin{figure}

\begin{minipage}{0.44\textwidth}
  \centering
  % Requires \usepackage{graphicx}
  \includegraphics[width=7cm,height=3.5cm]{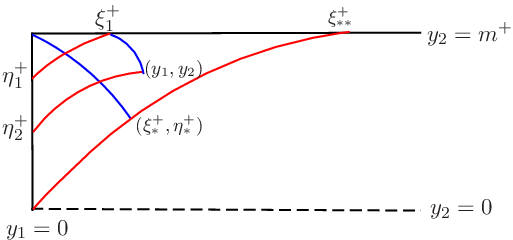}
  \caption{the  solution of  $({\mathbf{LP}})_2$ in $\mn_2^+$}
  \end{minipage}
 \begin{minipage}{0.44\textwidth}
  \centering
  % Requires \usepackage{graphicx}
  \includegraphics[width=7cm,height=3.3cm]{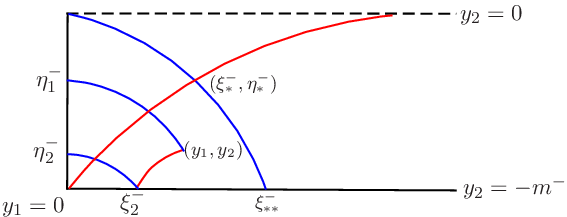}
  \caption{the  solution of  $({\mathbf{LP}})_2$ in $\mn_2^-$}
   \end{minipage}
   \end{figure}
\par For convenience, we only consider the  problem $({\mathbf{LP}})_2$ in $ \mn^+$.
    The  problem $({\mathbf{LP}})_2$ in $ \mn^-$ can be treated in the same way.
     Now, we consider the following problem:
       \begin{equation}\label{4-28}
    \begin{cases}
\p_{y_1} \Z^++diag(\lambda_1^+,\lambda_2^+)\p_{y_2}
    \Z^+-\mathbb{D}^+ \Z^+= {\bm F}^+,
    &\quad {\rm in} \quad\mn^+,\\
     \Z^+= \hat{\bm Z}_{0}^+,&\quad {\rm on} \quad \Sigma_{0}^+,\\
       \e Z_1^++  \e Z_2^+=2\arctan{(g^+)^{\prime}},&\quad {\rm on} \quad \Sigma_w^+.\\
     \end{cases}
    \end{equation}
    For any given point $(y_1,y_2)\in \mn_2^+$, along the characteristic curve  $\varphi_{i}^+(s;y_1,y_2)$, it holds that
     \begin{equation}\label{4-29}
     \begin{cases}
\begin{aligned}
    \e Z_1^+(y_1,y_2)
    &=  \e Z_1^+(\xi_{1}^+,m^+)+\int_{\xi_{1}^+}^{y_1}
    \left(\sum_{i=1}^2d_{1i}^+\e Z_i^++   F_1^+\right)(s,\varphi_{1}^+(s;y_1,y_2))\de s\\
   &= \left(2\arctan{(g^+)^{\prime}}- \e Z_2^+\right)(\xi_{1}^+,m^+)
  +\int_{\xi_{1}^+}^{y_1}
    \left(\sum_{i=1}^2d_{1i}^+\e Z_i^++  F_1^+\right)(s,\varphi_{1}^+(s;y_1,y_2))\de s,\\
 \e Z_2^+(y_1,y_2)
     &=  \hat Z_{20}^+(\eta_{2}^+)
  +\int_{0}^{y_1}
    \left(\sum_{i=1}^2d_{2i}^+\e Z_i^++   F_2^+\right)(s,\varphi_{2}^+(s;y_1,y_2))\de s.\\
    \end{aligned}
    \end{cases}
   \end{equation}
   Moreover,  the   characteristic curve corresponding to $ \lambda_2^+ $ passing from $(\xi_{1}^+,m^+)$ has a unique intersection point
     $ (0,\eta_{1}^+) $ with the entrance. Thus there holds
  \begin{equation}\label{4-30}
  \begin{aligned}
 \e Z_2^+(\xi_{1}^+,m^+)
    &=  \e Z_2^+(0,\varphi_{2}^+(0;\xi_{1}^+,m^+))
    +\int_{0}^{\xi_{1}^+}
    \left(\sum_{i=1}^2d_{2i}^+\e Z_i^++  F_2^+\right)(s,\varphi_{2}^+(s;\xi_{1}^+,m^+))\de s\\
   \quad &= \hat Z_{20}^+(\eta_{1}^+)
  +\int_{0}^{\xi_{1}^+}
    \left(\sum_{i=1}^2d_{2i}^+\e Z_i^++    F_2^+\right)(s,\varphi_{2}^+(s;\xi_{1}^+,m^+))\de s.\\
    \end{aligned}
    \end{equation}
   This, together with  \eqref{4-29}, implies that
    \begin{equation}\label{4-31}
    \begin{cases}
\begin{aligned}
\e Z_1^+(y_1,y_2)
&= 2\arctan{(g^+)^{\prime}}(\xi_{1}^+)
  +\int_{\xi_{1}^+}^{y_1}
    \left(\sum_{i=1}^2d_{1i}^+\e Z_i^++  F_1^+\right)(s,\varphi_{1}^+(s;y_1,y_2))\de s\\
    &\quad-\hat Z_{20}^+(\eta_{1}^+)
  -\int_{0}^{\xi_{1}^+}
    \left(\sum_{i=1}^2d_{2i}^+\e Z_i^++    F_2^+\right)(s,\varphi_{2}^+(s;\xi_{1}^+,m^+))\de s,\\
     \e Z_2^+(y_1,y_2
    &= \hat Z_{20}^+(\eta^+_{2})
  +\int_{0}^{y_1}
    \left(\sum_{i=1}^2d_{2i}^+\e Z_i^++   F_2^+\right)(s,\varphi_{2}^+(s;y_1,y_2))\de s.\\
   \end{aligned}
    \end{cases}
   \end{equation}
   \par It follows from  Picard iteration in \cite{LY85} that the integral equation \eqref{4-31} has a unique solution $ \Z^+\in C^{1}(\overline{\mn_1^+\cup\mn_2^+}) $. Hence there exists   a unique solution $ \Z^+\in C^{1}(\overline{\mn_1^+\cup\mn_2^+}) $ to the  boundary value problem \eqref{4-28}. Next,  the estimate of the solution can be established.
    \begin{proposition}
       For $ ( \hat{\bm Z}^-, \hat{\bm Z}^+)\in \mj_\delta $,  there exists a  positive constant  $ \mc_6^+ $ depending only on the data  such that  the solution $  \Z^+$ to the problem \eqref{4-28} satisfies
    \begin{equation}\label{4-32}
    \begin{aligned}
    \| \Z^+\|_{1,\alpha;\overline{\mn_1^+}\cup \overline{\mn_2^+}}
    &\leq \mc_6^+\left(\|(g^+)^\prime\|_{1,\alpha;[0,\xi_{\ast\ast}^+]}+\| \hat{\bm Z}_{0}^+\|_{1,\alpha; [0,m^+]}+\| {\bm F}^+\|_{1,\alpha;\overline{\mn^+}}\right)\\
&\leq \mc_6^+(\sigma_{cd}+\delta^2).
\end{aligned}
    \end{equation}
    \end{proposition}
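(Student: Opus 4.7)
The plan is to mirror the Picard--iteration plus Gronwall strategy from Proposition~4.2, but now carry along the extra boundary terms introduced when a $\lambda_1^+$--characteristic reflects off the upper wall $\Sigma_w^+$. Since the domain $\overline{\mathcal{N}_1^+}\cup\overline{\mathcal{N}_2^+}$ contains $\overline{\mathcal{N}_1^+}$ and Proposition~4.2 already controls $\tilde{\bm Z}^+$ there, it suffices to produce the $C^{1,\alpha}$ bound on the strip $\overline{\mathcal{N}_2^+}$ using the integral representation \eqref{4-31}, then combine the two estimates.

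First I would invoke Picard iteration on \eqref{4-31} to get a unique $C^1$ solution $\tilde{\bm Z}^+\in C^1(\overline{\mathcal{N}_1^+}\cup\overline{\mathcal{N}_2^+})$; the fixed-point map is a contraction in $C^0$ once $L$ is used in the Gronwall argument. For the $C^0$ estimate, substitute $(y_1,y_2)\in\mathcal{N}_2^+$ into \eqref{4-31}, take suprema over the time-slabs $\{y_1\le s\}$, and apply Gronwall; the boundary contribution $2\arctan(g^+)'(\xi_1^+)$ and the entrance trace $\hat Z_{20}^+(\eta_1^+)$ produce the terms $\|(g^+)'\|_{0,0}$ and $\|\hat{\bm Z}_0^+\|_{0,0}$ on the right.

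For the $C^1$ bound I would first establish the analogue of \eqref{4-18} for the reflected foot maps $\xi_1^+(y_1,y_2)$, $\eta_1^+(y_1,y_2)$, $\eta_2^+(y_1,y_2)$. Writing $\xi_1^+$ as the solution of $\varphi_1^+(\xi_1^+;y_1,y_2)=m^+$ and using the transversality $\lambda_1^+<0$ at the wall together with the Lemma~3.5 bounds on $\lambda_i^+$, the implicit function theorem gives $C^{1,\alpha}$ regularity for $\xi_1^+$; a second application with $\varphi_2^+(0;\xi_1^+,m^+)=\eta_1^+$ yields $C^{1,\alpha}$ regularity of $\eta_1^+$. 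Differentiating \eqref{4-31} with respect to $(y_1,y_2)$, using these bounds and the equation \eqref{4-28} to convert boundary $y_1$-derivatives into tangential ones (as was done in \eqref{4-15}), and applying Gronwall exactly as in \eqref{4-16}--\eqref{4-17}, gives $\|D\tilde{\bm Z}^+\|_{0,0}$. The new ingredient is that the derivatives of the wall term contribute $(g^+)''$, which is why $\|(g^+)'\|_{1,\alpha;[0,\xi_{**}^+]}$ appears on the right-hand side.

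The Hölder $C^{0,\alpha}$ estimate for $D\tilde{\bm Z}^+$ is the technical heart. For two points $\bm P,\bm Q\in\mathcal{N}_2^+$ I would split the difference $D\tilde Z_i^+(\bm P)-D\tilde Z_i^+(\bm Q)$ into (i) a wall contribution from $(\xi_1^+(\bm P),m^+)$ vs.\ $(\xi_1^+(\bm Q),m^+)$, (ii) an entrance contribution from $(0,\eta_j^+(\bm P))$ vs.\ $(0,\eta_j^+(\bm Q))$, (iii) integrals along common characteristic lengths, and (iv) a short-segment integral on $[\min y_{1P},\max y_{1Q}]$, exactly paralleling the decomposition \eqref{4-20}. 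The bounds on $\|\xi_1^+\|_{1,\alpha}$, $\|\eta_i^+\|_{1,\alpha}$, $\|\varphi_i^+(s;\cdot,\cdot)\|_{1,\alpha}$ together with the hypotheses $g^+\in C^{2,\alpha}$, $\hat{\bm Z}_0^+\in C^{1,\alpha}$, $\bm F^+\in C^{1,\alpha}$ and \eqref{3-38} let me close a Gronwall inequality of the form $[D\tilde{\bm Z}^+]_\alpha(y_1)\le C\int_0^{y_1}[D\tilde{\bm Z}^+]_\alpha(s)\,ds+C(\sigma_{cd}+\delta^2)$.

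The main obstacle I expect is item (i): tracking the Hölder dependence of the reflected foot $\xi_1^+$ on $(y_1,y_2)$ so that the non-local term $2\arctan(g^+)'(\xi_1^+(\cdot))$ is genuinely $C^{1,\alpha}$. This requires uniform transversality of $\lambda_1^+$-characteristics to $\Sigma_w^+$ (guaranteed by Lemma~3.5 and smallness of $\sigma_{cd},\delta$) and the fact that $\lambda_1^+\in C^{1,\alpha}$. Once this is in hand, combining the resulting estimate on $\overline{\mathcal{N}_2^+}$ with Proposition~4.2 on $\overline{\mathcal{N}_1^+}$ and taking $\mathcal{C}_6^+$ to be the sum of the two constants yields \eqref{4-32}, where the final $\le \mathcal{C}_6^+(\sigma_{cd}+\delta^2)$ uses \eqref{3-38} and the definition of $\sigma_{cd}$ in \eqref{3-30}.
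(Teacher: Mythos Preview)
Your plan matches the paper's proof almost step for step: reduce to $\overline{\mathcal N_2^+}$ via Proposition~4.2, use the integral representation \eqref{4-31}, get the $C^0$ and $C^1$ bounds by Gronwall on time--slabs, establish $C^1$ control of the foot maps $\xi_1^+,\eta_1^+,\eta_2^+$, and then run a H\"older Gronwall. One point where the paper's argument is a bit more delicate than ``exactly paralleling \eqref{4-20}'': for $\bar Z_1^+$ the two integrals run over $[\xi_1^+(\bm P),y_{1P}]$ and $[\xi_1^+(\bm Q),y_{1Q}]$, and when $y_{1P}<\xi_1^+(\bm Q)$ these intervals are disjoint so there is no ``common characteristic length'' to split off; the paper handles this by a case distinction, inserting the wall value $(\,\cdot\,)(s,m^+)$ and using $|\varphi_1^+(s;y_{1\ell},y_{2\ell})-m^+|\le C|s-\xi_1^+(\bm\ell)|\le C\,d(\bm P,\bm Q)$ in that regime, which still feeds into the same Gronwall inequality.
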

    \begin{proof}
       Note that   $ \| \Z^+\|_{1,\alpha;\overline{\mn_1^+}}  $ has been obtained in Proposition \rm{4.2}. Thus we only consider the estimate of $
    \Z^+ $ in $ \overline{\mn_2^+} $.
    \par Define
   \begin{equation*}
     \begin{cases}
     \overline {\mn_{21}^+}(s)=\{0\leq w_1\leq s, \zeta_1^+(w_1)\leq w_2
     \leq m^+\}, \quad\quad {\rm {for}} \quad 0\leq s\leq \xi_{\ast}^+,\\
      \overline {\mn_{22}^+}(s)=\{\xi_{\ast}^+\leq w_1\leq s, \zeta_2^+(w_1)\leq w_2
     \leq m^+\},\quad\  {\rm {for}} \quad \xi_{\ast}^+\leq s\leq \xi_{\ast\ast}^+,
     \end{cases}
   \end{equation*}
   and set
     \begin{equation*}
     \begin{cases}
     \begin{aligned}
      \mz^+(s)&=\sum_{i=1}^2\sup _{ \bm (w_1,w_2)\in  \overline{\mn_{21}^+}(s)}|   \e Z_i^+(\bm w)|+\sum_{i=1}^2\sup _{ \bm w=(w_1,w_2)\in   \overline{\mn_{22}^+}(s)}|   \e Z_i^+(\bm w)|,\\
        D \mz^+(s)&=\sum_{i=1}^2\sup _{\bm w=(w_1,w_2)\in  \overline{\mn_{21}^+}(s)}|  D \e Z_i^+(\bm w)|+\sum_{i=1}^2\sup _{\bm w=(w_1,w_2)\in  \overline{\mn_{22}^+}(s)}|  D \e Z_i^+(\bm w)|,\\
   [D \mz^+]_\alpha(s)&=\sum_{i=1}^2\sup _{\substack{\bm w=(w_1,w_2)\in  \overline{\mn_{21}^+}(s)\\\w {\bm w} =(\w w_1,\w w_2)\in  \overline{\mn_{21}^+}(s)}}
     d^{-\alpha}(\bm w,\w {\bm w})|  D \e Z_i^+(\bm w)- D \e Z_i^+(\w {\bm w})|\\
     &\quad+\sum_{i=1}^2\sup _{\substack{\bm w=(w_1,w_2)\in  \overline{\mn_{22}^+}(s)\\\w {\bm w} =(\w w_1,\w w_2)\in  \overline{\mn_{22}^+}(s)}}
      d^{-\alpha}(\bm w,\w {\bm w})|  D \e Z_i^+(\bm w)- D \e Z_i^+(\w {\bm w})|.
       \end{aligned}
       \end{cases}
     \end{equation*}
     Obviously,
     \begin{equation}\label{4-33}
     \|
    \Z^+\|_{1,\alpha;\overline{\mn_2^+}}=\sup_{ 0\leq s\leq  \xi_{\ast\ast}^+} \mz^+(s)+\sup_{0\leq s\leq \xi_{\ast\ast}^+}
      D \mz^+(s)+\sup_{ 0\leq s\leq  \xi_{\ast\ast}^+} [D \mz^+]_\alpha(s).
    \end{equation}
     \par  Next,  we divide the proof into two steps.
   \par $\textbf{Step 1} $. The estimate of $ \|
    \Z^+\|_{0,0;\overline{\mn_2^+}} $. For $ 0\leq y_1\leq \xi_{\ast\ast}^+ $, it follows from \eqref{4-31}  that
    \begin{equation}\label{4-34}
    \mz^+(y_1)
    \leq C_{21}\left(\| \hat{ \bm Z}_{0}^+\|_{0,0; [0,m^+]}+\|  {\bm F}^+\|_{0,0;\overline{\mn^+ } }+\|(g^+)^\prime\|_{0,0;[0,\xi_{\ast\ast}]}\right)+C_{21}\int_{0}^{y_1}
    \mz^+(s)\de s,\\
   \end{equation}
      where  $  C_{21}>0 $   depends only on the data. Applying the  Gronwall inequality gets
    \begin{equation}\label{4-35}
   \| \Z^+\|_{0,0;\overline{\mn_2^+}}
        \leq C_{21}e^{C_{21}L}\left(\| \hat{ \bm Z}_{0}^+\|_{1,\alpha; [0,m^+]}+\|  {\bm F}^+\|_{1,\alpha;\overline{\mn^+} }+\|(g^+)^\prime\|_{1,\alpha;[0,\xi_{\ast\ast}]}\right).
      \end{equation}
         \par $\textbf{Step 2} $. The estimate of $ \|D
    \Z^+\|_{0,\alpha;\overline{\mn^+_2}} $. To do this,  differentiating the second equation in $\eqref{4-28} $ with respect to $y_1 $ and  $y_2 $ and integrating it along the characteristic curve $\varphi_{i}^+(s;y_1,y_2) $ to yield
    \begin{equation}\label{4-36}
    \begin{cases}
    \begin{aligned}
        \p_{y_j} \e Z_2^+(y_1,y_2)
        &=\p_{y_j} \e Z_2^+(0,\eta_2^+)
    +\int_{0}^{y_1}\sum_{i=1}^2d_{2i}^+\p_{y_j}\e Z_i^+(s,\varphi_{2}^+(s;y_1,y_2))\de s \\
    &\quad +\int_{0}^{y_1}\left(\p_{y_j}  F_2^+-\p_{y_j}\lambda_2^+\p_{y_2} \e Z_2^++\sum_{i=1}^2\p_{y_j}d_{2i}^+\e Z_i^+\right)(s,\varphi_{2}^+(s;y_1,y_2))\de s,\\
     \p_{y_j} \e Z_1^+(y_1,y_2)
     &=\p_{y_j} \e Z_1^+(\xi_1^+,m^+) +\int_{\xi_{1}^+}^{y_1}\sum_{i=1}^2d_{1i}^+\p_{y_j}\e Z_i^+(s,\varphi_{1}^+(s;y_1,y_2))\de s \\
    &\quad +\int_{\xi_{1}^+}^{y_1}\left(\p_{y_j}  F_1^+-\p_{y_j}\lambda_1^+\p_{y_2} \e Z_1^++\sum_{i=1}^2\p_{y_j}d_{1i}^+\e Z_i^+\right)(s,\varphi_{1}^+(s;y_1,y_2))\de s,\\
   \end{aligned}
   \end{cases}
  \end{equation}
     where $ \p_{y_1}d_{ij}^+=0, (i,j=1,2 ) $.
      Furthermore, by the boundary condition on the entrance $ \Sigma_0^+ $, one obtains
      \begin{equation}\label{4-37}
      \begin{cases}
      \begin{aligned}
      &\p_{y_2}\e Z_2^+(0,y_2)=(\hat{  Z}_{20}^+)^\prime(y_2),\\
      &\p_{y_1}\e Z_2^+(0,y_2)=\left(-\lambda_2^+(\hat{  Z}_{20}^+)^\prime+\sum_{i=1}^2d_{2i}^+\hat{  Z}_{i0}^++F_2^+\right)(0,y_2).\\
       \end{aligned}
       \end{cases}
       \end{equation}
      Meanwhile, it follows from the boundary condition on the upper wall $ \Sigma_w^+ $ that one has
       \begin{equation}\label{4-38}
      \begin{cases}
      \begin{aligned}
      \p_{y_1} \e Z_1^+(\xi_1^+,m^+)
      &=\frac{2(g^+)^{''}}{((g^+)^{\prime})^2+1}(\xi_1^+)-\p_{y_1} \e Z_2^+(\xi_1^+,m^+)\\
      &=\frac{2(g^+)^{''}}{((g^+)^{\prime})^2+1}(\xi_1^+)
      -\left(-\lambda_2^+(\hat{  Z}_{20}^+)^\prime+\sum_{i=1}^2d_{2i}^+\hat{  Z}_{i0}^++F_2^+\right)(0,\eta_1^+)\\
    &\quad -\int_{0}^{\xi_{1}^+}\left(\sum_{i=1}^2d_{2i}^+\p_{y_1}\e Z_i^++\p_{y_1}  F_2^+-\p_{y_1}\lambda_2^+\p_{y_2} \e Z_2^+\right)(s,\varphi_{2}^+(s;\xi_1^+,m^+))\de s,\\
    \p_{y_2} \e Z_1^+(\xi_1^+,m^+)
      &=\left(\frac{1}{\lambda_1^+}(-\p_{y_1} \e Z_1^++\sum_{i=1}^2d_{1i}\e{  Z}_{i}^++F_1^+)\right)(\xi_1^+,m^+).\\
     \end{aligned}
       \end{cases}
       \end{equation}
        \par  For $ 0\leq y_1\leq\xi_{\ast\ast}^+ $, it follows from \eqref{4-36}-\eqref{4-38}  that
     \begin{equation}\label{4-39}
     \begin{aligned}
     D \mz^+(y_1)
   \leq C_{22}\int_{0}^{y_1}D\mz^+(s)\de s
    +C_{22}\left(\|  \hat{ \bm Z}_{0}^+\|_{1,\alpha; [0,m^+]}+\| {\bm F}^+\|_{1,\alpha;\overline{\mn^+}}+\|(g^+)^{\prime}\|_{1,\alpha;[0, \xi_{\ast\ast}^+] }\right),\\
      \end{aligned}
       \end{equation}
       where  $  C_{22}>0 $   depends only on the data. Then applying the  Gronwall inequality derives
    \begin{equation}\label{4-40}
   \|D \Z^+\|_{0,0;\overline{\mn^+_2}}
        \leq C_{22} e^{C_{22}L}\left(\|  \hat{ \bm Z}_{0}^+\|_{1,\alpha; [0,m^+]}+\|{ \bm F}^+\|_{1,\alpha;\overline{\mn^+}}+\|(g^+)^{\prime}\|_{1,\alpha;[0, \xi_{\ast\ast}] }\right).
      \end{equation}
      \par To get the  estimate of $ [D \Z^+]_{0,\alpha;\overline{\mn^+_2}} $, we first establish the following estimate:
\begin{equation}\label{4-41}
     \sum_{i=1}^2\|\varphi_{i}^+(s;\cdot,\cdot)\|
_{1;\overline{\mn_1^+(s)\cup\mn_2^+(s)}}
+\|\xi_{1}^+\|_{1;\overline{\mn_2^+}}
+\sum_{i=1}^2\|\eta_{i}^+\|_{1;\overline{\mn_2^+}}
\leq C,
\end{equation}
where $ C>0 $ depends only the data but is independent of $ s $.
\par Differentiating  $\eqref{4-27} $ with respect to $ y_1 $ and $ y_2 $ to get
   \begin{equation*}
   \begin{cases}
    \begin{aligned}
 &\frac{\p \varphi_{i}^+(s;y_1,y_2)}{\p y_2}
 =e^{\int_{y_1}^{s}\p_{y_2}\lambda_i^+
 (\mu,\varphi_{i}^+(\mu;y_1,y_2))\de \mu},\\
 &\frac{\p \varphi_{i}^+(s;y_1,y_2)}{\p y_1}
 =-\lambda_i^+(y_1,y_2)e^{\int_{y_1}^{s}\p_{y_2}\lambda_i^+
 (\mu,\varphi_{i}^+(\mu;y_1,y_2))\de \mu}.\\
 \end{aligned}
 \end{cases}
      \end{equation*}
     Since $ \xi_{1}^+ $ and $\eta_{2}^+ $ can be regarded as functions of $ (y_1,y_2) $, hence it follows from \eqref{4-27} that
      \begin{equation}\label{4-42}
   \begin{cases}
   \begin{aligned}
   &m^+=y_2-\int_{\xi_1^+}^{y_1}
    {\lambda_1^+(s,\varphi_{1}^+
    (s;y_1,y_2))}\de s,\\
    &\eta_{2}^+=y_2-\int_{0}^{y_1}
    {\lambda_2^+(s,\varphi_{2}^+
    (s;y_1,y_2))}\de s,\\
     & \eta_{1}^+=m^+-\int_{0}^{\xi_{1}^+}
    {\lambda_2^+(s,\varphi_{2}^+
    (s;\xi_{1}^+,m^+))}\de s.
    \end{aligned}
    \end{cases}
    \end{equation}
    By differentiating the equations in $\eqref{4-42} $ with respect to $ y_1 $ and $ y_2 $, one can derive that
   \begin{equation*}
   \begin{cases}
   \begin{aligned}
 & \frac{\p \xi_{1}^+}{\p y_1}=-\frac{1}{\lambda_1^+(\xi_{1}^+,m^+)}
 \left(\int_{\xi_{1}^+}^{y_{1}}
    \lambda_1^+\p_{y_2}\lambda_1^+e^{\int_{y_1}^{s}\p_{y_2}\lambda_1^+
 (\mu,\varphi_{1}^+(\mu;y_1,y_2))\de \mu}\de s-\lambda_1^+(y_1,y_2)\right),\\
 &\frac{\p \xi_{1}^+}{\p y_2}=-\frac{1}{\lambda_1^+(\xi_{1}^+,m^+)}
 \left(1-\int_{\xi_{1}^+}^{y_{1}}
    \p_{y_2}\lambda_1^+
    e^{\int_{y_1}^{s}\p_{y_2}\lambda_1^+(\mu,\varphi_{1}^+(\mu;y_1,y_2))\de \mu}\de s\right),\\
     &\frac{\p \eta_{2}^+}{\p y_2}=1-\int_{0}^{y_1}\p_{y_2}\lambda_2^+
    e^{\int_{y_1}^{s}\p_{y_2}\lambda_2^+(\mu,\varphi_{2}^+(\mu;y_1,y_2)) \de \mu}\de s,\\
     &\frac{\p \eta_{2}^+}{\p y_1}=\int_{0}^{y_1}\lambda_2^+\p_{y_2}\lambda_2^+
    e^{\int_{y_1}^{s}\p_{y_2}\lambda_2^+(\mu,\varphi_{2}^+(\mu;y_1,y_2)) \de \mu}\de s-\lambda_2^+(y_1,y_2),\\
    & \frac{\p \eta_{1}^+}{\p y_j}=
 \left(\int_{0}^{\xi_{1}^+}
    \lambda_2^+\p_{y_2}\lambda_2^+e^{\int_{\xi_{1}^+}^{s}
    \p_{y_2}\lambda_2^+
 (\mu,\varphi_{2}^+(\mu;\xi_{1}^+,m^+))\de \mu}\de s-\lambda_2^+(\xi_{1}^+,m^+)\right) \frac{\p \xi_{1}^+}{\p y_j},j=1,2.
\end{aligned}
\end{cases}
 \end{equation*}
 Thus one infers
 \begin{equation*}
    \sum_{i=1}^2\|\varphi_{i}^+(s;\cdot,\cdot)\|
_{1;\overline{\mn_1^+(s)\cup\mn_2^+(s)}}
+\|\xi_{1}^+\|_{1;\overline{\mn_2^+}}
+\sum_{i=1}^2\|\eta_{i}^+\|_{1;\overline{\mn_2^+}}
\leq C.
\end{equation*}
\par For any two points $   \bm Q=(y_{1Q},y_{2Q}) $ and $ \bm P=(y_{1P},y_{2P})\in \mn^+_2 $,  let $ (\xi_{1}^+(\bm \ell),m^+) $ ($ \ell=P,Q $)   be the intersection point of the characteristic curve corresponding to $ \lambda_1^+ $ starting from point $(y_{1\ell},y_{2\ell})$ and the upper wall, and  let $ (0,\eta_{2}^+(\bm \ell)) $ be the intersection point of the characteristic curve corresponding to $ \lambda_2^+ $ starting from point $(y_{1\ell},y_{2\ell})$ and the entrance  $\Sigma_{0}^+ $.   Due to the  reflection of the characteristic by the upper wall,   the characteristic curve corresponding to $ \lambda_2^+ $ starting from point $ (\xi_{1}^+(\bm\ell),m^+) $  has a unique intersection point  $ (0,\eta_{1}^+(\bm \ell)) $ with the entrance $\Sigma_{0}^+ $. Without loss of generality, we assume that $ y_{1P}\leq y_{1Q} $. Then one has $ \xi_{1}^+(\bm P)\leq\xi_{1}^+(\bm Q) $.
 \par Note that
   \begin{equation*}
    \begin{aligned}
    &d^{-\alpha}(\bm P,\bm Q)\left(|D \e Z_1^+(\bm P)-D \e Z_1^+(\bm Q)|+|D \e Z_2^+(\bm P)-D \e Z_2^+(\bm Q)|\right)\\
   &\leq d^{-\alpha}(\bm P,\bm Q)\left(|\p_{y_1} \e Z_1^+(\bm P)-\p_{y_1} \e Z_1^+(\bm Q)|+|\p_{y_2} \e Z_1^+(\bm P)-\p_{y_2} \e Z_1^+(\bm Q)|\right)\\
    &\quad+d^{-\alpha}(\bm P,\bm Q)\left(|\p_{y_1} \e Z_2^+(\bm P)-\p_{y_1} \e Z_2^+(\bm Q)|+|\p_{y_2} \e Z_2^+(\bm P)-\p_{y_2} \e Z_2^+(\bm Q)|\right)\\
    &=K_{11}^++K_{12}^++K_{21}^++K_{22}^+.
       \end{aligned}
   \end{equation*}
   For convenience, we only estimate $ K_{11}^+ $, the estimate of other terms are similar to $ K_{11}^+ $.
   The straightforward calculations yield
       \begin{equation*}\label{4-49}
    \begin{aligned}
    K_{11}^+&\leq d^{-\alpha}(\bm P,\bm Q)|\p_{y_1}\e Z_1^+(\xi_1^+(\bm P),m^+)-\p_{y_1}\e Z_1^+(\xi_1^+(\bm Q),m^+)|\\
    &\quad+d^{-\alpha}(\bm P,\bm Q)
  \left| \int_{\xi_{1}^+(\bm Q)}^{y_{1Q}}\sum_{i=1}^2d_{1i}^+\p_{y_1}\e Z_i^+
   (s,\varphi_{1}^+(s;y_{1Q},y_{2Q}))\de s\right.\\
   &\qquad\qquad\qquad\qquad\left.-\int_{\xi_{1}^+(\bm P)}^{y_{1P}}\sum_{i=1}^2d_{1i}^+\p_{y_1}\e Z_i^+
   (s,\varphi_{1}^+(s;y_{1P},y_{2P}))\de s\right|\\
    &\quad+d^{-\alpha}(\bm P,\bm Q)
   \left|\int_{\xi_{1}^+(\bm Q)}^{y_{1Q}}  \left(\p_{y_1}F_1^+-\p_{y_1}\lambda_1^+\p_{y_2} \e Z_1^+\right)
   (s,\varphi_{1}^+(s;y_{1Q},y_{2Q}))\de s\right.\\
   \end{aligned}
   \end{equation*}
     \begin{equation}\label{4-43}
    \begin{aligned}
 &\qquad\qquad\qquad\qquad\left.-\int_{\xi_{1}^+(\bm P)}^{y_{1P}}\left(\p_{y_1}F_1^+-\p_{y_1}\lambda_1^+\p_{y_2} \e Z_1^+\right)
   (s,\varphi_{1}^+(s;y_{1P},y_{2P}))\de s\right|\\
    &= K_{111}^++K_{112}^++K_{113}^+.
     \end{aligned}
   \end{equation}
   For the first term, one derives
   \begin{equation*}
    \begin{aligned}
   &K_{111}^+=d^{-\alpha}(\bm P,\bm Q)|\p_{y_1}\e Z_1^+(\xi_1^+(\bm P),m^+)-\p_{y_1}\e Z_1^+(\xi_1^+(\bm Q),m^+)|\\
   &\leq \frac{|\xi_1^+(\bm P)-\xi_1^+(\bm Q)|^\alpha}{d^{\alpha}(\bm P,\bm Q)} \frac{\left|\frac{2(g^+)^{''}}{((g^+)^{\prime})^2+1}(\xi_1^+(\bm P))
   -\frac{2(g^+)^{''}}{((g^+)^{\prime})^2+1}(\xi_1^+(\bm Q))\right|}{|\xi_1^+(\bm P)-\xi_1^+(\bm Q)|^\alpha}\\
&\quad+\frac{|\left(-\lambda_2^+(\hat{  Z}_{20}^+)^\prime+\sum_{i=1}^2d_{2i}\hat{  Z}_{i0}^++F_2^+\right)(0,\eta_{1}^+(\bm P))-\left(-\lambda_2^+(\hat{  Z}_{20}^+)^\prime+\sum_{i=1}^2d_{2i}\hat{  Z}_{i0}^++F_2^+\right)(0,\eta_{1}^+(\bm Q))|}{|\eta_{1}^+(\bm P)-\eta_{1}^+(\bm Q)|^\alpha}\\
&\qquad \times \frac{|\eta_1^+(\bm P)-\eta_1^+(\bm Q)|^\alpha}{d^{\alpha}(\bm P,\bm Q)}\\
 &\quad+\int_{0}^{\xi_{1}^+(\bm P)}\frac{\left|\left(\sum_{i=1}^2d_{2i}^+\p_{y_1}\e Z_i^++\p_{y_1}  F_2^+-\p_{y_1}\lambda_2^+\p_{y_2} \e Z_2^+\right)(s,\varphi_{2}^+(s;\xi_1^+(\bm P),m^+))\right.}{|\varphi_{2}^+(s;\xi_1^+(\bm P),m^+)-\varphi_{2}^+(s;\xi_1^+(\bm Q),m^+)|^\alpha}\\
 &\qquad\qquad\qquad-\frac{\left.\left(\sum_{i=1}^2d_{2i}^+\p_{y_1}\e Z_i^++\p_{y_1}  F_2^+-\p_{y_1}\lambda_2^+\p_{y_2} \e Z_2^+\right)(s,\varphi_{2}^+(s;\xi_1^+(\bm Q),m^+))\right|}{|\varphi_{2}^+(s;\xi_1^+(\bm P),m^+)-\varphi_{2}^+(s;\xi_1^+(\bm Q),m^+)|^\alpha}\\
 &\quad\quad\quad\times \frac{|\varphi_{2}^+(s;\xi_1^+(\bm P),m^+)-\varphi_{2}^+(s;\xi_1^+(\bm Q),m^+)|^\alpha}{d^{\alpha}(\bm P,\bm Q)}
 \de s\\
 &\quad+\int_{\xi_{1}^+(\bm P^+)}^{\xi_{1}^+(\bm Q^+)}
   \left| \left(\sum_{i=1}^2d_{2i}^+\p_{y_1}\e Z_i^++\p_{y_1}  F_2^+-\p_{y_1}\lambda_2^+\p_{y_2} \e Z_2^+
 \right)(s,\varphi_{2}^+(s;\xi_1^+(\bm Q^+),m^+))\right|\de s\\
 &\leq Cd^{-\alpha}(\bm P,\bm Q)
    \int_{0}^{\xi_1^+(\bm P)}[D \mz^+]_\alpha(s) \de s\\
   &\quad+C\left(\|(g^+)^\prime\|_{1,\alpha;[0,\xi_{\ast\ast}]}+\|  \hat {\bm Z}_{0}^+\|_{1,\alpha;[0,m^+]}+\| \Z^+\|_{1,0;\overline{\mn^+_1}\cup\overline{\mn_2^+}}+\| { \bm F}^+\|_{1,\alpha;\overline{\mn^+} }\right).\\
 \end{aligned}
   \end{equation*}
      For the second term $ K_{112}^+ $,
   we divided two cases to estimate.
   \par  Case 1: $ y_{1P}\geq \xi_1^+(Q) $. For this case,
   it holds that
   \begin{equation*}
    \begin{aligned}
     K_{112}^+
   &\leq d^{-\alpha}(\bm P,\bm Q)
   \int_{\xi_{1}^+(\bm Q)}^{y_{1P}}\left|\sum_{i=1}^2d_{1i}^+\p_{y_1}\e Z_i^+
   (s,\varphi_{1}^+(s;y_{1Q},y_{2Q}))\right.\\
   &\qquad\qquad\qquad\qquad\quad\left.-
 \sum_{i=1}^2d_{1i}^+\p_{y_1}\e Z_i^+
   (s,\varphi_{1}^+(s;y_{1P},y_{2P}))\right|\de s\\
   &\quad+d^{-\alpha}(\bm P,\bm Q)\int_{y_{1 P}}^{y_{1Q}}\left|\sum_{i=1}^2d_{1i}^+\p_{y_1}\e Z_i^+
   (s,\varphi_{1}^+(s;y_{1Q},y_{2Q}))\right|\de s\\
    &\quad+d^{-\alpha}(\bm P,\bm Q)\int_{\xi_{1}^+(\bm P)}^{\xi_{1}^+( \bm Q)}\left|\sum_{i=1}^2d_{1i}^+\p_{y_1}\e Z_i^+
   (s,\varphi_{1}^+(s;y_{1P},y_{2P}))\right|\de s\\
   \end{aligned}
   \end{equation*}
   \begin{equation*}
    \begin{aligned}
  &\leq \int_{\xi_{1}^+(\bm Q)}^{y_{1P}}
     \frac{\sum_{i=1}^2\left|\p_{y_1}\e Z_i^+ (s,\varphi_{1}^+(s;y_{1Q},y_{2Q}))-\p_{y_1}\e Z_i^+ (s,\varphi_{1}^+(s;y_{1P},y_{2P}))\right|}
     {|\varphi_{1}^+(s;y_{1Q},y_{2Q})-\varphi_{1}^+(s;y_{1P},y_{2P})|^\alpha}
     \times\\
&\quad\quad\frac{|\varphi_{1}^+(s;y_{1Q},y_{2Q})
-\varphi_{1}^+(s;y_{1P},y_{2P})|
^\alpha}{d^{\alpha}(\bm P,\bm Q)}\left|\sum_{i=1}^2d_{1i}^+ (s,\varphi_{1}^+(s;y_{1Q},y_Q))\right| \de s\\
&\quad+C\| \Z^+\|_{1,0;\overline{\mn^+_1}\cup\overline{\mn_2^+}}\\
 &\leq C
    \int_{\xi_{1}^+(\bm Q)}^{y_{1P}}[D \mz^+]_\alpha(s) \de s+C\| \Z^+\|_{1,0;\overline{\mn^+_1}\cup\overline{\mn_2^+}}.\\
 \end{aligned}
 \end{equation*}
  \par  Case 2: $ y_{1P}<\xi_1^+(Q) $. For this case,
  one gets
   \begin{equation*}
    \begin{aligned}
     K_{112}^+
   &\leq d^{-\alpha}(\bm P,\bm Q)
  \left| \int_{\xi_{1}^+(\bm Q)}^{y_{1Q}}\sum_{i=1}^2d_{1i}^+\p_{y_1}\e Z_i^+
   (s,\varphi_{1}^+(s;y_{1Q},y_{2Q}))\de s
 -\int_{\xi_{1}^+(\bm Q)}^{y_{1Q}}\sum_{i=1}^2d_{1i}^+\p_{y_1}\e Z_i^+
   (s,m^+)\de s\right|\\
   &\quad+d^{-\alpha}(\bm P,\bm Q)
  \left| \int_{\xi_{1}^+(\bm P)}^{y_{1P}}\sum_{i=1}^2d_{1i}^+\p_{y_1}\e Z_i^+
   (s,m^+)\de s-\int_{\xi_{1}^+(\bm P)}^{y_{1P}}\sum_{i=1}^2d_{1i}^+\p_{y_1}\e Z_i^+
   (s,\varphi_{1}^+(s;y_{1P},y_{2P}))\de s\right|\\
   &\quad+d^{-\alpha}(\bm P,\bm Q)\left|\int_{\xi_{1}^+(\bm Q)}^{y_{1Q}}\sum_{i=1}^2d_{1i}^+\p_{y_1}\e Z_i^+
   (s,m^+)\de s- \int_{\xi_{1}^+(\bm P)}^{y_{1P}}\sum_{i=1}^2d_{1i}^+\p_{y_1}\e Z_i^+
   (s,m^+)\de s\right|\\
   &\leq \int_{\xi_{1}^+(\bm Q)}^{y_{1Q}}
     \frac{\sum_{i=1}^2\left|\p_{y_1}\e Z_i^+ (s,\varphi_{1}^+(s;y_{1Q},y_{2Q}))-\p_{y_1}\e Z_i^+ (s,\varphi_{1}^+(\xi_{1}^+(\bm Q);y_{1Q},y_{2Q}))\right|}
     {|\varphi_{1}^+(s;y_{1Q},y_{2Q})
     -\varphi_{1}^+(\xi_{1}^+(\bm Q);y_{1Q},y_{2Q})|^\alpha}
     \times\\
&\quad\quad\frac{|\varphi_{1}^+(s;y_{1Q},y_{2Q})
-\varphi_{1}^+(\xi_{1}^+(\bm Q);y_{1Q},y_{2Q})|
^\alpha}{d^{\alpha}(\bm P,\bm Q)}\left|d_{1i}^+ (s,\varphi_{1}^+(s;y_{1Q},y_Q))\right| \de s\\
&\quad+\int_{\xi_{1}^+(P)}^{y_{1P}}
     \frac{\sum_{i=1}^2\left|\p_{y_1}\e Z_i^+ (s,\varphi_{1}^+(s;y_{1P},y_{2P}))-\p_{y_1}\e Z_i^+ (s,\varphi_{1}^+(\xi_{1}^+(\bm P);y_{1P},y_{2P}))\right|}
     {|\varphi_{1}^+(s;y_{1P},y_{2P})
     -\varphi_{1}^+(\xi_{1}^+(\bm P);y_{1P},y_{2P})|^\alpha}
     \times\\
&\quad\quad\frac{|\varphi_{1}^+(s;y_{1P},y_{2P})
-\varphi_{1}^+(\xi_{1}^+(\bm P);y_{1P},y_{2P})|
^\alpha}{d^{\alpha}(\bm P,\bm Q)}\left|d_{1i}^+ (s,\varphi_{1}^+(s;y_{1Q},y_Q))\right| \de s\\
 &\quad+d^{-\alpha}(\bm P,\bm Q)\left(\int_{\xi_{1}^+(\bm P)}^{\xi_{1}^+(\bm Q)}\left|\sum_{i=1}^2d_{1i}^+\p_{y_1}\e Z_i^+
   (s,m^+)\right|\de s+ \int_{y_{1P}}^{y_{1Q}}\left|\sum_{i=1}^2d_{1i}^+\p_{y_1}\e Z_i^+
   (s,m^+)\right|\de s\right)\\
    \end{aligned}
   \end{equation*}
   \begin{equation*}
    \begin{aligned}
  &\leq \int_{\xi_{1}^+(\bm Q)}^{y_{1Q}} \frac{C|s-\xi_{1}^+(\bm Q)|}{|y_{1Q}-y_{1P}|}[D \mz^+]_\alpha(s) \de s
    +\int_{\xi_{1}^+(\bm P)}^{y_{1P}} \frac{C|s-\xi_{1}^+(\bm P)|}{{|y_{1P}-y_{1Q}|}}[D \mz^+]_\alpha(s) \de s+C\| \Z^+\|_{1,0;\overline{\mn^+_1}\cup\overline{\mn_2^+}}\\
    &\leq \int_{\xi_{1}^+(\bm Q)}^{y_{1Q}} C[D \mz^+]_\alpha(s) \de s
    +\int_{\xi_{1}^+(\bm P)}^{y_{1P}} C[D \mz^+]_\alpha(s) \de s+C\| \Z^+\|_{1,0;\overline{\mn^+_1}\cup\overline{\mn_2^+}}.\\
 \end{aligned}
   \end{equation*}
   Combining two cases, one gets
   \begin{equation}\label{4-44}
    K_{112}^+\leq C\int_{\xi_{1}^+(\bm P)}^{y_{1Q}} [D \mz^+]_\alpha(s)\de s +C\| \Z^+\|_{1,0;\overline{\mn^+_1}\cup\overline{\mn_2^+}}.
    \end{equation}
       For the last term  $ K_{113}^+ $, we can follow the estimate of $ K_{112}^+ $ to obtain
       \begin{equation*}\label{4-45}
    \begin{aligned}
    K_{113}^+
    &= d^{-\alpha}(\bm P,\bm Q)
   \left|\int_{\xi_{1}^+(\bm Q)}^{y_{1Q}}  \left(\p_{y_1}F_1^+-\p_{y_1}\lambda_1^+\p_{y_2} \e Z_1^+\right)
   (s,\varphi_{1}^+(s;y_{1Q},y_{2Q}))\de s\right.\\
    \end{aligned}
   \end{equation*}
     \begin{equation}\label{4-45}
    \begin{aligned}
 &\qquad\qquad\qquad\left.-\int_{\xi_{1}^+(\bm P)}^{y_{1P}}\left(\p_{y_1}F_1^+-\p_{y_1}\lambda_1^+\p_{y_2} \e Z_1^+\right)
   (s,\varphi_{1}^+(s;y_{1P},y_{2P}))\de s\right|\\
  &\leq C\int_{\xi_{1}^+(\bm P)}^{y_{1Q}} [D \mz^+]_\alpha(s)\de s +C\left(\| \Z^+\|_{1,0;\overline{\mn^+_1}\cup\overline{\mn_2^+}}+\| { \bm F}^+\|_{1,\alpha;\overline{\mn^+} }\right).
\end{aligned}
   \end{equation}
  \par Collecting the above estimates gives  that
   \begin{equation}\label{4-46}
   \begin{aligned}
   K_{11}^+&\leq C\int_{0}^{y_{1Q}} [D \mz^+]_\alpha(s)\de s\\
   &\quad+C\left(\|(g^+)^\prime\|_{1,\alpha;[0,\xi_{\ast\ast}]}+\|  \hat {\bm Z}_{0}^+\|_{1,\alpha;[0,m^+]}+\|    \bm F^+\|_{1,\alpha;\overline{\mn^+} }+\| \bm  \Z^+\|
_{1,0;\overline{\mn^+_1}\cup\overline{\mn_2^+}}\right),
\end{aligned}
\end{equation}
 where $C $ depends only on the data.
Similarly, one has
    \begin{equation}\label{4-47}
   \begin{aligned}
   &K_{12}^++K_{21}^++K_{22}^+\\
   &\leq
   C\left(\|(g^+)^\prime\|_{1,\alpha;[0,\xi_{\ast\ast}]}+\|  \hat {\bm Z}_{0}^+\|_{1,\alpha;[0,m^+]}+\|    \bm F^+\|_{1,\alpha;\overline{\mn^+} }+\| \bm  \Z^+\|
_{1,0;\overline{\mn^+_1}\cup\overline{\mn_2^+}}\right)\\
&\quad+C\int_{0}^{y_{1Q}}
     [D \mz^+]_\alpha(s)\de s.
     \end{aligned}
     \end{equation}
      Thus, for $ 0\leq y_{1P}\leq y_{1Q}\leq y_1\leq \xi_{\ast\ast}^+  $, it follows from \eqref{4-35}, \eqref{4-40}, \eqref{4-46}-\eqref{4-47} that
    \begin{equation}\label{4-48}
    \begin{aligned}
     &[D \mz^+]_\alpha(y_1)
     \leq C_{23}\int_{0}^{y_1}[D\mz^+]_\alpha(s)\de s+
  C_{23}\left(\|\hat{\bm Z}_{0}^+\|_{1,\alpha; [0,m^+]}+\|(g^+)^\prime\|_{1,\alpha;[0,\xi_{\ast\ast}]}+\|    \bm F^+\|_{1,\alpha;\overline{\mn^+} }\right).
    \end{aligned}
     \end{equation}
    By the Gronwall inequality, one derives
    \begin{equation}\label{4-49}
   [D \Z^+]_{0,\alpha;\overline{\mn_2^+}}
        \leq C_{23} e^{  C_{23}L}\left(\|\hat{\bm Z}_{0}^+\|_{1,\alpha; [0,m^+]}+\|(g^+)^\prime\|_{1,\alpha;[0,\xi_{\ast\ast}]}+\|    \bm F^+\|_{1,\alpha;\overline{\mn^+} }\right).
        \end{equation}
        Let $ \mc_6^+=C_{21} e^{ C_{21}L}+C_{22} e^{ C_{22}L}+C_{23}e^{ C_{23}L} $. Combining \eqref{4-35}, \eqref{4-40} and \eqref{4-49} yields that
        \begin{equation*}
        \begin{aligned}
    \| \Z^+\|_{1,\alpha;\overline{\mn_2^+}}
    &\leq \mc_{6}^+\left(\|\hat{\bm Z}_{0}^+\|_{1,\alpha; [0,m^+]}+\|(g^+)^\prime\|_{1,\alpha;[0,\xi_{\ast\ast}]}+\|    \bm F^+\|_{1,\alpha;\overline{\mn^+} }\right)\\
&\leq \mc_6^+(\sigma_{cd}+\delta^2).
\end{aligned}
    \end{equation*}
    \end{proof}
    \subsection{The solution to the boundary value problem  $({\mathbf{LP}} )$ in $ \mn_1\cup\mn_3$}\noindent
     \par In this subsection, we investigate  the  boundary value problem  $({\mathbf{LP}} )$ in $\mn_3$  with boundary conditions on the contact discontinuity $ y_2=0 $. The  boundary value problem  $({\mathbf{LP}} )$ in $\mn_3$ can be prescribed as the following:
  \begin{equation}\label{4-50}
  ({\mathbf{LP}})_3\quad  \begin{cases}
    \p_{y_1}  \Z^-+diag(\lambda_1^-,\lambda_2^-)\p_{y_2}
    \Z^--\mathbb{D}^- \Z^-
    = {\bm F}^-,
    &\quad {\rm in} \quad\mn^-,\\
    \p_{y_1} \Z^++diag(\lambda_1^+,\lambda_2^+)\p_{y_2}
    \Z^+-\mathbb{D}^+\Z^+=  {\bm F}^+,
    &\quad {\rm in} \quad\mn^+,\\
      \Z^-=  \hat{\bm Z}_{0}^-,&\quad {\rm on} \quad \Sigma_{0}^-,\\
      \Z^+=  \hat{\bm Z}_{0}^+,&\quad {\rm on} \quad \Sigma_{0}^+,\\
      \e Z_2^+- \e Z_1^-=  \e Z_2^-- \e Z_1^+ &\quad {\rm on} \quad \Sigma,\\
      \beta \e Z_2^+ +\alpha \e Z_1^-= \beta \e Z_1^++\alpha \e Z_2^--
      \mq, &\quad {\rm on} \quad \Sigma.\\
     \end{cases}
    \end{equation}
  \par  Let   $ (\xi_{\ast},\eta_{\ast})  $ be the  intersection point of the characteristic curves $ \zeta_{1}^+$ and $\zeta_{2}^-$.  Without loss of generality, we assume that $ \xi_{\ast}^-<\xi_{\ast}^+ $. Then $ \mn_3 $
  can be described as
     \begin{equation*}
    \begin{aligned}
     \mn_3&=\{0\leq y_1\leq  \xi_{\ast}^-, \zeta_{1}^-(y_1)\leq y_2\leq  \zeta_{2}^+(y_1)\}\cup \{\xi_{\ast}^-\leq y_1\leq \xi_{\ast}^+, \zeta_{2}^-(y_1)\leq y_2\leq  \zeta_{2}^+(y_1)\}\\
     &\quad \ \cup \{\xi_{\ast}^+\leq y_1\leq  \xi_{\ast}, \zeta_{2}^-(y_1)\leq y_2\leq  \zeta_{1}^+(y_1)\}.
     \end{aligned}
     \end{equation*}
     Denote $ \mn_3^\pm=\mn^\pm\cap \mn_3 $.  Given any point   $ \bm y=(y_1,y_2)\in  \mn_3^\pm $, let $ \varphi^\pm_{i}(s;y_1,y_2) $  $(i=1,2) $ be the characteristic curves associated with $ \lambda^\pm_i $  starting from point
    $(y_1,y_2) $, i.e., defined by
     \begin{equation}\label{4-51}
     \begin{cases}
     \frac{\de\varphi^\pm_{i}(s;y_1,y_2)}{\de s}={\lambda^\pm_i(s,\varphi^\pm_{i}(s;y_1,y_2))}, \quad {\rm{for}}\quad 0\leq s\leq y_1,\\
    \varphi^\pm_{i}(y_1;y_1,y_2)=y_2.
     \end{cases}
     \end{equation}
       \par For any point $ (y_1,y_2)\in \mn_3^+ $,  \eqref{4-51} implies that there exist unique $ \eta_{1}^+$ and $\xi_{cd}^+ $ such that the characteristic curve associated with  $ \lambda_1^+ $ starting from point $(y_1,y_2)$ has a unique intersection point
     $ (0,\eta_{1}^+) $ with  the entrance $ \Sigma_{0}^+ $  and the characteristic curve associated with $ \lambda_2^+ $ starting from point $(y_1,y_2)$  has a unique intersection point
     $ (\xi_{cd}^+,0) $ with the contact discontinuity $ y_2=0 $. Thus we can regard  $ \eta_{1}^+ $ and $\xi_{cd}^+ $ as functions of $ (y_1,y_2) $. Obviously, $ \xi_{cd}^+< y_1 $. Moreover, due to Rankine-Hugoniot
conditions on the contact discontinuity, there exist unique $\eta_{cd}^+$ and  $\eta_{2}^+$ such that the characteristic curve associated with $ \lambda_1^+ $ passing from point $ (\xi_{cd}^+,0) $ has a unique intersection point
     $ (0,\eta_{cd}^+) $ with  the entrance $ \Sigma_{0}^+ $ and the characteristic curve associated with $ \lambda_2^- $  passing from point $ (\xi_{cd}^+,0) $   has a unique intersection point  $ (0,\eta_{2}^+) $ with  the entrance $ \Sigma_{0}^- $ (See Fig 6).
     \par  Similarly, given any point $ (y_1,y_2)\in \mn_3^- $,  there exist unique $ \eta_{2}^- $ and $\xi_{cd}^- $ such that the characteristic curve corresponding to $ \lambda_2^- $ starting from point $(y_1,y_2)$ has a unique intersection point
     $ (0,\eta_{2}^-) $ with   the entrance $ \Sigma_{0}^- $ and the characteristic curve corresponding to $ \lambda_1^- $ starting from point $(y_1,y_2)$ has a unique intersection point
     $ (\xi_{cd}^-,0) $ with the contact discontinuity $ y_2=0 $. Hence $ \eta_{2}^- $ and $\xi_{cd}^- $ can be regarded as functions of $ (y_1,y_2) $. Obviously, $ \xi_{cd}^-< y_1 $. Moreover, due to Rankine-Hugoniot
conditions on the contact discontinuity, there exist unique $\eta_{cd}^- $ and  $\eta_{1}^- $ such that the characteristic curve corresponding to $ \lambda_2^- $  passing from point $ (\xi_{cd}^-,0) $ has a unique intersection point
     $ (0,\eta_{cd}^-) $ with the entrance $ \Sigma_{0}^- $  and the characteristic curve corresponding to $ \lambda_1^+ $  passing from point $ (\xi_{cd}^-,0) $  has a unique intersection point  $ (0,\eta_{1}^-) $ with  the entrance $ \Sigma_{0}^+ $ (See Fig 7).
      \begin{figure}

\begin{minipage}{0.44\textwidth}
  \centering
  % Requires \usepackage{graphicx}
  \includegraphics[width=6.5cm,height=4cm]{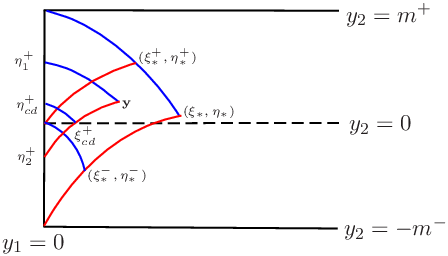}
  \caption{the  solution of  $({\mathbf{LP}})_3$ in $\mn_3^+$}
  \end{minipage}
 \begin{minipage}{0.44\textwidth}
  \centering
  % Requires \usepackage{graphicx}
  \includegraphics[width=6.5cm,height=4cm]{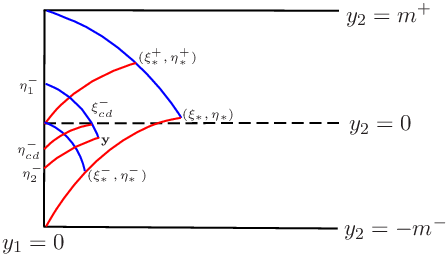}
  \caption{the  solution of  $({\mathbf{LP}})_3$ in $\mn_3^-$}
   \end{minipage}
   \end{figure}
   \par
    First, we show the existence of the solution. By the boundary conditions on $ \Sigma $, one derives
    \begin{equation}\label{4-52}
    \begin{aligned}
    & \e Z_2^+=\tau_1 \e Z_1^++\tau_3 \e Z_2^--\tau_4 \mq,\\
    & \e Z_1^-=\tau_2 \e Z_1^+-\tau_1 \e Z_2^--\tau_4 \mq,\\
    \end{aligned}
     \end{equation}
     where
     \begin{equation*}
    \tau_1=\frac{\beta-\alpha}{\beta+\alpha},\  \tau_2=\frac{2\alpha}{\beta+\alpha}, \ \tau_3=\frac{2\beta}{\beta+\alpha}, \ \tau_4=\frac{1}{\beta+\alpha}.
     \end{equation*}
     Then for any given   point $(y_1,y_2)\in \mn_3^+$, along the characteristic curve  $\varphi_{i}^+(s;y_1,y_2)$, it holds that
     \begin{equation}\label{4-53}
     \begin{cases}
\begin{aligned}
     \e Z_1^+(y_1,y_2)
    &= \hat  Z_{10}^+(\eta_{1}^+)+\int_{0}^{y_1}
    \left(\sum_{i=1}^2d_{1i}^+\e Z_i^++   F_1^+\right)(s,\varphi_{1}^+(s;y_1,y_2))\de s,\\
  \e Z_2^+(y_1,y_2)
    &=  \e Z_2^+(\xi_{cd}^+,0)+\int_{\xi_{cd}^+}^{y_1}
    \left(\sum_{i=1}^2d_{2i}^+\e Z_i^++  F_2^+\right)(s,\varphi_{2}^+(s;y_1,y_2))\de s\\
    &=\tau_1(\xi_{cd}^+) \e Z_1^+(\xi_{cd}^+,0)+\tau_3(\xi_{cd}^+) \e Z_2^-(\xi_{cd}^+,0)-\tau_4(\xi_{cd}^+) \mq\\
    &\quad+\int_{\xi_{cd}^+}^{y_1}
    \left(\sum_{i=1}^2d_{2i}^+\e Z_i^++  F_2^+\right)(s,\varphi_{2}^+(s;y_1,y_2))\de s\\
 &=\tau_1(\xi_{cd}^+)\left(  \hat Z_{10}^{+}(\eta_{cd}^+)+\int_{0}^{\xi_{cd}^+}
    \left(\sum_{i=1}^2d_{1i}^+\e Z_i^++    F_1^+\right)(s,\varphi_{1}^+(s;\xi_{cd}^+,0))\de s\right)\\
   &\quad+\tau_3(\xi_{cd}^+)\left(  \hat Z_{20}^-(\eta_{2}^+)+\int_{0}^{\xi_{cd}^+}
    \left(\sum_{i=1}^2d_{2i}^-\e Z_i^-+  F_2^-\right)(s,\varphi_{2}^-(s;\xi_{cd}^+,0))\de s\right)\\
   &\quad-\tau_4(\xi_{cd}^+)\mq++\int_{\xi_{cd}^+}^{y_1}
    \left(\sum_{i=1}^2d_{2i}^+\e Z_i^++  F_2^+\right)(s,\varphi_{2}^+(s;y_1,y_2))\de s.\\
   \end{aligned}
   \end{cases}
   \end{equation}
    Similarly, for any  point $(y_1,y_2)\in \mn_3^-$, along the characteristic curve  $\varphi_{i}^-(s;y_1,y_2)$, one gets
    \begin{equation}\label{4-54}
    \begin{cases}
  \begin{aligned}
  \e Z_2^-(y_1,y_2)
  &=\hat Z_{20}^-(\eta_{2}^-)+\int_{0}^{y_1}
    \left(\sum_{i=1}^2d_{2i}^-\e Z_i^-+  F_2^-\right)(s,\varphi_{2}^-(s;y_1,y_2))\de s,\\
     \e Z_1^-(y_1,y_2)
     &=Z_1^-(\xi_{cd}^-,0)
    +\int_{\xi_{cd}^-}^{y_1}
    \left(\sum_{i=1}^2d_{1i}^-\e Z_i^-+   F_1^-\right)(s,\varphi_{1}^-(s;y_1,y_2))\de s\\
  &=\tau_2(\xi_{cd}^-)\left(  \hat Z_{10}^+(\eta_{1}^-)+\int_{0}^{\xi_{cd}^-}
    \left(\sum_{i=1}^2d_{1i}^+\e Z_i^++  F_1^+\right)(s,\varphi_{1}^+(s;\xi_{cd}^-,0))\de s\right)\\
    &\quad-\tau_1(\xi_{cd}^-)\left(  \hat Z_{20}^-(\eta_{cd}^-)+\int_{0}^{\xi_{cd}^-}
    \left(\sum_{i=1}^2d_{2i}^-\e Z_i^-- F_2^-\right)(s,\varphi_{2}^-(s;\xi_{cd}^-,0))\de s\right)\\
    &\quad-\tau_4(\xi_{cd}^-) \mq++\int_{\xi_{cd}^-}^{y_1}
    \left(\sum_{i=1}^2d_{1i}^-\e Z_i^-+   F_1^-\right)(s,\varphi_{1}^-(s;y_1,y_2))\de s.\\
    \end{aligned}
    \end{cases}
   \end{equation}
   \par According to Picard iteration in \cite{LY85}, the integral equations \eqref{4-53} and \eqref{4-54} has a unique solution $ (\Z^+,\Z^-)\in (C^{1}(\overline{\mn_1^+\cup\mn_3^+})\times C^{1}(\overline{\mn_1^-\cup\mn_3^-})) $. Hence  boundary value problem \eqref{4-50} has a unique solution $ (\Z^+,\Z^-)\in (C^{1}(\overline{\mn_1^+\cup\mn_3^+})\times C^{1}(\overline{\mn_1^-\cup\mn_3^-})) $.
    Next,    we establish the  estimate for the solution.
       \begin{proposition}
       For $ (  \hat {\bm Z}^-, \hat {\bm Z}^+)\in \mj_\delta $,  there exists a  positive constant  $ \mc_7 $    depending only on the data  such that   the solution $( \Z^-,  \Z^+)$ to the problem $({\mathbf{LP}} )_3 $ satisfies
    \begin{equation}\label{4-55}
    \begin{aligned}
 &\| \Z^-\|_{1,\alpha;\overline{\mn_1^-}\cup\overline{ \mn_3^-}}+\| \Z^+\|_{1,\alpha;\overline{\mn_1^+}\cup\overline{ \mn_3^+}}\\
    &\leq \mc_7\left(\| \hat {\bm V}^-\|_{1,\alpha;[-m^-,0]}+\| \hat {\bm V}^+\|_{1,\alpha;[0,m^+]}+\sum_{I=\pm}\| {\bm F}^I\|_{1,\alpha;\overline{\mn^I}}\right)\\
       &\leq \mc_7(\sigma_{cd}+\delta^2),
       \end{aligned}
    \end{equation}
     where
     \begin{equation*}
 \hat {\bm V}^\pm=(\hat{\bm Z}_{0}^\pm,\hat B^\pm,\hat A^\pm)= \bm V_0^\pm -\bm V_b^\pm.
\end{equation*}
and $ \bm V_0^\pm, \bm V_b^\pm $  are defined in \eqref{3-29}.
    \end{proposition}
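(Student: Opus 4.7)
The $C^{1,\alpha}$ bound on $\mn_1^\pm$ is already contained in Proposition~4.2, so it suffices to extend the estimate to $\mn_3^\pm$ and add. I plan to follow the three-step template of Proposition~4.3, with transmission across the contact discontinuity $\Sigma$ replacing reflection off the nozzle walls. The starting point is the coupled integral representations \eqref{4-53}-\eqref{4-54}: at the foot $(\xi_{cd}^+,0)\in\Sigma$ of the $\lambda_2^+$-characteristic through a point $(y_1,y_2)\in\mn_3^+$, the transmission identities
\[
\e Z_2^+=\tau_1\e Z_1^++\tau_3\e Z_2^--\tau_4\mq,\qquad \e Z_1^-=\tau_2\e Z_1^+-\tau_1\e Z_2^--\tau_4\mq
\]
link the $(+)$-representation to both $\Z^+$ and $\Z^-$, and symmetrically for points in $\mn_3^-$. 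This coupling forces a simultaneous treatment of the two halves.

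I would introduce the joint running maxima
\[
\mz(s)=\sum_{I=\pm}\sum_{i=1}^2\sup_{\overline{\mn_3^I(s)}}|\e Z_i^I|,\quad D\mz(s),\quad [D\mz]_\alpha(s),
\]
defined over $\mn_3^I(s)=\mn_3^I\cap\{y_1\le s\}$ and $s\in[0,\xi_\ast]$, analogous to those in Proposition~4.3 but summed over the two sides. Step~1 ($C^0$ bound) substitutes the transmission formulas into \eqref{4-53}-\eqref{4-54}, uses Lemma~3.5 and \eqref{3-38} together with the bound $|\tau_i|+|\mq|\le C$ to control all coefficients, and closes via Gronwall. Step~2 ($C^1$ bound) differentiates \eqref{4-53}-\eqref{4-54} in $(y_1,y_2)$, expresses the first derivatives at the foot on $\Sigma$ via the transmission conditions plus the PDE (to recover tangential derivatives), and again closes by Gronwall. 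Both steps require uniform $C^1$ bounds on the transport maps $\varphi_i^\pm(\cdot;y_1,y_2)$ and on the foot functions $\eta_1^+,\eta_2^-,\xi_{cd}^\pm,\eta_{cd}^\pm$; I obtain these by differentiating the defining implicit relations such as $0=y_2-\int_{\xi_{cd}^+}^{y_1}\lambda_2^+(s,\varphi_2^+(s;y_1,y_2))\,\de s$ exactly as in Proposition~4.3, invoking Lemma~3.5.

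Step~3, the Hölder estimate on $D\Z$, is the main obstacle. For two points $\bm P,\bm Q\in\mn_3^+$, the $\lambda_2^+$-characteristics through them can land on $\Sigma$ at distinct feet $\xi_{cd}^+(\bm P)\ne\xi_{cd}^+(\bm Q)$, after which the representation involves a $\lambda_2^-$-characteristic into $\mn_3^-$ ending at $(0,\eta_2^+)$ and a $\lambda_1^+$-characteristic back from $(0,\eta_{cd}^+)$. The quantity $d^{-\alpha}(\bm P,\bm Q)|D\Z^+(\bm P)-D\Z^+(\bm Q)|$ then splits into (i)~boundary terms at $\Sigma_0^\pm$, controlled by $\|\hat{\bm Z}_0^\pm\|_{1,\alpha}$; (ii)~differences of line integrals over characteristics sharing an endpoint on $\Sigma$, which I would handle by the same three geometric sub-cases as in \eqref{4-43}-\eqref{4-45} and bound by $\int_0^{y_{1Q}}[D\mz]_\alpha(s)\,\de s$ plus a $C^1$ term; and (iii)~Hölder differences of $\tau_i\circ\xi_{cd}^+$ and $\mq\circ\xi_{cd}^+$ on $\Sigma$, absorbed using the $C^1$ regularity of $\xi_{cd}^+$ established in Step~2. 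The transmission back into $\mn_3^-$ is precisely what forces the joint definition of $\mz$: the integral along $\varphi_2^-(s;\xi_{cd}^+,0)$ contributes $[D\mz]_\alpha$ of the $\mn^-$ component, and vice versa. Combining the three steps yields $[D\mz]_\alpha(y_1)\le C\int_0^{y_1}[D\mz]_\alpha(s)\,\de s+C(\sigma_{cd}+\delta^2)$, and Gronwall gives the $C^{1,\alpha}$ bound on $\mn_3^\pm$; adding the $\mn_1^\pm$ estimate from Proposition~4.2 produces \eqref{4-55}.
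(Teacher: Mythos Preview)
Your proposal is correct and follows exactly the approach the paper intends: the paper omits the proof entirely, stating only that it is ``similar to the proof for Proposition 4.3,'' and your outline is precisely that adaptation---replacing wall reflection by the transmission identities \eqref{4-52} on $\Sigma$, treating both halves jointly because of the coupling, and running the same three-step $C^0$/$C^1$/H\"older Gronwall argument.
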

    The  detailed proof of this Proposition is  similar to the proof for Proposition 4.3, so we omit it here.
    \subsection{Proof of Theorem 4.1}
      \par   It follows from the estimates in Propositions 4.2, 4.3 and 4.4 that
        \begin{equation}\label{4-89}
    \begin{aligned}
     \sum_{I=\pm}\| \Z^I\|_{1,\alpha;\overline{\mn_1^I\cup \mn_2^I\cup \mn_3^I}}
    &\leq \mc_{1}^{\flat}\left(\| \hat {\bm V}^-\|_{1,\alpha;[-m^-,0]}+\| \hat {\bm V}^+\|_{1,\alpha;[0,m^+]}
    +\|(g^+)^\prime\|_{1,\alpha;[0,\xi_{\ast\ast}^+]}\right.\\
    &\quad\quad\quad \left.
    +\|(g^-)^\prime\|_{1,\alpha;[0,\xi_{\ast\ast}^-]}+\sum_{I=\pm}\| {\bm F}^I\|_{1,\alpha;\overline{\mn^I}}\right)\\
&\leq \mc_1^{\flat}(\sigma_{cd}+\delta^2),
\end{aligned}
        \end{equation}
        where  $ \mc_1^{\flat}>0 $ depends only on the data.
Let $ \xi_{\flat}^1=\min\{\xi_{\ast\ast}^-, \xi_{\ast\ast}^+\}  $  and $\tilde \mn_{\bm{1}}^{\pm}=\mn^{\pm}\cap\{0\leq y_1\leq \xi_{\flat}^1 \} $. Then we can follow the above proof to prove that \eqref{4-89} holds in $
\mn_4\cap\tilde\mn_{\bm{1}}^{\pm} $.  Then one has
  \begin{equation}\label{4-90}
    \begin{aligned}
     \sum_{I=\pm}\| \Z^I\|_{1,\alpha;\overline{\tilde\mn_{\bm{1}}^I}}
    &\leq \mc_{1}^{\flat}\left(\|\hat {\bm V}^-\|_{1,\alpha;[-m^-,0]}+\| \hat {\bm V}^+\|_{1,\alpha;[0,m^+]}
    +\|(g^+)^\prime\|_{1,\alpha;[0,\xi_{\ast\ast}^+]}\right.\\
    &\quad\quad\quad \left.
    +\|(g^-)^\prime\|_{1,\alpha;[0,\xi_{\ast\ast}^-]}+\sum_{I=\pm}\| {\bm F}^I\|_{1,\alpha;\overline{\mn^I}}\right)\\
&\leq \mc_1^{\flat}(\sigma_{cd}+\delta^2).
\end{aligned}
        \end{equation}
\par Let $ \xi_{\ast\ast\ast}^+ $  be the intersection point of the characteristic corresponding to $ \lambda_2^+ $ starting from $ (\xi_\flat^1,0) $ and the upper wall $ m^+ $. Let $ \xi_{\ast\ast\ast}^- $ be the intersection point of the characteristic corresponding to $ \lambda_1^- $ starting from $ (y_\flat^1,0) $ and the  lower wall $ -m^- $. Define $ \xi_\flat^{2}=\min\{\xi_{\ast\ast\ast}^+,\xi_{\ast\ast\ast}^-\} $ and $ \tilde\mn_{\bm{2}}^{\pm}=\mn^{\pm}\cap\{\xi_\flat^1\leq y_1\leq \xi_\flat^2\} $.  Then we can regard the line $ y_1=\xi_\flat^1 $ as the initial line $ y_1=0 $ and repeat the Step in the proof of Propositions 4.2 and 4.3 and 4.4 to obtain
\begin{equation}\label{4-91}
\begin{aligned}
 \sum_{I=\pm}\| \Z^I\|_{1,\alpha;\overline{\tilde\mn_{\bm{2}}^I}}
    &\leq \mc_{2}^{\flat}\left(\|\hat {\bm V}^-\|_{1,\alpha;[-m^-,0]}+\| \hat {\bm V}^+\|_{1,\alpha;[0,m^+]}
    +\|(g^+)^\prime\|_{1,\alpha;[0,\xi_{\ast\ast}^+]}\right.\\
    &\quad\quad\quad \left.
    +\|(g^-)^\prime\|_{1,\alpha;[0,\xi_{\ast\ast}^-]}+\sum_{I=\pm}\| {\bm F}^I\|_{1,\alpha;\overline{\mn^I}}\right)\\
    &\leq \mc_{2}^{\flat}\left(\|\hat {\bm V}^-\|_{1,\alpha;[-m^-,0]}+\| \hat {\bm V}^+\|_{1,\alpha;[0,m^+]}
    +\|(g^+)^\prime\|_{1,\alpha;[0,\xi_{\ast\ast\ast}^+]}\right.\\
    &\quad\quad\quad \left.
    +\|(g^-)^\prime\|_{1,\alpha;[0,\xi_{\ast\ast\ast}^-]}+\sum_{I=\pm}\| {\bm F}^I\|_{1,\alpha;\overline{\mn^I}}\right)\\
&\leq \mc_2^{\flat}(\sigma_{cd}+\delta^2),
    \end{aligned}
\end{equation}
where $ \mc_2^{\flat}>0 $ depends only on the data. Then  we can repeat this procedure  $ l $ times for the finite length $ L $  and define $ \xi_\flat^l $ and $ \tilde \mn_{\bm{l}}^{\pm} $ such that $ \mn^{\pm}=\bigcup_{1\leq k\leq l} \tilde \mn_{\bm{l}}^{\pm} $.   Summing all the estimates \eqref{4-90} together for $ k=1,\cdots, l $,  we finally obtain \eqref{4-1}.
\section{Solving the nonlinear value boundary problem $(\mathbf{P})$}\noindent
\par For given $ (\hat{ \bm Z}^-,\hat{ \bm Z}^-)\in \mj_\delta$,   the problem $({\mathbf{LP}} )$ has a unique solution  $( \Z^-,  \Z^+)$  satisfying
    \begin{equation}\label{5-1}
    \begin{aligned}
\sum_{I=\pm}\| \Z^I\|_{1,\alpha;\overline{\mn^I}}
    &\leq \mc_\flat\left(\| \bm V_{0}^--\bm V_{b}^-\|_{1,\alpha;[-m^-,0]}+\| \bm V_{0}^+-\bm V_{b}^+\|_{1,\alpha;[0,m^+]}\right.\\
    &\quad\quad\quad \left.+\|(g^+)^\prime\|_{1,\alpha;[0,L]}
    +\|(g^-)^\prime\|_{1,\alpha;[0,L]}+\sum_{I=\pm}\| {\bm F}^I\|_{1,\alpha;\overline{\mn^I}}\right)\\
&\leq \mc_\flat(\sigma_{cd}+\delta^2).
\end{aligned}
\end{equation}
Define a map
as follows
\begin{equation}\label{5-2}
\mt (\hat{  \bm Z}^-,\hat{ \bm Z}^+)=(\Z^-,\Z^+), \quad{\rm{ for \ each}} \ (\hat{  \bm Z}^-,\hat{ \bm Z}^+)\in \mj_\delta.
\end{equation}
If $ (\delta,\sigma_{cd}) $ satisfies
\begin{equation}\label{5-3}
\delta \leq \frac{1}{4}\min\left\{\frac{1}{\mc_\flat},\sigma_4\right\}, \quad {\rm{and}} \quad \sigma_{cd}\leq \frac{\delta}{4\mc_\flat},
\end{equation}
where $ \sigma_4 $ defined in Lemma 3.6. Then $ \mt $ maps $ \mj_\delta $ into itself. Observe that $  \mj_\delta $ is a compact convex set in $ C^{1,\alpha^\prime}(\overline{\mn^-})\times C^{1,\alpha^\prime}(\overline{\mn^+}) $ for $ 0<\alpha^\prime<\alpha $ and $ \mt $ is continuous. Indeed, the continuity of $ \mt $ can be proved by the argument for the estimate \eqref{5-1}. By the Schauder fixed point theorem, $ \mt $ has a fixed point in $  \mj_\delta$ provided that \eqref{5-3} holds.
\par Let $(  \hat{\bm Z}^-, \hat{\bm Z}^+)\in \mj_\delta$ be a fixed point of  $  \mj_\delta $. Then  $ (\bm Z^-,\bm Z^+):=
( \hat{\bm Z}^-, \hat{\bm Z}^+)+({\bm Z}_{b}^-,{\bm Z}_{b}^+) $ solves problem $ ({\mathbf{P}})$. Furthermore, it directly follows from  \eqref{5-1} and \eqref{5-3} that
 \begin{equation}\label{5-4}
  \sum_{I=\pm} \|\hat{\bm Z}^I\|_{1,\alpha;\overline{\mn^I}}
\leq \mc_\ast\left( \sum_{I=\pm} \|\hat{\bm Z}^I\|_{1,\alpha;\overline{\mn^I}}+\sigma_{cd}\right),
\end{equation}
where $ \mc_\ast $ depends only the data. Therefore, if $ (\delta,\sigma_{cd}) $ satisfies
\begin{equation}\label{5-5}
\delta \leq \frac{1}{4}\min\left\{\frac{1}{\mc_\ast},\frac{1}{\mc_\flat},\sigma_4\right\}, \quad {\rm{and}} \quad \sigma_{cd}\leq \frac{\delta}{4\mc_\flat},
\end{equation}
 the estimate \eqref{3-32} can be obtained from \eqref{5-4}.
\par To complete the proof of Theorem 3.5, it remains to prove the uniqueness of the solution to $\mathbf{Problem \ 3.4}$. Let $  (\bm Z_k^-,\bm Z_k^+)=( \hat{\bm Z}_k^-, \hat{\bm Z}_k^+)+({\bm Z}_{b}^-,{\bm Z}_{b}^+)$ $(k=1,2) $ be two solutions to problem $ ({\mathbf{P}})$, where $ \bm Z_k^\pm=(Z_{1k}^\pm,Z_{2k}^\pm) $. Set
\begin{equation*}
\bm \Phi^-= \hat{\bm Z}_1^--\hat{\bm Z}_2^-, \quad
\bm \Phi^+= \hat{\bm Z}_1^+-\hat{\bm Z}_2^+.
\end{equation*}
Then $(\bm \Phi^-,\bm \Phi^+) $ satisfies the following equation:
\begin{equation}\label{5-6}
     \begin{cases}
     \p_{y_1}\Phi^-+diag(\lambda_{11}^-,\lambda_{21}^-)\p_{y_2} \Phi^-\\
     =-diag(\lambda_{11}^--\lambda_{21}^-,\lambda_{21}^--\lambda_{22}^-)
     \p_{y_2} \hat{\bm Z}_2^-+
       {\bm H}_1^--{\bm H}_2^-,
    &\ {\rm in} \quad\mn^-,\\
     \p_{y_1}\Phi^++diag(\lambda_{11}^+,\lambda_{21}^+)\p_{y_2} \Phi^+\\
     =-diag(\lambda_{11}^+-\lambda_{21}^+,\lambda_{21}^+-\lambda_{22}^+)
     \p_{y_2} \hat{\bm Z}_2^++
       {\bm H}_1^+-{\bm H}_2^+,
    &\ {\rm in} \quad\mn^+,\\
    \bm \Phi^-= 0,&  \ {\rm on} \quad \Sigma_{0}^-,\\
    \bm \Phi^+= 0,&  \ {\rm on} \quad \Sigma_{0}^+,\\
      \Phi_1^-+ \Phi_2^-=0,&\ {\rm on} \quad \Sigma_w^-,\\
       \Phi_1^+ +\Phi_2^+=0,&\ {\rm on} \quad \Sigma_w^+,\\
       \Phi_1^-+ \Phi_2^-=\Phi_1^++ \Phi_2^+, &\ {\rm on} \quad \Sigma,\\
      (\alpha_1 \Phi_2^-+\beta_1  \Phi_1^+) -(\alpha_1  \Phi_1^-+\beta_1  \Phi_2^+)\\
      =( \alpha_2- \alpha_1)(\hat Z_{12}^--\hat Z_{22}^-)+( \beta_2- \beta_1)(\hat Z_{22}^+-\hat Z_{12}^+), &\ {\rm on} \quad \Sigma.\\
      \end{cases}
    \end{equation}
    Similar to the proof of Theorem 4.1, one has
    \begin{equation}\label{5-7}
    \sum_{I=\pm}\|\Phi^I\|_{0,\alpha;\overline{\mn^I}}
\leq \mc_{\sharp}\delta\sum_{I=\pm}\|\Phi^I\|_{0,\alpha;\overline{\mn^I}},
\end{equation}
where $ \mc_{\sharp} $ depends only on the data. Finally, we choose
\begin{equation}\label{5-8}
\delta = \frac{1}{4}\min\left\{\frac{1}{\mc_\sharp},\frac{1}{\mc_\ast},
\frac{1}{\mc_\flat},\sigma_4\right\}, \quad {\rm{and}} \quad \sigma_{3}= \frac{\delta}{4\mc_\flat}.
\end{equation}
Therefore, if $ \sigma_{cd}\leq \sigma_{3} $,  the inequality \eqref{5-7} implies $ \bm \Phi^{-}=\bm \Phi^{+}=0 $ so that $\mathbf{Problem \ 3.4}$ has a unique solution that satisfies \eqref{3-32}. This finishes the proof of Theorem 3.5.
\par {\bf Acknowledgement.} Weng is partially supported by National Natural Science Foundation of China  11971307, 12071359, 12221001.

\end{document}